%
%
\documentclass{amsart}
  \usepackage{amssymb}
\renewcommand{\qed}{}
\title{A new order theory of set systems and better quasi-orderings}


\author{Yohji AKAMA}
\address{Mathematical Institute, Tohoku University\\
Sendai Miyagi JAPAN, 980-8578}
\email{akama@m.tohoku.ac.jp}
\date{\today}

\begin{document}\newtheorem{theorem}{Theorem}
\newtheorem{definition}{Definition}
\newtheorem{example}{Example}
\newtheorem{fact}{Fact}
\newtheorem{condition}{Condition}
\newtheorem{corollary}{Corollary}
\newtheorem{lemma}{Lemma}

\newtheorem{q}{Conjecture}
\newtheorem{proposition}{Proposition}
\def\ae{\mathrel{\preceq^\forall_\exists}}

\def\closure#1{\mathrm{ss}\left({#1}\right)}

\def\emptyseq{\langle\,\rangle}

\def\ewprod{\mathrel{\widetilde{\times}}}
\def\ewunion{\mathrel{\widetilde{\cup}}}

\def\I{\mathcal{I}}
\def\inj{\mathsf{i}}
\def\inj{}
\def\injinv{\mathsf{i}^{-1}}
\def\injinv{}

\def\inhfin#1{\left[{#1}\right]^{<\omega}}

\def\finclass#1{{#1}^{<\omega}}
\def\Finclass#1{\left({#1}\,\right)^{<\omega}}

\def\l{\langle}
\def\L{\mathcal{L}}
\def\Langle{\left\langle}
\def\M{\mathcal{M}}

\def\op{\ensuremath{\mathfrak{O}}}  

\def\PF#1{\mathrm{PF}\left({#1}\right)}
\def\ppf#1{\left|\mathrm{PF}\left({#1}\right)\right|}
\def\Prd#1{\mathrm{Prod}(#1)}
\def\R{\right\rangle}
\def\SGL{\mathcal{S}\textit{ingl}\,}

\def\Zset{\mathbb{Z}}

\def\r{\rangle}

\def\ot{\mathrm{o.t.}}
\def\otp{\mathrm{otp}}
\def\power#1{P\left(#1\right)}

\def\Ram{\mathrm{Ra}}
\newcommand{\SetSys}{\ensuremath{\mathbb{SS}}}

\def\X{\mathcal{X}}

\def\Y{\mathcal{Y}}

\def\qo#1{\mathrm{qo}\left(#1\right)}



%
%
\begin{abstract}%
By reformulating a learning process of a set system $L$ as a game
 between Teacher~(presenter of data) and Learner~(updater of the abstract
 independent set of the data), we define the order type $\dim L$ of $L$ to be the
 order type of the game tree. The theory of this new order type and
 continuous, monotone function between set systems corresponds to the
 theory of well quasi-orderings (WQOs). As Nash-Williams developed the
 theory of WQOs to the theory of better quasi-orderings (BQOs), we
 introduce a set system that has order type and corresponds to a BQO. We
 prove that the class of set systems corresponding to BQOs is closed by
 any monotone function.  In (Shinohara and Arimura. ``Inductive
 inference of unbounded unions of pattern languages from positive
 data.'' {\em Theoretical Computer Science}, pp. 191--209, 2000), for
 any set system $L$, they considered the class of arbitrary (finite)
 unions of members of $L$. From viewpoint of WQOs and BQOs, we
 characterize the set systems $L$ such that the class of arbitrary
 (finite) unions of members of $L$ has order type. The characterization
 shows that the order structure of the set system $L$ with respect to
 the set-inclusion is not important for the resulting set system having
 order type.
 We point out continuous, monotone function  of set systems is similar to
 positive reduction to Jockusch-Owings' weakly semirecursive sets. 
Keyword: 
better elasticity, continuous deformation, powerset orderings,
 linearization, unbounded unions, wqo
\end{abstract}

\maketitle
\section{Introduction}

A \emph{set system} $\L$ over a set $T$, a subfamily of the powerset
$P(T)$, is a topic of (extremal)
combinatorics~\cite{MR866142}\cite{MR1931142}, as well as a target of an
algorithm to learn in computational learning theory~\cite{Lange2008194}.  

A \emph{well quasi-ordering}~\cite{MR0049867}~(\textsc{wqo} for short)
is, by definition, a quasi-ordering $(X,\, \preceq)$ which has neither an infinite
antichain nor an infinite descending chain. \textsc{Wqo}s are
employed in algebra~\cite{MR0049867},
combinatorics~\cite{MR0111704}\cite{MR2595703}, formal language
theory~\cite{MR1196555}\cite{dalessandro08:_well_quasi_order_in_formal_languag_theor}\cite{ehrenfeucht83:_regul_of_contex_free_languag}\cite{MR2083928},
and so on. 

\textsc{Wqo}s and related theorems such as Higman's
theorem~\cite{MR0049867}, K\"onig's lemma and Ramsey's
theorem~\cite{MR866142} are sometimes employed in computational learning
theory.  In
\cite{kanazawa98:_learn_class_of_categ_gramm}\cite{114871}\cite{93373},
sufficient conditions for set systems being learnable is studied with
K\"onig's lemma and Ramsey's theorem, In \cite{shinohara:inductive}, for
a set system $\L$, Shinohara-Arimura considered the \emph{unbounded
unions} of $\L$, that is, the class $\finclass{\L}$ of nonempty finite
unions of members of $\L$, and then they used Higman's theorem to study
a sufficient condition for it being learnable.  In
\cite{brecht09:_topol_and_algeb_aspec_of}, de Brecht employed
\textsc{wqo}s to calibrate \emph{mind change} complexity of unbounded
unions of restricted pattern languages. Motivated by
\cite{kanazawa98:_learn_class_of_categ_gramm}\cite{114871}\cite{93373},
a somehow systematic study on the relation between \textsc{wqo}s and a
class of learnable set systems is done in \cite{AkamaSetSystem}, as
follows:

(i) By reformulating a learning process of a set system $\L$ as a game
 between Teacher~(presenter of data) and Learner~(updater of abstract
 independent set), we define the order type $\dim \L$ of $\L$ to be the
 order type of the game tree, if the tree is well-founded. According to
 computational learning theory, if an indexed family $\L$ of recursive
 languages has well-defined $\dim \L$ then $\L$ is learnable by an
 algorithm from positive data.  If a set system has the well-defined
 order type, then we call it a \emph{finitely elastic set
 system}~(\textsc{fess} for short).  See Definition~\ref{def:dimfess}.
 \bigskip

(ii) For each \emph{quasi-ordering} $\X=(X,\, \preceq)$, we consider the
set system $\closure{\X}$ consisting of upper-closed subsets of $X$. The
set system has the order type equal to the \emph{maximal order
type}~\cite{MR0447056} of $\X$. Furthermore, the construction
$\closure{\bullet}$ has an left-inverse $\qo{\cdot}$. Here for a set
system $\L$, $\qo{\L}$ is a quasi-ordering $(\bigcup\L,\ \preceq_\L)$
such that
\begin{align*}
x\preceq_\L y \iff\forall L\in \L,\left(x\in L\implies y\in L\right) .
\end{align*}
The maximal order type $\otp(\X)$ of $\X$ is defined if and
only if $\X$ is a \textsc{wqo}. 
For any quasi-ordering $\X$, if one of
     $\otp(\X)$ and $\dim \closure{\X}$ is defined then the other side
	 is defined with the same ordinal number.
So
\textsc{fess}s correspond to \textsc{wqo}s.
\bigskip

(iii) 
For every nonempty set $U$, the product topological space $\{0,1\}^U$ of
the discrete topology $\{0,1\}$ is called a
\emph{Cantor space}. A subspace of a Cantor space is represented by $\L,
\M,\ldots$. We say a function from $\M$ to $\L$ is \emph{continuous}, if
it is continuous with respect to the subspaces $\M,\L$ of the Cantor spaces.
We identify $\{0,1\}^U$ with the powerset $P(U)$, and a
function from $\{0,1\}^U$ to $\{0,1\}^U$ with a function from $P(U)$ to
	 $P(U)$. We say $\op:\M\to\L$ is a \emph{deformation}, if it is monotone~(i.e. $M\subseteq M'$ implies $\op(M)\subseteq\op(M')$.)
If a deformation is continuous, then it has following finiteness condition:

\begin{lemma}\label{lem:11}Let $\op : \{0,1\}^{\bigcup\M} \to
\L$.
\begin{enumerate}
\item $\op$
is a deformation, if and only if there is a binary relation $R \subseteq  \left(\bigcup\L\right)
      \times P\left(\bigcup\M\right)$ such that
\begin{align}
&\forall M\in \M\, \forall x\in \bigcup\L\nonumber\\ 
&\ \ \bigl(\op( M ) \ni  x\; \Leftrightarrow\;  \exists v
 \subseteq M.\;  R(x, v)\bigr). \label{eq:mono}
\end{align}

\item
 $\op$ is a \emph{continuous} deformation, if and only if there exists $R
      \subseteq \left(\bigcup\L\right) \times P\left(\bigcup\M\right)$
      such that \eqref{eq:mono} holds, but $v$ is a \emph{finite} set
      whenever $R(x,v)$ holds, and there are only \emph{finitely} many
      such $v$'s for each $x$.  (\cite{AkamaSetSystem})
\end{enumerate} 
For each binary relation $R\subseteq\left(\bigcup \L\right) \times
 P(\bigcup \M)$, the function $\op$ satisfying \eqref{eq:mono} is
 unique.  So we write it by $\op_R$. Conversely, every deformation
 $\op:\{0,1\}^{\bigcup\M}\to\L$ is written as $\op_R$ by a binary relation
\begin{align*}
R:=\bigl\{(x,v)\in \left(\bigcup\L\right)\times P\left(\bigcup\M\right)\;;\;
 \op(v)(x)=1\bigr\}.
\end{align*}
\end{lemma}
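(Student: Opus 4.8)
The plan is to prove the three claims of Lemma~\ref{lem:11} in sequence, treating the forward and backward directions of each biconditional separately, and then establishing the uniqueness and the explicit formula for $R$ at the end.

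First I would prove part~(1). For the ``if'' direction, suppose $R$ satisfies \eqref{eq:mono} and show $\op_R$ is monotone. If $M\subseteq M'$ and $x\in\op(M)$, then by \eqref{eq:mono} there is $v\subseteq M\subseteq M'$ with $R(x,v)$, so again by \eqref{eq:mono} we get $x\in\op(M')$; hence $\op(M)\subseteq\op(M')$. For the ``only if'' direction, given a deformation $\op$, the natural candidate is exactly the relation displayed at the end of the lemma, namely $R=\{(x,v):\op(v)(x)=1\}$, where I read $\op(v)(x)=1$ as $x\in\op(v)$ under the identification of $\{0,1\}^U$ with $P(U)$. I would then verify \eqref{eq:mono} for this $R$: if $x\in\op(M)$ then taking $v=M$ gives $R(x,M)$ with $M\subseteq M$; conversely if $R(x,v)$ for some $v\subseteq M$, then $x\in\op(v)$ and monotonicity of $\op$ applied to $v\subseteq M$ yields $x\in\op(M)$. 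This simultaneously proves the ``only if'' direction and the last sentence of the lemma about every deformation being of the form $\op_R$.

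Next I would prove part~(2). The extra content beyond part~(1) is the finiteness condition corresponding to continuity. For the ``if'' direction, assume $R$ satisfies \eqref{eq:mono} with each witnessing $v$ finite and only finitely many $v$ for each $x$; I would show $\op_R$ is continuous by checking that the preimage of a subbasic open set is open, i.e. that membership of $x$ in $\op(M)$ is determined by $M$ restricted to a finite subset of $\bigcup\M$. Concretely, the finitely many finite witnesses $v_1,\dots,v_k$ for a given $x$ have finite union $F_x$, and whether $x\in\op(M)$ depends only on $M\cap F_x$; this is precisely the statement that $\op^{-1}$ of the basic clopen set ``$x\in{}$'' is clopen in the subspace topology, giving continuity. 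For the ``only if'' direction, assume $\op$ is a continuous deformation. Using the relation $R$ from part~(1), I must upgrade it to one whose witnesses are finite and finitely many. Here continuity says that for each $x$ with $x\in\op(M)$, there is a finite $F\subseteq M$ such that $x\in\op(M')$ whenever $M'\cap F=M\cap F$; combined with monotonicity this lets me replace each witness by a finite sub-witness and, by a compactness/finiteness argument, reduce to finitely many minimal finite witnesses per $x$.

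The main obstacle I expect is the ``only if'' direction of part~(2): extracting from topological continuity a relation $R$ that is simultaneously finite-valued and finitely-many-valued per $x$. The subtlety is that the naive relation from part~(1) uses arbitrary $v\subseteq\bigcup\M$, so I must genuinely exploit the subspace topology — a point of $\op^{-1}(\{x\in\bullet\})$ has a basic clopen neighborhood fixing finitely many coordinates, and monotonicity forces the relevant neighborhood to be an ``up-set'' determined by a finite set of coordinates forced to be $1$. Turning the open cover of $\op^{-1}(\{x\in\bullet\})$ into \emph{finitely many} witnesses requires care, since a priori the cover could be infinite; I would handle this by taking $R(x,v)$ to hold exactly for the inclusion-minimal finite sets $v$ with $x\in\op(v)$, and argue these are finite in number using that $\M$ sits inside a Cantor space together with the fact that an infinite antichain of finite sets would contradict the clopen (hence compact-complemented) structure of $\op^{-1}(\{x\in\bullet\})$. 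Since this part is cited to \cite{AkamaSetSystem}, I would present the argument at the level of these key steps rather than grinding through the point-set topology in full detail.
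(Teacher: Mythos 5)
The paper never proves Lemma~\ref{lem:11} in the text; both parts are imported from \cite{AkamaSetSystem}, so there is no internal proof to compare against. Judged on its own, your proposal is correct and is the standard argument: part~(1) and the closing uniqueness/representation claims all follow from the canonical relation $R=\{(x,v):x\in\op(v)\}$ together with monotonicity (and uniqueness is immediate because \eqref{eq:mono} determines $\op(M)$ pointwise from $R$), and the ``if'' direction of part~(2) is exactly the observation that membership of $x$ in $\op(M)$ depends only on $M\cap F_x$ for a finite $F_x$, so preimages of subbasic clopen sets are clopen.

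The one step you should write out more carefully is the finiteness of the family of minimal witnesses in the ``only if'' direction of (2). As phrased, ``an infinite antichain of finite sets would contradict the clopen structure'' is not a fact about antichains by themselves; the mechanism is compactness of $U_x:=\op^{-1}\left(\{A : x\in A\}\right)$. Both $\{A\in\L : x\in A\}$ and $\{A\in\L : x\notin A\}$ are open in the subspace $\L$, so $U_x$ is clopen in the full Cantor space $\{0,1\}^{\bigcup\M}$ and hence compact. Openness of $U_x$ plus monotonicity gives, for each $M\in U_x$, a finite $v\subseteq M$ with $v\in U_x$ (fix a basic neighborhood of $M$ inside $U_x$ determined by a finite coordinate set $F$ and take $v=M\cap F$); every such $v$ contains an inclusion-minimal finite witness, so the clopen up-sets of the minimal witnesses cover $U_x$; a finite subcover exists, and since each minimal witness itself belongs to $U_x$ and the minimal witnesses form an antichain, every minimal witness coincides with one of the finitely many in the subcover. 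That closes the argument you gesture at. A last cosmetic point: \eqref{eq:mono} as printed quantifies only over $M\in\M$, whereas monotonicity and continuity concern all of $P\left(\bigcup\M\right)$; your proof (correctly) uses the version quantified over arbitrary subsets, which is what the canonical $R$ in fact satisfies, and a careful write-up should say so.
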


The class of \textsc{wqo}s is closed under finitary operations such
	 as Higman embedding~\cite{MR0049867} and  topological minor relation~\cite[Sect.~1.7]{MR2744811} between finite
	 trees~\cite[Ch.~12]{MR2744811}\cite{MR0111704}. The class of finite graphs is a
	 \textsc{wqo} under the \emph{minor relation}. 
	 Robertson-Seymour's proof of it is given in the numbers IV-VII,
	 IX-XII and XIV-XX of their series of over 20 papers under the
	 common title of \emph{Graph Minors}, which has been appearing
	 in the \emph{Journal of Combinatorial Theory, Series B}, since
	 1983. For a shorter proof, see
 recent papers by Kawarabayashi and
	 his coauthors.

The class of  \textsc{fess}s enjoys a useful closure condition:

\begin{proposition}[\cite{AkamaSetSystem}]\label{prop:u}
For any set systems $\L$ and $\M$ and any continuous deformation
 $\op:\inj \{0,1\}^{\bigcup\M}\to \inj \L$, if $\M$ is an \textsc{fess},
 so is the image $\injinv \op\left[\inj \M\right]$ of $\M$ by $\op$.
\end{proposition}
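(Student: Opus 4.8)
The plan is to argue by contraposition, using the combinatorial content of Definition~\ref{def:dimfess}: a set system is an \textsc{fess} exactly when the Teacher--Learner game tree is well-founded, i.e.\ when it is \emph{finitely elastic}. So I would assume $\M$ is an \textsc{fess} but $\op\left[\M\right]$ is \emph{not}, and then manufacture an infinite-elasticity witness (an infinite branch of the game tree) inside $\M$ to reach a contradiction. Concretely, the failure of finite elasticity for $\op\left[\M\right]$ supplies points $x_0,x_1,\dots\in\bigcup\op\left[\M\right]\subseteq\bigcup\L$ and members $M_1,M_2,\dots\in\M$ such that for every $n\ge 1$ one has $\{x_0,\dots,x_{n-1}\}\subseteq\op(M_n)$ while $x_n\notin\op(M_n)$.

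First I would translate these membership facts through $\op$ by Lemma~\ref{lem:11}(2). Since $\op$ is a \emph{continuous} deformation it equals $\op_R$ for a relation $R\subseteq\left(\bigcup\L\right)\times P\left(\bigcup\M\right)$ in which, for each $x$, the family $V_x:=\{v : R(x,v)\}$ is a \emph{finite} set of \emph{finite} subsets of $\bigcup\M$. Then for $i<n$ the fact $x_i\in\op(M_n)$ unpacks to: some $v\in V_{x_i}$ satisfies $v\subseteq M_n$; and $x_n\notin\op(M_n)$ unpacks to: \emph{every} $v\in V_{x_n}$ has $v\not\subseteq M_n$.

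The heart of the argument---and the step that genuinely uses continuity rather than mere monotonicity---is to turn the $n$-dependent choices of witness $v$ into one choice per index, by an iterated pigeonhole of K\"onig's-lemma type. Starting from $S_{-1}=\omega$, given an infinite $S_{k-1}$ I set $i_k=\min S_{k-1}$ and observe that for each of the infinitely many $n\in S_{k-1}$ with $n>i_k$ there is some $v\in V_{x_{i_k}}$ with $v\subseteq M_n$; as $V_{x_{i_k}}$ is \emph{finite}, one value $v_k\in V_{x_{i_k}}$ works for an infinite subset $S_k\subseteq S_{k-1}$ (so $v_k\subseteq M_n$ for all $n\in S_k$). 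This yields indices $i_0<i_1<\cdots$ and finite sets $v_k\subseteq\bigcup\M$ with $R(x_{i_k},v_k)$ such that $v_k\subseteq M_{i_\ell}$ whenever $k<\ell$, while $v_k\not\subseteq M_{i_k}$, the latter because $x_{i_k}\notin\op(M_{i_k})$.

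Finally I would read off the contradiction. Choose $y_k\in v_k\setminus M_{i_k}$, which is nonempty by the previous line; then $y_k\in\bigcup\M$, and for $k<\ell$ we get $y_k\in v_k\subseteq M_{i_\ell}$, whereas $y_k\notin M_{i_k}$. Hence $\{y_0,\dots,y_{\ell-1}\}\subseteq M_{i_\ell}\not\ni y_\ell$ for every $\ell$, i.e.\ the $y_k$ together with the members $M_{i_0},M_{i_1},\dots\in\M$ form an infinite-elasticity witness for $\M$, contradicting that $\M$ is an \textsc{fess}. I expect the only delicate point to be the extraction in the third paragraph: it succeeds precisely because each $V_{x_i}$ is finite, which is exactly the finiteness guaranteed by continuity in Lemma~\ref{lem:11}(2); for a merely monotone $\op$ the witness family could be infinite and the pigeonhole would fail.

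\bigskip

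\noindent\textbf{Remark.} One can instead aim for the quantitative statement $\dim\op\left[\M\right]\le f(\dim\M)$ by ranking the two game trees, but the contrapositive above is cleaner for establishing mere well-definedness of $\dim\op\left[\M\right]$.
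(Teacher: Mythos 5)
Your argument is correct. Note that the paper states Proposition~\ref{prop:u} without proof (it is imported from \cite{AkamaSetSystem}), so there is no in-paper argument to compare against; your contrapositive — pull an infinite bad learning sequence of $\op\left[\M\right]$ back through the relation $R$ of Lemma~\ref{lem:11}(2), use the finiteness of each witness family $V_x$ to stabilize a single witness $v_k$ per index by a K\"onig-type thinning, and then pick $y_k\in v_k\setminus M_{i_k}$ to produce an infinite bad learning sequence in $\M$ — is the natural proof and correctly isolates where continuity (as opposed to mere monotonicity) is used. The only blemish is an off-by-one: start the thinning from $S_{-1}=\{1,2,\ldots\}$ rather than $\omega$, since the exclusion $x_n\notin\op(M_n)$ is only available for $n\ge 1$ and you need it to guarantee $v_k\not\subseteq M_{i_k}$ (equivalently, that $v_k\setminus M_{i_k}$ is nonempty) for every $k$.
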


By it, we prove that for  various  (nondeterministic) language
operators~(e.g. Kleene-closure, shuffle-product~\cite{MR1231799}\cite{MR2567276}, shuffle-closure~\cite{MR2078698}, (iterated) literal
shuffle~\cite{berard87:_liter_shuff}, union, product,
intersection), the elementwise application of such operator to (an)
\textsc{fess}(s) induces an \textsc{fess}. 
 
Roughly speaking, a deformation transforms any quasi-ordering $\preceq$
to the \emph{powerset ordering}~\cite{MR1884426} $\ae$. It is through
our correspondence $(\closure{\bullet},\ \qo{\cdot})$ between
quasi-orderings to set systems~(Section~\ref{sec:contbqo}).  Although
the powerset ordering of Rado's \textsc{wqo}~\cite{MR0066441} is not a
\textsc{wqo}~\cite[Corollary~12]{MR1884426}, the class of \emph{better
quasi-orderings}~\cite{MR0221949}~(\textsc{bqo}s for short) is closed
with respect to the powerset ordering. There are \emph{infinitary}
operations under which the class of \textsc{wqo}s is not closed but the
class of \textsc{bqo}s is.  So we introduce a \emph{better elastic set
system}~(\textsc{bess} for short), as a set system corresponding to a
\textsc{bqo}.  We show that the class of \textsc{bess}s is closed under
the image of any deformation~(Section~\ref{sec:contbqo}), where any
deformations are infinitary in a sense of Lemma~\ref{lem:11}. By this
and (i), we can develop the computational learning theory of
$\omega$-languages~\cite{267871}.

The notion of \textsc{bess} is useful in investigating a following set system
\begin{align}
\finclass{\L}:=\{ \bigcup \M\;;\; \emptyset\ne\M\subseteq \L,\ \#\M<\infty\}. \label{fess:unbounded}
\end{align}
It is studied for the learnability of language classes such as a class
of regular pattern languages~\cite{shinohara:inductive}. We characterize
the set systems $\L$ such that $\finclass{\L}$ are again \textsc{fess}s,
and then we prove that for every \textsc{bess} $\L$, $\finclass{\L}$ is
an \textsc{fess}.

We remark that another importance of set system $\finclass{\L}$.
 We conjecture that the order type of an \textsc{fess} is the
supremum, actually the maximum, of the order types of the ``linearizations'' of the \textsc{fess}.
This conjecture corresponds to a proposition useful in investigating
       \textsc{wqo}s:
\begin{proposition}[\protect{\cite{MR0447056}}]
The order
       type of a \textsc{wqo} is the maximum order type of the
       linearizations of the \textsc{wqo}.
\end{proposition}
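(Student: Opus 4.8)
The statement asserts that the supremum defining the maximal order type $\otp(\X)$ of a \textsc{wqo} $\X=(X,\preceq)$ is \emph{attained}. Recall that a \emph{linearization} is a linear order $\sqsubseteq\,\supseteq\,\preceq$ on $X$, and $\otp(\X)$ is the supremum of the ordinals $\otp(X,\sqsubseteq)$ over all linearizations. First I would note that each such $\sqsubseteq$ really is a well-order: by the standard characterization of \textsc{wqo}s, every infinite sequence in $X$ has an increasing pair $x_i\preceq x_j$ ($i<j$), which would contradict $x_i\sqsupset x_j$, so $\sqsubseteq$ has no infinite descending chain. Hence the relevant order types form a set (bounded by $|X|^+$) and $\otp(\X):=\delta$ is a genuine ordinal. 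Since $\otp(X,\sqsubseteq)\le\delta$ is immediate, the whole content is that this supremum is a \emph{maximum}.

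The engine is the deletion recursion. The minimal elements of a nonempty $\X$ form an antichain, hence a finite set, and the $\sqsubseteq$-least element of any linearization is $\preceq$-minimal; reading a linearization as its least element prepended to a linearization of the remainder gives
\[
  \otp(\X)\;=\;1+\max_{e\ \text{minimal}}\otp\bigl(\X\setminus\{e\}\bigr),
\]
the maximum being realized because there are only finitely many minimal $e$. I would stress the difficulty this already reveals: for infinite $\X$ the leading $1+{}$ absorbs the increment, so deleting a single element typically leaves $\otp(\X)$ unchanged, and a naive transfinite induction on $\delta$ cannot proceed one element at a time.

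I would therefore build an extremal linearization directly, by a transfinite greedy process governed by the invariant
\[
  \xi+\otp(R_\xi)\;=\;\delta ,
\]
where $R_\xi$ is the set of not-yet-placed elements. At stage $\xi$ I place a $\preceq$-minimal $e_\xi\in R_\xi$ chosen so that $1+\otp(R_{\xi+1})=\otp(R_\xi)$, which the recursion permits. The placed elements form an increasing $\sqsubseteq$-chain that is a genuine linear extension, since $e_\xi$ minimal in $R_\xi$ means nothing still unplaced lies $\preceq$-below it. The base case is the definition of $\delta$, and the successor step is pure ordinal arithmetic: left-cancelling $\xi$ turns the invariant at $\xi+1$ into exactly the defining relation $1+\otp(R_{\xi+1})=\otp(R_\xi)$. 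If the invariant survives to the stage $\theta$ with $R_\theta=\emptyset$, then $\theta=\theta+0=\delta$, so the constructed order is a linearization of type $\delta$, finishing the proof.

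The one genuine obstacle is the \emph{limit stages}, and this is where I expect the real work to concentrate. At a limit $\lambda$ the set $R_\lambda=\bigcap_{\eta<\lambda}R_\eta$ is forced, with no choice left. The placed set $P_\lambda:=\{e_\eta:\eta<\lambda\}$ is a down-set with complementary up-set $R_\lambda$, and extending the already-built initial segment (of type exactly $\lambda$) by linearizations of $R_\lambda$ of arbitrarily large type yields the easy inequality $\lambda+\otp(R_\lambda)\le\delta$; the reverse inequality is what can fail for a badly steered run. For example, in $\X=\omega\sqcup\omega$, where $\delta=\omega\cdot 2$, a greedy run that alternates between the two chains—each step being locally optimal—exhausts $X$ after $\omega$ steps, leaving $R_\omega=\emptyset$ and a linearization of type only $\omega$. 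Thus the successor rule has ties, and some tie-breakings collapse the tail at limits. The crux is to fix a \emph{global} steering policy guaranteeing $\otp(R_\lambda)=\delta\ominus\lambda$ (the unique $\rho$ with $\lambda+\rho=\delta$) at every limit—equivalently, that each placed initial segment is a maximal linearization of an ``$\otp$-additive'' down-set, so that no order type is wasted across the limit. It is precisely here that the \textsc{wqo} hypothesis must be spent: finiteness of antichains bounds how the unplaced tail can fragment at a limit, and this is what one leverages to rule out the collapse exhibited above and carry the invariant through.
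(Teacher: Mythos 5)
The paper offers no proof of this proposition---it is quoted from de Jongh and Parikh \cite{MR0447056}---so there is no internal argument to compare against; judged on its own terms, your proposal has a genuine gap, and it is exactly the one you flag yourself. The setup is sound: linearizations of a \textsc{wqo} are well-orders, the deletion recursion $\otp(\X)=1+\max_{e}\otp(\X\setminus\{e\})$ over the finitely many minimal $e$ is correct, and the successor step of the invariant $\xi+\otp(R_\xi)=\delta$ does follow by left cancellation. But your own example $\omega\sqcup\omega$ shows that a run which is optimal at every successor stage can still violate the invariant at the first limit, so the theorem cannot follow from the successor rule alone. The missing ingredient is precisely the ``global steering policy'': you assert that the \textsc{wqo} hypothesis ``must be spent'' to guarantee $\otp(R_\lambda)$ equals the unique $\rho$ with $\lambda+\rho=\delta$ at every limit $\lambda$, but you neither define such a policy nor prove that one exists. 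As written, the argument establishes only the trivial inequality $\otp(Y)\le\delta$ for every linearization $Y$; the attainment claim, which is the entire content, is left as an announced intention.

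Closing that gap is essentially the whole of de Jongh and Parikh's proof, and their route is structurally different: it is a transfinite induction on $\delta=\otp(\X)$ with a case split on the additive decomposability of $\delta$, not a greedy construction of length $\delta$. In the decomposable case $\delta=\alpha+\beta$ one exhibits a partition of $\X$ into a down-set $D$ and its complementary up-set $U$ with $\otp(D)\ge\alpha$ and $\otp(U)\ge\beta$, applies the induction hypothesis to each part, and concatenates the resulting linearizations; the indecomposable case uses the recursion over $\{y\;;\;y\not\succeq x\}$ together with the bound $\otp(A\cup B)\le\otp(A)\oplus\otp(B)$ (natural sum) to show the supremum is attained. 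If you insist on the greedy scheme, you would have to prove that $e_\xi$ can always be chosen so that the placed initial segment remains a maximal-type linearization of a down-set and that this property survives limits---which, once made precise, is the same decomposition lemma in disguise. I recommend restructuring the argument as an induction on $\delta$ rather than a construction of length $\delta$, or simply citing \cite{MR0447056} as the paper does.
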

A ``linearization'' of an \textsc{fess} $\L$ seems to
be a subfamily of $\finclass{\L}$.

We hope that our study on \textsc{fess}s and \textsc{bess}s are useful
in solving problems~(e.g. decision problem of timed
Petri-nets~\cite{MR1946092}\cite{MR2050792}, the multiplicative exponential
linear logic~\cite{Groote:2004:VAT:1018438.1021843}) which are related
to \textsc{wqo}s and \textsc{bqo}s but hard to solve with conventional
arguments for \textsc{wqo}s and \textsc{bqo}s.

The rest of paper is organized as follows:
In the next section, 
we recall the powerset ordering and  Marcone's characterization~\cite{MR1884426} of
\textsc{wqo}s such that the powerset orderings are again \textsc{wqo}s.
We also review the combinatorial definition of a
\textsc{bqo}. Then we recall that the class of \textsc{bqo}s is closed with respect
to the powerset ordering.
In Section~\ref{sec:contbqo}, for the set system $\closure{\preceq}$ of
upper-closed sets of a fixed quasi-ordering $\preceq$, the image by a
deformation is essentially the set system of upper-closed sets of the
powerset ordering of $\preceq$. Then we introduce a \textsc{bess} as  an
\textsc{fess} that corresponds to a \textsc{bqo}. Then we prove that the
image of a \textsc{bess} by any deformation is an \textsc{fess}.
In Section~\ref{sec:linearization}, we characterize the class of set
systems $\L$ such that $\finclass{\L}$ is an \textsc{fess}, from
viewpoint of \textsc{bqo} theory.  We contrast our characterization
with Shinohara-Arimura's sufficient condition~\cite{shinohara:inductive}
for a set system $\L$ to have an \textsc{fess} $\finclass{\L}$.

In appendix, we propose to extend Ramsey numbers to estimate the ordinal
order type of set systems, and then present miscellaneous results for
computable analogue of (iii). Finally, we review the order type of a set
system from \cite{AkamaSetSystem}.

\section{Better quasi-orderings and powerset ordering}\label{sec:bqo}

A better quasi-ordering~(\textsc{bqo} for short), a stronger concept than a
\textsc{wqo}, has pleasing closure properties with respect to
\begin{itemize}
\item
embedding for transfinite sequences~\cite{MR0221949};
\item topological minor relation for infinite
trees 

\cite{MR1816801}\cite{MR0175814}; and

\item  a \emph{powerset ordering} $\ae$ (see Definition~\ref{def:ae})
(\cite[Corollary~10]{MR1884426}).
\end{itemize}

\subsection{Combinatorial definition of BQOs}
We first recall the definition of \textsc{bqo}s by
\emph{barriers}~\cite{MR1884426} and then the closure of \textsc{bqo}s
with respect to the powerset ordering $\ae$.
Please be advised to refer \cite{MR1219735}\cite{MR1884426}
 for the detail. 

Hereafter the first infinite ordinal $\omega$ is identified with the set
of nonnegative integers.  The class of subsets $X$ of $U$ such that the
cardinality of $X$ is less than $\alpha$ (equal to $\alpha$, resp.) is
denoted by $[U]^{<\alpha}$ ($[U]^{\alpha}$, resp.). A set
$X\subseteq\omega$ is often identified with the sequence enumerating it
in a strictly increasing order.
\begin{definition}
\begin{enumerate}
\item
We say $B\subseteq \inhfin{\omega}$ a \emph{barrier}, if 
(1) $\bigcup B$ is
 infinite; (2) for all $\sigma \in [\bigcup B]^\omega$ there exists $s\in B$
 such that $s$ is a prefix of $\sigma$; and
(3) for all $s,t\in B$, $s\not\subseteq t$.

\item For $s,t\in \inhfin{\omega}$, we write $s \triangleleft t$, if,
 the sequence $s$ is a prefix of the sequence
$u=s\cup t$ and the sequence $t$ is a prefix of $u\setminus\{\min u\}$.

\item Let $\ot(B)$ the maximal order type of $B$ with respect to the
     lexicographical ordering.
\end{enumerate}
\end{definition}

Observe that 
\begin{align}
\SGL:=\{\ \{n\}\ ;\ n\in \omega\}\label{sgl}
\end{align}
is a barrier. Any barrier $B$ of $\ot(B)$ being $\omega$ consists only
of singletons, according to \cite[p.~342]{MR1884426}.

We recall an $\alpha$-\textsc{wqo} and a
\textsc{bqo}~\cite[Definition~3]{MR1884426}.

\begin{definition}\label{def3}
Let $\alpha$ be a countable ordinal and $\preceq$ a quasi-ordering on
 $Q$. We say a function $f:B\to Q$ is \emph{good} with respect
 to $\preceq$, if there are some $s,t\in B$ such that $s\triangleleft t$
 and $f(s)\preceq f(t)$. Otherwise we say $f$ is \emph{bad}.
We say $\preceq$ an $\alpha$-\textsc{wqo}, if for every barrier
 with $\ot(B)\le\alpha$  every function $f:B\to Q$ is good with
 respect to $\preceq$. If $\preceq$ is an
 $\alpha$-\textsc{wqo} for all countable ordinal $\alpha$, we call
 $\preceq$ a \textsc{bqo}.\end{definition}

Because $\SGL$ is a barrier, every \textsc{bqo} is a \textsc{wqo}. When
we define an $\alpha$-\textsc{wqo} for
 a countable ordinal $\alpha$, we have only to consider
 only \emph{smooth barriers} among the barriers,
as  \cite{MR1884426} explains:

\begin{definition}
By a \emph{smooth} barrier, we mean a barrier $B$ such that for all
 $s,t\in B$ with $\#s<\#t$  there exists $i\le \#s$ such that the $i$-th
 smallest element of $s$ is less than that of $t$.
\end{definition}

By an \emph{indecomposable
ordinal}, we mean $\omega^\beta$ such that $\beta$ is any ordinal. Recall \cite[Corollary~3.5]{MR1219735}. 
\begin{proposition}\label{cor3.5}
If $B$ is a smooth barrier, then the ordinal $\ot(B)$ is indecomposable.\end{proposition}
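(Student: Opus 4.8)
The plan is to prove that $\ot(B)=\omega^{\gamma}$ for some $\gamma$ by combining the standard indecomposability criterion
\[
\alpha \text{ is indecomposable} \iff \beta+\alpha=\alpha \text{ for every } \beta<\alpha
\]
with the recursive block structure of a barrier, running an induction on the ordinal $\ot(B)$ itself. First I would record two preliminaries: since $B$ is a barrier, $(B,<_{\mathrm{lex}})$ is a well-ordering, so $\ot(B)$ is a genuine ordinal; and because $\bigcup B$ is infinite the set of least elements $\{\min s\;;\;s\in B\}$ is unbounded (an infinite set of large integers would otherwise have no prefix in $B$, violating barrier condition~(2)), so in particular $\ot(B)\ge\omega$.

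The core is to decompose $B$ according to the first entry of its sequences. For each first-element $n$ let $B^{(n)}:=\{s\in\inhfin{\omega}\;;\;\langle n\rangle^{\frown}s\in B\}$, obtained by deleting the leading $n$. Then in the lexicographic order the block of elements of $B$ with minimum $n$ precedes the block with minimum $n'$ whenever $n<n'$, and each block is order-isomorphic to $B^{(n)}$ under $<_{\mathrm{lex}}$. Since the first-elements $n_0<n_1<n_2<\cdots$ form an infinite set, this gives
\[
\ot(B)=\sum_{k<\omega}\ot\!\left(B^{(n_k)}\right).
\]
Each $B^{(n_k)}$ is again a barrier (on $\{n_k+1,n_k+2,\dots\}$), and it is \emph{smooth}: if $u,v\in B^{(n_k)}$ with $\#u<\#v$, then $\langle n_k\rangle^{\frown}u,\langle n_k\rangle^{\frown}v\in B$ have the same leading entry, so the witness $i$ furnished by smoothness of $B$ must satisfy $i\ge 2$, and it transfers to a witness for $u,v$. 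Hence the induction hypothesis applies and yields $\ot(B^{(n_k)})=\omega^{\beta_k}$ for every $k$.

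The decisive step, which I expect to be the main obstacle, is the \emph{monotonicity lemma}: smoothness forces $k\mapsto\ot(B^{(n_k)})$, equivalently $k\mapsto\beta_k$, to be non-decreasing. My approach here is to build, for $n_k<n_{k'}$, an order-embedding $\phi:B^{(n_k)}\to B^{(n_{k'})}$ by raising the leading element: given $t\in B^{(n_k)}$, extend $\langle n_{k'}\rangle^{\frown}(\text{a shift of }t)$ to an infinite subset of $\bigcup B$ and take its unique $B$-prefix (barrier condition~(2)); smoothness is exactly what guarantees that this prefix still lies in the block with minimum $n_{k'}$ and that the assignment is injective and $<_{\mathrm{lex}}$-preserving. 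Verifying that the shift keeps all the boundary cases (when $\min t$ is close to $n_{k'}$) inside the right block is the delicate part and is where smoothness must be used in full. Granting monotonicity, the conclusion is elementary ordinal arithmetic: using $\omega^{a}+\omega^{b}=\omega^{b}$ for $a\le b$, an $\omega$-indexed sum of indecomposables with non-decreasing exponents telescopes to $\omega^{\sup_k\beta_k}$ (when the $\beta_k$ are unbounded) or to $\omega^{\beta+1}$ (when they are eventually constant $=\beta$), so in either case $\ot(B)=\sum_{k<\omega}\omega^{\beta_k}$ is indecomposable. The base case, $\ot(B)=\omega$, is handled directly: such $B$ consists of singletons, namely a cofinal part of $\SGL$, and $\omega=\omega^{1}$ is indecomposable.
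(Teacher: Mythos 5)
The paper does not prove this proposition; it is imported verbatim as Corollary~3.5 of Marcone's \emph{Foundations of BQO theory}, so there is no in-paper argument to compare against. Judged on its own, your outline is essentially correct and can be completed. The block decomposition $\ot(B)=\sum_{k<\omega}\ot(B^{(n_k)})$ is right (every $n\in\bigcup B$ is the minimum of some member of $B$, so the blocks are indexed by all of $\bigcup B$), the inheritance of smoothness by $B^{(n)}$ is right, and the flagged ``delicate part'' does go through: take $h$ matching the increasing enumerations of $\{m\in\bigcup B: m>n_k\}$ and $\{m\in\bigcup B: m>n_{k'}\}$, so that $h$ is increasing, $h(m)\ge m$ and $h(m)>n_{k'}$; if $u$ is the $B$-prefix of a fixed infinite extension of $s':=\langle n_{k'}\rangle^{\frown}h[t]$ and $s:=\langle n_k\rangle^{\frown}t$, then $u(i)\ge s(i)$ for all $i\le\min(\#u,\#s)$, and smoothness applied to the pair $(u,s)$ forbids $\#u<\#s$, so $u$ extends $s'$; since distinct members of $B^{(n_k)}$ are never prefixes of one another (barrier condition (3)), they lex-differ at a common coordinate, which $h$ and the extension preserve, giving the order-embedding and hence monotonicity of $k\mapsto\beta_k$. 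The concluding ordinal arithmetic is also correct. The one statement that is literally false as written is ``each $B^{(n_k)}$ is again a barrier'': when $\langle n_k\rangle\in B$ the block collapses to $\{\emptyset\}$, which violates barrier condition~(1). You need the dichotomy that $B^{(n)}$ is either $\{\emptyset\}$ (order type $1=\omega^0$, indecomposable, and compatible with monotonicity since smoothness then forces all earlier blocks to be $\{\emptyset\}$ as well) or a genuine smooth barrier on $\{m\in\bigcup B: m>n\}$ of strictly smaller order type, to which the induction hypothesis applies; with that patch the separate base case $\ot(B)=\omega$ is not even needed. Marcone's own route is organized around showing that every final segment of $(B,<_{\mathrm{lex}})$, equivalently the restriction $B|X$ to a tail $X$ of $\bigcup B$, has full order type, which is the same shift-embedding idea without the block-by-block summation.
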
 

Then we have \cite[Theorem~4]{MR1884426}.
\begin{proposition}\label{thm4}
Let $\alpha$ be a countable ordinal and $\preceq$ a quasi-ordering on
 $Q$. $\preceq$ is an $\alpha$-\textsc{wqo} if and only if for every smooth barrier
 $B$ with $\ot(B)\le \alpha$, every map $f:B\to Q$ is \emph{good} with respect
 to $\preceq$.
 \end{proposition}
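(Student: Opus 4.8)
The plan is to dispatch the forward direction trivially and to load all the content into the reverse direction via a single combinatorial thinning lemma. For the ``only if'' direction, note that every smooth barrier is in particular a barrier; hence if $\preceq$ is an $\alpha$-\textsc{wqo}, the defining clause of Definition~\ref{def3}, applied to smooth barriers $B$ with $\ot(B)\le\alpha$, immediately yields that every $f:B\to Q$ is good. So the substance lies entirely in the ``if'' direction, which I would prove by contraposition: assuming $\preceq$ is \emph{not} an $\alpha$-\textsc{wqo}, I would manufacture a \emph{smooth} barrier carrying a bad map.

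By Definition~\ref{def3}, failure of the $\alpha$-\textsc{wqo} property supplies a barrier $B$ with $\ot(B)\le\alpha$ together with a bad map $f:B\to Q$. The key step is a \emph{smoothing lemma}: for every barrier $B$ there is an infinite $M\subseteq\bigcup B$ such that $B_M:=\{s\in B : s\subseteq M\}$ is a smooth barrier. Granting this, I would check three routine facts. First, $B_M$ is a barrier on $M$: each $m\in M$ lies in some $s\in B_M$ (take the $B$-prefix of an $\omega$-sequence in $M$ whose least element is $m$), the prefix and incomparability clauses are inherited from $B$, and $\bigcup B_M=M$ is infinite. Second, a barrier is well-ordered by the lexicographic ordering and $B_M\subseteq B$, so $\ot(B_M)\le\ot(B)\le\alpha$. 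Third, badness passes to subfamilies: since $B$ contains no pair $s\triangleleft t$ with $f(s)\preceq f(t)$, neither does $B_M$, whence $f\restriction B_M$ is bad. Thus $B_M$ is a smooth barrier with $\ot(B_M)\le\alpha$ admitting a bad map, contradicting the hypothesis and finishing the reverse direction.

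It remains to prove the smoothing lemma, which I expect to be the main obstacle. Two routes seem viable. The first is a Ramsey-theoretic thinning: the failure of smoothness is the existence of $s,t\in B$ with $\#s<\#t$ whose increasing enumerations satisfy $s_i\ge t_i$ for all $i\le\#s$; encoding this domination pattern as a finite colouring of the relevant configurations of blocks and invoking the Nash-Williams / Galvin--Prikry partition theorem produces an infinite $M$ on which the pattern is decided uniformly, after which one argues the non-smooth alternative is incompatible with $B_M$ remaining a barrier. The second route is transfinite induction on $\ot(B)$ through the canonical decomposition of $B$ at $a=\min\bigcup B$ into the trace $\{s\setminus\{a\} : a\in s\in B\}$ of the blocks through $a$ and the sub-barrier $\{s\in B : a\notin s\}$ of blocks avoiding $a$, smoothing each piece and amalgamating. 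In either case the delicate point is to keep the two requirements --- that $B_M$ stay a barrier and that it become smooth --- simultaneously satisfied, and I regard setting up the colouring (respectively, the amalgamation) so that homogeneity (respectively, the inductive hypotheses) \emph{forces} smoothness as the crux of the whole argument. The indecomposability of $\ot(B_M)$ for the resulting smooth barrier, guaranteed by Proposition~\ref{cor3.5}, serves as a useful consistency check on the construction.
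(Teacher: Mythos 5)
The paper offers no proof of this proposition at all: it is imported verbatim as Theorem~4 of Marcone's \emph{Fine analysis of the quasi-orderings on the power set}~\cite{MR1884426}, so there is no in-paper argument to compare yours against. That said, your reduction is exactly the standard one used there. The forward direction is indeed trivial because smooth barriers are barriers, and your contrapositive for the reverse direction is correctly assembled: granting that every barrier $B$ admits an infinite $M\subseteq\bigcup B$ for which $B_M=\{s\in B : s\subseteq M\}$ is a smooth barrier, your three ``routine facts'' all check out --- $B_M$ is a barrier with $\bigcup B_M=M$; $\ot(B_M)\le\ot(B)$ because a barrier is well-ordered by the lexicographic ordering and $B_M\subseteq B$; and badness passes to $f$ restricted to $B_M$ since $\triangleleft$ is defined intrinsically on finite subsets of $\omega$ and does not depend on the ambient barrier.

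The genuine gap is that the smoothing lemma, which you yourself identify as carrying all the content, is never proved. You name two candidate strategies (a Nash--Williams/Galvin--Prikry thinning of a suitable colouring, or transfinite induction on $\ot(B)$ through the decomposition at $\min\bigcup B$), but you carry out neither, and you explicitly flag without resolving the delicate point of making homogeneity (or the amalgamation) \emph{force} smoothness while keeping $B_M$ a barrier. The lemma is true --- it is the smooth-sub-barrier result in Marcone's \emph{Foundations of BQO theory}~\cite{MR1219735}, and the first of your two routes is essentially how it is proved there --- so your argument is correct modulo a known result; but as a self-contained proof it is incomplete at precisely the one step that is not routine. Since the paper treats this proposition as a citation anyway, the cleanest repair is to cite the smooth-sub-barrier lemma explicitly rather than to gesture at unverified proofs of it; if instead you want a self-contained argument, the whole burden of the write-up should fall on that lemma, not on the surrounding reductions.
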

\begin{corollary}\label{cor:a} $\preceq$ is a \textsc{bqo} if and only if it is an
$\alpha$-\textsc{wqo} for all countable infinite indecomposable ordinal
$\alpha$.\end{corollary}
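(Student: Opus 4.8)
The plan is to reduce the general notion of $\alpha$-\textsc{wqo} to smooth barriers using Proposition~\ref{thm4}, and then to exploit Proposition~\ref{cor3.5}, which says that every smooth barrier has indecomposable order type. The forward implication is immediate: a \textsc{bqo} is an $\alpha$-\textsc{wqo} for \emph{every} countable ordinal $\alpha$ by definition, so in particular for every countable infinite indecomposable one.

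For the converse, assume $\preceq$ is an $\alpha$-\textsc{wqo} for every countable infinite indecomposable $\alpha$, and fix an arbitrary countable ordinal $\alpha$; I must show $\preceq$ is an $\alpha$-\textsc{wqo}. By Proposition~\ref{thm4} it is enough to check that every map $f:B\to Q$ on a smooth barrier $B$ with $\ot(B)\le\alpha$ is good. Fix such a $B$ and put $\gamma:=\ot(B)$. The heart of the argument is to show that $\gamma$ is itself a countable infinite indecomposable ordinal: it is countable since $\gamma\le\alpha<\omega_1$; it is indecomposable by Proposition~\ref{cor3.5}; and it is infinite because condition~(1) in the definition of a barrier forces $\bigcup B$ to be infinite, whence $B$ is infinite and its maximal order type under the lexicographical ordering is at least $\omega$.

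With $\gamma$ thus identified, I would invoke the hypothesis at $\gamma$: $\preceq$ is a $\gamma$-\textsc{wqo}, so Proposition~\ref{thm4} (now with $\gamma$ in the role of $\alpha$) guarantees that every map on every smooth barrier of order type at most $\gamma$ is good. Since $\ot(B)=\gamma\le\gamma$, the given $f:B\to Q$ is good. As $B$ was an arbitrary smooth barrier with $\ot(B)\le\alpha$, Proposition~\ref{thm4} yields that $\preceq$ is an $\alpha$-\textsc{wqo}, and since $\alpha$ was an arbitrary countable ordinal, $\preceq$ is a \textsc{bqo}.

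The one step that deserves care, and which I expect to be the only real obstacle, is the infiniteness of $\gamma$: one has to rule out a finite order type for a smooth barrier, and this is precisely where the requirement that $\bigcup B$ be infinite enters (a finite $B$ would have finite $\bigcup B$). Everything else is a bookkeeping combination of the two quoted propositions, the crucial conceptual point being that Proposition~\ref{thm4} lets us test $\alpha$-\textsc{wqo}-ness only on smooth barriers, whose order types are exactly of the special shape the hypothesis controls.
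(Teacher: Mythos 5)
Your proof is correct and follows exactly the route the paper intends: the corollary is stated there without an explicit proof, as an immediate consequence of Proposition~\ref{thm4} (reduction to smooth barriers) combined with Proposition~\ref{cor3.5} (smooth barriers have indecomposable order type), which is precisely the combination you carry out. Your extra care about why $\ot(B)$ is infinite (a finite barrier would have finite $\bigcup B$, violating the definition) is a sound and worthwhile detail that the paper leaves implicit.
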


We use properties of a barrier from \cite[Lemma~6]{MR1884426}.
 
\begin{proposition}\label{lem6}
For a barrier $B$, let $B^2\subseteq\inhfin{\omega}$ be
\begin{displaymath}
B^2:=\{ s\cup t\;;\; s,t\in B,\ s\triangleleft
t\}.\end{displaymath}
Then 
\begin{enumerate}
\item \label{b} for each $t\in B^2$ there exist unique $\pi_0(t),\pi_1(t)\in B$
      such that $\pi_0(t)\triangleleft \pi_1(t)$ and $t=\pi_0(t)\cup
      \pi_1(t)$;
\item \label{c} if $t,t'\in B^2$ and $t\triangleleft t'$ then
      $\pi_1(t)=\pi_0(t')$;
\item \label{e}  $B^2$ is a barrier; and
\item \label{f} if  $\ot(B)$ is indecomposable then
      $\ot(B^2)=\ot(B)\cdot\omega$.
\end{enumerate}
\end{proposition}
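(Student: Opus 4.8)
The plan is to treat the four items in order, extracting from the barrier axioms the single principle that every infinite (and every sufficiently long finite) sequence has a \emph{unique} prefix lying in $B$; items (1)--(3) then reduce to it almost mechanically, and (4) is where the real work lies.

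For (1), given $t\in B^2$ with two witnessing decompositions $t=s_1\cup t_1=s_2\cup t_2$ (each $s_i\triangleleft t_i$, all in $B$), I note that by the definition of $\triangleleft$ each $s_i$ is a prefix of $t$ and each $t_i$ is a prefix of $t\setminus\{\min t\}$. Two prefixes of the same sequence are comparable under inclusion, so the antichain axiom forces $s_1=s_2$ and $t_1=t_2$; existence being granted by $t\in B^2$, the maps $\pi_0,\pi_1$ are well defined. For (2), if $t\triangleleft t'$ in $B^2$ and $w=t\cup t'$, then $\min t=\min w$ and both $t\setminus\{\min t\}$ and $t'$ are initial segments of $w\setminus\{\min w\}$; consequently $\pi_1(t)$ (a prefix of $t\setminus\{\min t\}$) and $\pi_0(t')$ (a prefix of $t'$) are two members of $B$ that are prefixes of one common infinite sequence, so they coincide by uniqueness of the $B$-prefix.

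The observation driving (3) is that for every $u\in B^2$ one in fact has $\pi_1(u)=u\setminus\{\min u\}\in B$: were $\pi_1(u)$ a \emph{proper} prefix of $u\setminus\{\min u\}$ it would omit $\max u$, forcing $\max u\in\pi_0(u)$ and hence $\pi_0(u)=u$, whence $\pi_1(u)\subsetneq\pi_0(u)$ with both in $B$, contradicting the antichain axiom. Granting this, the three barrier axioms for $B^2$ follow. Infinitude: every $s\in B$ extends to some $t$ with $s\triangleleft t$ (take the $B$-prefix of $s\setminus\{\min s\}$ followed by large elements), so $\bigcup B^2=\bigcup B$. The prefix axiom: for infinite $\sigma\subseteq\bigcup B$, the $B$-prefix of $\sigma$ together with the $B$-prefix of $\sigma\setminus\{\min\sigma\}$ union to an initial segment of $\sigma$ lying in $B^2$. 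The antichain axiom is where $\pi_1(u)=u\setminus\{\min u\}$ pays off: if $u\subseteq u'$ in $B^2$, a short case analysis on $\min u$ versus $\min u'$, each case reducing via the antichain property of $B$ applied to the sets $u\setminus\{\min u\}$, $u'\setminus\{\min u'\}$ and $\pi_0(u)$, yields $u=u'$.

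For (4) I would pass to the lexicographic order. Grouping $B^2$ by $n=\min u$ gives a decomposition into intervals $C_n=\{u\in B^2:\min u=n\}$ arranged in increasing order of $n$, so $\ot(B^2)=\sum_{n\in\bigcup B}\ot(C_n)$, a sum of $\omega$ terms. The map $u\mapsto u\setminus\{\min u\}$ embeds each $C_n$ lexicographically into $\{t\in B:\min t>n\}$; since $\ot(B)=\omega^\beta$ is indecomposable (Proposition \ref{cor3.5}) and the discarded part $\{t\in B:\min t\le n\}$ is an initial segment of order type $<\omega^\beta$, additive principality gives $\ot(\{t\in B:\min t>n\})=\omega^\beta$, whence $\ot(C_n)\le\omega^\beta$. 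The main obstacle is the matching lower bound: I must show that the $t$ for which $\{n\}\cup t$ fails to lie in $B^2$ (those whose $B$-prefix spills beyond $\{n\}\cup t$) form a set whose removal does not lower the order type below $\omega^\beta$, so that $\ot(C_n)=\omega^\beta$ for every $n$. This is the delicate ordinal-arithmetic step, again leaning on indecomposability; once it is in place, $\ot(B^2)=\sum_{n<\omega}\omega^\beta=\omega^\beta\cdot\omega=\ot(B)\cdot\omega$, as claimed.
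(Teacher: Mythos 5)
First, a remark on the ground truth: the paper does not prove this proposition at all — it is imported as Lemma~6 of Marcone's \emph{Fine analysis of the quasi-orderings on the power set} (the reference MR1884426), so there is no in-paper argument to compare you against, and your reconstruction has to stand on its own. For items (1)--(3) it does. Your key observation that $\pi_1(u)=u\setminus\{\min u\}$ for every $u\in B^2$ (since otherwise $\pi_0(u)=u$ and then $\pi_1(u)\subsetneq\pi_0(u)$ violates the antichain axiom) is exactly the right lever: it yields uniqueness in (1), the identity $\pi_1(t)=\pi_0(t')$ in (2) via comparability of two $B$-prefixes of one common sequence, and all three barrier axioms for $B^2$ in (3). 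These arguments are correct as sketched.

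Item (4), however, is not proved. The upper bound is fine: $C_n=\{u\in B^2:\min u=n\}$ embeds lex-order-preservingly into the final segment $\{t\in B:\min t>n\}$, whose order type is $\ot(B)=\omega^\beta$ by indecomposability. But the entire content of the statement sits in the lower bound that you defer with ``once it is in place,'' and it is not a routine ordinal-arithmetic step. Note that since $\omega^\beta$ is additively closed, if only finitely many $C_n$ attained order type $\omega^\beta$ the total sum would be strictly below $\omega^\beta\cdot\omega$; so you genuinely need $\ot(C_n)=\omega^\beta$ for infinitely many $n$. This cannot be read off from a single $s\in B$ with $\min s=n$: the set $T_s=\{t\in B:s\triangleleft t\}$ can have order type far below $\ot(B)$ — already for $B=[\omega]^2$ one has $\ot(T_s)=\omega$ while $\ot(B)=\omega^2$ — so $C_n$ is only recovered as the lexicographic sum $\sum_{s}\ot(T_s)$ over all $s\in B$ with $\min s=n$, and showing that this sum reaches $\omega^\beta$ requires controlling how the order type of a barrier distributes over its derived barriers. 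That is precisely the rank machinery of Marcone's \emph{Foundations of BQO theory} (MR1219735) on which his Lemma~6 relies. As written, your argument establishes (1)--(3) and the inequality $\ot(B^2)\le\ot(B)\cdot\omega$, but not the equality claimed in (4).
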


\subsection{Powerset ordering}\label{subsec:po}

\begin{definition}\label{def:ae}
For a quasi-ordering  $(X, \ \preceq)$, we define a quasi-ordering on
the powerset $P(X)$ by 
\begin{displaymath}
v \ae v':\iff  \forall x'\in v'\exists x\in v.\ x\preceq x'.
\end{displaymath}
\end{definition} 

It is studied for the reachability
analysis of Petri nets~(verification of infinite-state
systems~\cite{MR1946092}, Timed Petri
net~\cite{MR1946092}\cite{MR2050792},$\ldots$.)

\begin{proposition}[\protect{\cite{MR1737752}, \cite[Theorem~9]{MR1884426}}]\label{thm9}
If $\alpha$ is a countable infinite indecomposable ordinal and
 $(Q,\preceq)$ is an $(\alpha\cdot\omega)$-\textsc{wqo}, then the
 powerset $P(Q)$ ordered by $\ae$ is an
 $\alpha$-\textsc{wqo}.\end{proposition}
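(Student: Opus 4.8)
The plan is to argue by contradiction, transporting a hypothetical bad map into $P(Q)$ under $\ae$ to a bad map into $Q$ under $\preceq$ along the derived barrier $B^2$. By Proposition~\ref{thm4} it suffices to show that for every \emph{smooth} barrier $B$ with $\ot(B)\le\alpha$, every map $f\colon B\to P(Q)$ is good with respect to $\ae$. So I would fix such a $B$ and a map $f$, and suppose toward a contradiction that $f$ is bad. Unwinding Definition~\ref{def:ae}, badness says that for all $s,t\in B$ with $s\triangleleft t$ we have $f(s)\not\ae f(t)$, i.e.\ there exists a witness element $x\in f(t)$ with $y\not\preceq x$ for every $y\in f(s)$.

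Next I would package these witnesses over the barrier $B^2$ of Proposition~\ref{lem6}. Each $u\in B^2$ decomposes uniquely as $u=\pi_0(u)\cup\pi_1(u)$ with $\pi_0(u)\triangleleft\pi_1(u)$, both in $B$; taking $s=\pi_0(u)$ and $t=\pi_1(u)$, I define $g(u)\in f(\pi_1(u))$ to be the chosen witness, so that $y\not\preceq g(u)$ for every $y\in f(\pi_0(u))$. I claim $g\colon B^2\to Q$ is bad with respect to $\preceq$. Indeed, let $u\triangleleft u'$ in $B^2$. The chaining property of Proposition~\ref{lem6} gives $\pi_1(u)=\pi_0(u')$, hence $g(u)\in f(\pi_1(u))=f(\pi_0(u'))$. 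Applying the defining property of $g(u')$ to the element $g(u)$ of $f(\pi_0(u'))$ yields $g(u)\not\preceq g(u')$. Thus no pair $u\triangleleft u'$ witnesses goodness, and $g$ is bad.

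Finally I would attend to the order-type bookkeeping and derive the contradiction. Since $B$ is smooth, $\ot(B)$ is indecomposable by Proposition~\ref{cor3.5}, so the order-type formula of Proposition~\ref{lem6} gives $\ot(B^2)=\ot(B)\cdot\omega$. As $\ot(B)\le\alpha$ and ordinal multiplication is monotone in its left factor, $\ot(B^2)\le\alpha\cdot\omega$, and $B^2$ is a barrier by Proposition~\ref{lem6}. But $(Q,\preceq)$ is an $(\alpha\cdot\omega)$-\textsc{wqo}, so by Definition~\ref{def3} every map from a barrier of order type at most $\alpha\cdot\omega$ is good; in particular $g$ is good, contradicting the previous paragraph. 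Hence no bad $f$ exists, and $\ae$ is an $\alpha$-\textsc{wqo}.

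The crux of the argument is the chaining identity $\pi_1(u)=\pi_0(u')$ for $u\triangleleft u'$ in $B^2$: this is exactly what allows one witness element $g(u)$ to serve double duty, lying in the ``upper'' block $f(\pi_1(u))$ of $u$ while simultaneously being the ``lower'' block $f(\pi_0(u'))$ that $g(u')$ dominates nothing of. The remaining delicate point is the passage from $B$ to $B^2$ while controlling order type, which is precisely where smoothness (through indecomposability of $\ot(B)$) is needed so that $\ot(B^2)=\ot(B)\cdot\omega$ and the $(\alpha\cdot\omega)$-\textsc{wqo} hypothesis can be invoked; everything else is routine unwinding of the definitions.
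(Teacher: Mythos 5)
Your proof is correct. Note that the paper does not prove this proposition at all --- it is imported from the literature (Jan\v{c}ar and Marcone's Theorem~9) and used as a black box --- so there is no in-paper argument to compare against; your argument is the standard one from the cited source, and it is exactly the same $B^2$-technique (decompose $u\in B^2$ via $\pi_0,\pi_1$, chain witnesses through $\pi_1(u)=\pi_0(u')$, and control $\ot(B^2)=\ot(B)\cdot\omega$ via smoothness and indecomposability) that the paper itself deploys one level up in the proof of Lemma~\ref{lem:main}. All the delicate points are handled: the witness $g(u)$ exists because $f(\pi_0(u))\not\ae f(\pi_1(u))$ forces $f(\pi_1(u))\ne\emptyset$, the decomposition of $u\in B^2$ is unique so $g$ is well defined, and left-monotonicity of ordinal multiplication gives $\ot(B^2)\le\alpha\cdot\omega$.
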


\begin{lemma}\label{lem:rado} For a quasi-ordering
 $\X=(Q,\preceq)$, the following are equivalent:
\begin{enumerate}
\item \label{prop:rado:1} (a) $\X$ is a \textsc{wqo}; and (b) let $F$ be
      any function from $[\omega]^2$ to $Q$. Then if
      $F(\{i,j\})\prec F(\{i,j+1\})$ for any $i<j<\omega$, then there are
      $i<j<k<\omega$ such that $F(\{i,j\})\prec F(\{j,k\})$.

\item \label{prop:rado:2}
 $\X$ is an $\omega^2$-\textsc{wqo}.

\item \label{prop:rado:3}
 $\left(\inhfin{Q}, \ \ae\right)$ is a \textsc{wqo}.

\item \label{prop:rado:4}
 $\left(\power{Q}, \ \ae\right)$ is a \textsc{wqo}.
\end{enumerate}
\end{lemma}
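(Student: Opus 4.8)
The plan is to prove the equivalence of the four conditions in Lemma~\ref{lem:rado} by establishing a cycle of implications, exploiting the machinery of smooth barriers and the powerset-ordering propositions recalled above. The anchor of the argument is the observation that condition~\eqref{prop:rado:2}, being an $\omega^2$-\textsc{wqo}, is exactly the hypothesis needed to feed into Proposition~\ref{thm9}. Indeed, $\omega$ is a countable infinite indecomposable ordinal and $\omega\cdot\omega=\omega^2$, so taking $\alpha=\omega$ in Proposition~\ref{thm9} shows that an $\omega^2$-\textsc{wqo} $(Q,\preceq)$ has $(P(Q),\ae)$ an $\omega$-\textsc{wqo}, hence in particular a \textsc{wqo}. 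This gives $\eqref{prop:rado:2}\Rightarrow\eqref{prop:rado:4}$ almost for free.

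First I would dispatch the easy reductions. The implication $\eqref{prop:rado:4}\Rightarrow\eqref{prop:rado:3}$ follows because $\left(\inhfin{Q},\ae\right)$ embeds into $\left(\power{Q},\ae\right)$ as a sub-quasi-ordering, and a sub-quasi-ordering of a \textsc{wqo} is a \textsc{wqo}. For $\eqref{prop:rado:3}\Rightarrow\eqref{prop:rado:1}$, I would first note that $\X$ itself embeds into $\left(\inhfin{Q},\ae\right)$ via $x\mapsto\{x\}$ (since $\{x\}\ae\{y\}$ iff $x\preceq y$), giving part~(a); for part~(b), given a function $F:[\omega]^2\to Q$ satisfying the hypothesis $F(\{i,j\})\prec F(\{i,j+1\})$, I would build an auxiliary sequence of finite subsets of $Q$ and apply the \textsc{wqo} property of $\ae$ to extract the desired $i<j<k$. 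The combinatorial content here is a Ramsey-type coloring argument converting the partial monotonicity of $F$ into a $\ae$-comparison.

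The main obstacle is the implication $\eqref{prop:rado:1}\Rightarrow\eqref{prop:rado:2}$, i.e.\ showing that the Rado-style two-dimensional condition (a)+(b) upgrades an ordinary \textsc{wqo} to an $\omega^2$-\textsc{wqo}. By Proposition~\ref{thm4} it suffices to check that every map $f:B\to Q$ from a smooth barrier $B$ with $\ot(B)\le\omega^2$ is good. The plan is to stratify by $\ot(B)$: by Proposition~\ref{cor3.5} this order type is indecomposable, so $\ot(B)$ is either $\omega$ (where $B=\SGL$ consists of singletons and goodness is precisely the \textsc{wqo} hypothesis~(a)) or $\omega^2$. In the latter case I would use Proposition~\ref{lem6}, writing $B$ as arising from the $B^2$ construction applied to the singleton barrier $\SGL$, so that $\ot(\SGL^2)=\omega\cdot\omega=\omega^2$ by part~\eqref{f}; the maps $\pi_0,\pi_1$ from part~\eqref{b}--\eqref{c} let me read off a map on pairs $\{i,j\}\in[\omega]^2=\SGL^2$, and condition~(b) is engineered to supply exactly the $\triangleleft$-comparison witnessing goodness. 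The delicate point is handling an arbitrary smooth barrier of order type $\omega^2$ rather than the canonical $[\omega]^2$, and verifying that the $\triangleleft$-relation on $B$ translates correctly under $\pi_0,\pi_1$ into the index comparisons $i<j<k$ demanded by~(b); this is where the precise definitions of $\triangleleft$ and of the projections in Proposition~\ref{lem6} must be invoked carefully.

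To close the cycle I would finish with $\eqref{prop:rado:2}\Rightarrow\eqref{prop:rado:4}$ as noted above, completing $\eqref{prop:rado:1}\Rightarrow\eqref{prop:rado:2}\Rightarrow\eqref{prop:rado:4}\Rightarrow\eqref{prop:rado:3}\Rightarrow\eqref{prop:rado:1}$. I expect the reductions among \eqref{prop:rado:2},\eqref{prop:rado:3},\eqref{prop:rado:4} to be short once Proposition~\ref{thm9} is in hand, and the genuine work to reside entirely in the barrier-combinatorial passage from the Rado condition to the $\omega^2$-\textsc{wqo} property.
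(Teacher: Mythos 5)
Your cycle $(1)\Rightarrow(2)\Rightarrow(4)\Rightarrow(3)\Rightarrow(1)$ is structurally quite different from the paper's proof, which never touches barriers directly: the paper cites Rado's theorem to identify condition (1) with ``$\X^\omega$ is a \textsc{wqo}'', Higman's theorem to pass to $\X^{<\omega^2}$, a result from Marcone's \emph{Foundations of BQO theory} to identify that with condition (2), and Marcone's Corollary~12 for the equivalence with (3) and (4). Of your steps, $(2)\Rightarrow(4)$ via Proposition~\ref{thm9} with $\alpha=\omega$ is correct and clean, $(4)\Rightarrow(3)$ is immediate by restriction, and $(3)\Rightarrow(1)$ is workable (singletons give (a); for (b), the sets $v_n=\{F(\{i,n\})\;;\;i<n\}$ admit a comparable pair $v_m\ae v_n$ with $m<n$, which yields some $i<m$ with $F(\{i,m\})\preceq F(\{m,n\})$, and strictness is recovered from the hypothesis $F(\{m,n\})\prec F(\{m,n+1\})$).

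The genuine gap is in $(1)\Rightarrow(2)$, which you rightly call the main obstacle but do not close; the two points you defer as ``delicate'' are the entire content of the implication. First, an arbitrary smooth barrier of order type $\omega^2$ need not be $\SGL^2=[\omega]^2$: for instance $\{\{0\}\}\cup[\omega\setminus\{0\}]^2$ is a smooth barrier of order type $\omega^2$ whose elements have mixed cardinalities, so there is no way to ``read off a map on pairs'' via $\pi_0,\pi_1$ without first proving a sub-barrier uniformization lemma that the paper does not supply. Second, even for $B=[\omega]^2$ exactly, a bad $f$ only gives $f(\{i,j\})\not\preceq f(\{j,k\})$ for all $i<j<k$, which violates the \emph{conclusion} of (b); but (b) applies only to functions satisfying the monotonicity hypothesis $F(\{i,j\})\prec F(\{i,j+1\})$, which a bad $f$ need not satisfy. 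Manufacturing an $F$ with that hypothesis out of (a) plus a bad array is precisely the Ramsey-theoretic core of Rado's characterization theorem --- the very result the paper avoids reproving by citation. So your outline is a plausible plan for a self-contained proof, but as written the crucial implication rests on two unproved combinatorial reductions rather than on anything available in the paper.
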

\begin{proof} The proof is similar to that of
 \cite[Corollary~12]{MR1884426}, but we use Rado's characterization~\cite[Theorem~3]{MR0066441} of
 \textsc{wqo}s $(Q, \preceq)$
such that the set of sequences of elements of $Q$ with the length
 $\omega$ is again a \textsc{wqo}. For any \textsc{wqo} $\X=(Q,\preceq)$ and any countable ordinal $\alpha$,
 let $\X^\alpha$ ($\X^{<\alpha}$ resp.) be the set of sequences of elements of $Q$ of length
 $\alpha$ (less than $\alpha$, resp.)  quasi-ordered by naturally generalized Higman's
 embedding. The condition \eqref{prop:rado:1} is equivalent to 
 $\X^\omega$ being a \textsc{wqo}, by \cite[Theorem~3]{MR0066441}.
 By Higman's theorem, it is equivalent to
 $\left(\X^\omega\right)^{<\omega}= \X^{<\omega^2}$ being a
 \textsc{wqo}.  By
\cite{MR1219735}, it is equivalent to 
 \eqref{prop:rado:2}. The equivalence to the other conditions follows
 from \cite[Corollary~12]{MR1884426}. \qed
\end{proof}

\section{Better elasticity | deformation as powerset ordering}
\label{sec:bess}
\label{sec:contbqo}

Given a deformation $\op$ and a \textsc{wqo}
$\X$,
we try to construct explicitly from $\X$ a suitable \textsc{wqo}
$\Y$  such that
$\op[\closure{\X}]\subseteq\closure{\Y}$.

\begin{definition}\label{def:sqsubseteq} Let $\L$ and $\M$ be set systems.
Suppose  $R\subseteq (\bigcup\L) \times \power{\bigcup\M}$ and 
 $\X:=(\bigcup\M,\ \preceq)$ is a quasi-ordering.  Then 
define a quasi-ordering $Q_R(\X)$ by $(\bigcup\L,\ \sqsubseteq)$ as follows:
For any $x,x'\in\bigcup\L$,
we write $x\sqsubseteq x'$, if whenever $R(x,v)$ holds, there exists
 $v'$ such that $R(x',v')$ and $v\ae v'$.
\end{definition}

\begin{lemma}\label{lem:qo}Let $\L$ and $\M$ be set systems.
Suppose  $R\subseteq (\bigcup\L) \times \power{\bigcup\M}$. If $\X$ is a quasi-ordering on $\bigcup\M$, then $Q_R(\X)$ is indeed a quasi-ordering on $\bigcup\L$.
\end{lemma}
\begin{proof}Let $Q_R(\X)$ be $(\bigcup\L, \sqsubseteq)$.
It is easy to see that $x\sqsubseteq x$. From  $x\sqsubseteq x'$ and
 $x'\sqsubseteq x''$, we  derive $x\sqsubseteq x''$. Let $R(x, v)$. By $x\sqsubseteq x'$, there exists $v'$ such that $R(x',v')$
 and $v\ae v'$. By $x'\sqsubseteq x''$, there exists $v''$ such that
 $R(x'',v'')$ and $v'\ae v''$. Because $\ae$ is a quasi-ordering, we have $v\ae
 v''$. \qed\end{proof}

It seems difficult to replace a ``quasi-ordering'' with a ``partial ordering'' in
Lemma~\ref{lem:qo}. For a following theorem, see (ii) in the first section for a left-inverse $\qo{\cdot}$ of
$\closure{\bullet}$, and Lemma~\ref{lem:11} for the definition of $\op_R$.

\begin{theorem}\label{lem:b}For any
 $R\subseteq(\bigcup\L)\times\power{\bigcup\M}$ , we have $\op_R\left[ \M \right]\subseteq\closure
{Q_R(\qo{\M})}$.\end{theorem}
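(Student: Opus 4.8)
The plan is to prove the inclusion elementwise, unwinding the three constructions involved. Since $\op_R[\M]=\{\op_R(M)\;;\;M\in\M\}$ and $\closure{Q_R(\qo{\M})}$ is by definition the family of subsets of $\bigcup\L$ that are upper-closed with respect to the order $\sqsubseteq$ of $Q_R(\qo{\M})$, it suffices to fix an arbitrary $M\in\M$ and show that $\op_R(M)$ is $\sqsubseteq$-upper-closed. Recall from Lemma~\ref{lem:11} that $\op_R(M)=\{x\in\bigcup\L\;;\;\exists v\subseteq M.\ R(x,v)\}$, and write $\preceq$ for the order $\preceq_\M$ of $\qo{\M}$, so that $x\preceq y$ holds exactly when every member of $\M$ containing $x$ also contains $y$.

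To verify upper-closedness I would take $x\in\op_R(M)$ together with $x\sqsubseteq x'$ and aim to conclude $x'\in\op_R(M)$. Membership of $x$ supplies some $v\subseteq M$ with $R(x,v)$; applying the definition of $\sqsubseteq$ in $Q_R(\qo{\M})$ (Definition~\ref{def:sqsubseteq}) to this particular $v$ yields a $v'$ with $R(x',v')$ and $v\ae v'$. The whole statement then reduces to the single inclusion $v'\subseteq M$, because once that is known, $v'$ is a subset of $M$ witnessing, via $R(x',v')$, that $x'\in\op_R(M)$.

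The crux is precisely this inclusion $v'\subseteq M$, and it is where the design of $\qo{\M}$ does the work: fix $y'\in v'$; since $v\ae v'$ (Definition~\ref{def:ae}) there is $y\in v$ with $y\preceq y'$, and as $y\in v\subseteq M$ while $\preceq=\preceq_\M$ forces every member of $\M$ that contains $y$ to contain $y'$, we obtain $y'\in M$; since $y'\in v'$ was arbitrary, $v'\subseteq M$. I do not expect a genuine obstacle here: the entire content is the observation that the powerset ordering $\ae$ built from $\preceq_\M$ cannot push an element of $v'$ outside $M$, which is exactly the property that the definition of $\qo{\M}$ was engineered to provide, and everything else is bookkeeping over the equivalences in Lemma~\ref{lem:11} and Definitions~\ref{def:ae} and~\ref{def:sqsubseteq}.
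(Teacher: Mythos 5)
Your proof is correct and follows essentially the same route as the paper's: fix $M\in\M$, use Lemma~\ref{lem:11} to get $v\subseteq M$ with $R(x,v)$, invoke Definition~\ref{def:sqsubseteq} to obtain $v'$ with $R(x',v')$ and $v\ae v'$, and conclude $v'\subseteq M$ from the fact that members of $\M$ are upper-closed under $\preceq_\M$. Your write-up is in fact slightly cleaner, since you avoid the paper's reuse of the letters $x,x'$ for elements of $v,v'$.
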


\begin{proof}  Let $(\bigcup\M,\ \preceq)=\qo{\M}$ and
$(\bigcup\L,\ \sqsubseteq)=Q_R(\qo{\M})$.

We verify if $A\in\M$ and $\op_R(A)\ni x \sqsubseteq x'$, then
$\op_R(A)\ni x'$.

By $\op_R(A)\ni x$, there exists $v$ such that $R(x,v)$ and $v\subseteq
 A$. Since $x \sqsubseteq x'$, there exists $v'$ such that
 $R(x',v')$ and every $x'\in v'$ has $x\in v$ with $x\preceq x'$.
Because $x\in A$ and $A\in \M$
 is upper-closed with respect to $\preceq$, $x'\in A$. So  $v'\subseteq A$. This means
 $\op_R(A)\ni x'$.  \qed
\end{proof} 

We are not sure whether $Q_R(\qo{\closure{\X}})$ is a \textsc{wqo} for all
\textsc{wqo} $\X$, because, according to Lemma~\ref{lem:rado},
the quasi-ordering $\ae$ is not always a \textsc{wqo}.

Instead, we introduce a stronger set system than an
\textsc{fess}.
\begin{definition}[\textsc{BESS}s]We say a set system $\L\subseteq P(T)$ is
 a \emph{better elastic set system}~(\textsc{bess} for short) or $\L$ has \emph{better elasticity}, provided 
$\qo{\L}$ is a \textsc{bqo}.
\end{definition}

\begin{example}
 For every \textsc{wqo} $\X$ which is not a \textsc{bqo}, a set system
     $\closure{\X}$ is an \textsc{fess} but not a \textsc{bess}, since
     $\qo{\cdot}$ is a left-inverse of $\closure{\bullet}$.
\end{example}

\begin{lemma}
\begin{enumerate}
\item
 A quasi-ordering $(X,\, \preceq)$ is a \textsc{bqo}, if and only if
     $\mathrm{ss}\left((X,\,\preceq)\right)$ is a \textsc{bess}.

\item Every \textsc{bess} is an \textsc{fess}.
\end{enumerate} 
\end{lemma}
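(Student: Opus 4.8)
The plan is to handle the two parts separately: part~(1) reduces to the left-inverse property of $\qo{\cdot}$ already recorded in the introduction, while part~(2) reduces to an inclusion of set systems together with the inheritance of the \textsc{fess} property under subfamilies. For part~(1) I would first recall that $\qo{\cdot}$ is a left-inverse of $\closure{\bullet}$, that is, $\qo{\closure{\X}}=\X$ for every quasi-ordering $\X=(X,\preceq)$. This is checked directly: $\bigcup\closure{\X}=X$ because $X$ is itself $\preceq$-upper-closed, and for $x,y\in X$ one has $x\preceq_{\closure{\X}}y$ iff every $\preceq$-upper-closed set containing $x$ contains $y$; testing this against the principal upper set $\{z : x\preceq z\}$ shows it is equivalent to $x\preceq y$. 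Granting $\qo{\closure{\X}}=\X$, the equivalence is then immediate by unwinding the definition of \textsc{bess}: $\closure{\X}$ is a \textsc{bess} iff $\qo{\closure{\X}}$ is a \textsc{bqo} iff $\X$ is a \textsc{bqo}.

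For part~(2) the first step is the inclusion $\L\subseteq\closure{\qo{\L}}$, valid for \emph{every} set system $\L$. Writing $\qo{\L}=(\bigcup\L,\preceq_\L)$, I would note that each $L\in\L$ is $\preceq_\L$-upper-closed: if $x\in L$ and $x\preceq_\L y$, then instantiating the defining clause of $\preceq_\L$ at the member $L$ itself forces $y\in L$. Since $\bigcup\closure{\qo{\L}}=\bigcup\L$, this gives $L\in\closure{\qo{\L}}$, hence $\L\subseteq\closure{\qo{\L}}$. Next, if $\L$ is a \textsc{bess} then $\qo{\L}$ is a \textsc{bqo}, and because $\SGL$ is a barrier every \textsc{bqo} is a \textsc{wqo}; so $\qo{\L}$ is a \textsc{wqo}. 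By the correspondence recalled in the introduction, namely that $\dim\closure{\X}$ is defined exactly when $\otp(\X)$ is, i.e.\ exactly when $\X$ is a \textsc{wqo}, the set system $\closure{\qo{\L}}$ is an \textsc{fess}. What remains is to pass from the superfamily $\closure{\qo{\L}}$ down to $\L$.

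The hard part, and really the only step with content, is therefore the inheritance of the \textsc{fess} property under $\subseteq$: if $\L'$ is an \textsc{fess} and $\L\subseteq\L'$, then $\L$ is an \textsc{fess}. I would argue this from the characterization of \textsc{fess}s as the set systems of finite elasticity: any infinite-elasticity witness, an infinite sequence $(w_n)_n$ in $\bigcup\L$ and $(L_n)_n$ in $\L$ with $\{w_0,\dots,w_{n-1}\}\subseteq L_n$ and $w_n\notin L_n$, is simultaneously such a witness for $\L'$, since $\bigcup\L\subseteq\bigcup\L'$ and $L_n\in\L\subseteq\L'$; contrapositively, finite elasticity of $\L'$ descends to $\L$. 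Equivalently, one may observe that the game tree of $\L$ embeds into that of $\L'$, so well-foundedness, and hence definedness of $\dim$, is preserved, with $\dim\L\le\dim\L'$. Applying this with $\L'=\closure{\qo{\L}}$ yields that $\L$ is an \textsc{fess}, completing part~(2). The only delicate point to state carefully is which formulation of \textsc{fess} (order type of the game tree versus finite elasticity) one invokes for this monotonicity, and I would pin that down by citing the equivalence established in \cite{AkamaSetSystem}.
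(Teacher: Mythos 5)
Your proof is correct, and part~(1) coincides with the paper's (both reduce the claim to the left-inverse identity $\qo{\closure{\X}}=\X$ and the definition of a \textsc{bess}). For part~(2), however, you take a genuinely different route. The paper argues directly: given an infinite bad learning sequence $\left\langle\langle t_0,L_1\rangle,\langle t_1,L_2\rangle,\ldots\right\rangle$, it observes $t_i\in L_j\not\ni t_j$ for $i<j$, hence $t_i\not\preceq_\L t_j$, so the map $\{i\}\mapsto t_i$ is a bad function on the barrier $\SGL$, contradicting that $\qo{\L}$ is a \textsc{bqo} (indeed even an $\omega$-\textsc{wqo}). You instead factor the argument through three intermediate facts: the inclusion $\L\subseteq\closure{\qo{\L}}$, the cited equivalence from \cite{AkamaSetSystem} that $\closure{\X}$ is an \textsc{fess} exactly when $\X$ is a \textsc{wqo}, and the monotonicity of the \textsc{fess} property under subfamilies. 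Each step checks out (in particular, every bad learning sequence of $\L$ is literally one of $\closure{\qo{\L}}$, which is the same observation the paper makes in its proof of Lemma~\ref{lem:linear} for $\hat\L$). The paper's proof buys self-containedness and brevity — it is a two-line barrier argument exposing the combinatorial content — while yours buys reusability: the inclusion $\L\subseteq\closure{\qo{\L}}$ and the subfamily monotonicity of $\dim$ are general facts worth isolating, and your route makes visible that only the \textsc{wqo}-ness (not full \textsc{bqo}-ness) of $\qo{\L}$ is used, which is also implicit in the paper's argument.
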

\begin{proof}(1) By (ii) in the first section.
(2) If a set system $\L$ has an infinite learning sequence~(see Definition~\ref{def:dimfess}) $\left\langle \langle t_0,
 L_1\rangle , \langle t_1, L_2\rangle,\ldots \right\rangle$, then we
 have a barrier $\SGL$ and a function $f\;;\;
\{i\}\in \SGL\mapsto t_i$  such that $t_i\in L_j\not\ni t_j$ hence
 $t_i\not\preceq_\L t_j$ $(i<j)$. This contradicts that $\qo{\L}$ is a
 \textsc{bqo}. \qed
\end{proof}

As the class of \textsc{bqo}s is closed under infinitary operations than 
the class of \textsc{wqo}s is, we prove that the class of
\textsc{bess}s is closed under infinitary operations~(i.e., deformations) than the class of
\textsc{fess}s is. See Lemma~\ref{lem:11} for the characterization of deformations.

\begin{theorem}\label{main:bqo}
Assume $\L$ and $\M$ are set systems
 and  $\op:\inj \M\to \inj \L$
 is a  deformation. Then if $\M$ is a \textsc{bess},
 so is $\injinv  \op\left[\inj \M\right] $.
\end{theorem}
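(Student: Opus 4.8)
The plan is to reduce Theorem~\ref{main:bqo} to the machinery already assembled in this section, so that the only genuinely new work is a \textsc{bqo}-theoretic closure fact rather than any fresh set-system combinatorics. First I would fix notation by writing $\preceq$ for the quasi-ordering $\qo{\M}$ on $\bigcup\M$ and recalling that, since $\M$ is a \textsc{bess}, $(\bigcup\M,\preceq)$ is a \textsc{bqo}. By Lemma~\ref{lem:11}, the deformation $\op$ is $\op_R$ for a suitable binary relation $R\subseteq(\bigcup\L)\times P(\bigcup\M)$. Theorem~\ref{lem:b} then gives the key containment
\begin{align*}
\op_R[\M]\subseteq\closure{Q_R(\qo{\M})},
\end{align*}
so the image of $\M$ under $\op$ sits inside the set system of upper-closed sets of the quasi-ordering $Q_R(\qo{\M})=(\bigcup\L,\sqsubseteq)$. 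The strategy is therefore to show that $Q_R(\qo{\M})$ is itself a \textsc{bqo}; once that is in hand, $\closure{Q_R(\qo{\M})}$ is a \textsc{bess}, and since being a \textsc{bess} is inherited by subsystems whose union is downward adequate, the image $\op[\M]$ is a \textsc{bess} as well.

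The main step is the \textsc{bqo} claim for $Q_R(\qo{\M})$. By Corollary~\ref{cor:a} it suffices to verify that $\sqsubseteq$ is an $\alpha$-\textsc{wqo} for every countable infinite indecomposable ordinal $\alpha$. The definition of $\sqsubseteq$ (Definition~\ref{def:sqsubseteq}) compares $x,x'$ by demanding that each witness $v$ with $R(x,v)$ be matched by a witness $v'$ with $R(x',v')$ and $v\ae v'$; thus $\sqsubseteq$ is, up to the indexing by witnesses, governed by the powerset ordering $\ae$ on $P(\bigcup\M)$. Here Proposition~\ref{thm9} is exactly the tool: if $(\bigcup\M,\preceq)$ is an $(\alpha\cdot\omega)$-\textsc{wqo}, then $(P(\bigcup\M),\ae)$ is an $\alpha$-\textsc{wqo}. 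Since $(\bigcup\M,\preceq)$ is a \textsc{bqo} it is an $(\alpha\cdot\omega)$-\textsc{wqo} for every such $\alpha$, so $(P(\bigcup\M),\ae)$ is an $\alpha$-\textsc{wqo} for every countable infinite indecomposable $\alpha$, i.e.\ a \textsc{bqo}. The plan is then to realize $Q_R(\qo{\M})$ as an order-preserving (indeed, bad-sequence-reflecting) image of $(P(\bigcup\M),\ae)$, or of a sequence space built from it, so that a bad map $f:B\to(\bigcup\L,\sqsubseteq)$ on a smooth barrier lifts to a bad map into $(P(\bigcup\M),\ae)$, contradicting that the latter is a \textsc{bqo}.

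The hard part will be this lifting, because $\sqsubseteq$ is defined by a $\forall v\,\exists v'$ clause ranging over \emph{all} witnesses of $x$, not by a single chosen representative, so a naive assignment $x\mapsto v_x$ need not turn $x\sqsubseteq x'$ into $v_x\ae v_{x'}$. To handle this I would, given a bad $f:B\to(\bigcup\L,\sqsubseteq)$, use badness to select for each $s\in B$ a witness $v_s$ with $R(f(s),v_s)$ that is \emph{not} dominated downstream: precisely, for each $s\triangleleft t$ with $f(s)\not\sqsubseteq f(t)$ choose $v_s$ witnessing this failure. Making these choices coherently across the barrier is where a combinatorial argument on barriers is needed; the clean way is to pass to the barrier $B^2$ of Proposition~\ref{lem6}, whose elements encode pairs $s\triangleleft t$ via $\pi_0,\pi_1$, and to define $g:B^2\to P(\bigcup\M)$ by $g(s\cup t)=v_s$, using property~\eqref{c} (namely $\pi_1(t)=\pi_0(t')$ when $t\triangleleft t'$) to guarantee that a good pair for $g$ with respect to $\ae$ would produce the matching witness $v'$ required to contradict $f(s)\not\sqsubseteq f(t)$. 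Since $B^2$ is smooth-barrier-controlled and $\ot(B^2)=\ot(B)\cdot\omega$ by property~\eqref{f}, the index shift from $\alpha$ to $\alpha\cdot\omega$ in Proposition~\ref{thm9} is exactly what the passage $B\rightsquigarrow B^2$ supplies, so the ordinal bookkeeping closes. Establishing that this $g$ is bad whenever $f$ is, and that a single $B^2$-level choice of witnesses suffices to defeat the universal quantifier in the definition of $\sqsubseteq$, is the crux of the proof.
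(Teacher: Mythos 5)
Your proposal is correct and follows essentially the same route as the paper's proof (Lemma~\ref{lem:main} together with Corollary~\ref{cor:a}): reduce via Theorem~\ref{lem:b} to showing $Q_R(\qo{\M})$ is a \textsc{bqo}, then, given a bad $f$ on a smooth barrier $B$, choose for each pair $s\triangleleft t$ a witness of $f(s)$ defeating all witnesses of $f(t)$, assemble these into a map $g$ on $B^2$ that is bad for $\ae$ via the identity $\pi_1(t)=\pi_0(t')$ from Proposition~\ref{lem6}, and contradict Proposition~\ref{thm9} using $\ot(B^2)=\ot(B)\cdot\omega$. The only cosmetic difference is that the paper runs the argument level-by-level ($\alpha\cdot\omega^2$-\textsc{wqo} in, $\alpha$-\textsc{wqo} out) rather than at the \textsc{bqo} level all at once, and your crux step (badness of $g$) is exactly the paper's.
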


To prove Theorem~\ref{main:bqo}, we have only to verify a following
lemma, in view of Corollary~\ref{cor:a}:

\begin{lemma}\label{lem:main}Let $\op$ be a
deformation from a set system $\M$ to a set system $\L$. 
If $\alpha$ is a countable infinite indecomposable ordinal, and $\qo{\M}$ is an
 $\alpha\cdot\omega^2$-\textsc{wqo}, then $\qo{\op[\M]}$ is an
 $\alpha$-\textsc{wqo}.\end{lemma}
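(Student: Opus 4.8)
The plan is to deduce Theorem~\ref{main:bqo} from the lemma exactly as the text indicates: if $\M$ is a \textsc{bess} then $\qo{\M}$ is a \textsc{bqo}, hence an $\alpha\cdot\omega^2$-\textsc{wqo} for every countable infinite indecomposable $\alpha$, so the lemma makes $\qo{\op[\M]}$ an $\alpha$-\textsc{wqo} for all such $\alpha$, whence a \textsc{bqo} by Corollary~\ref{cor:a}. To prove the lemma itself, first I would write $\op=\op_R$ via Lemma~\ref{lem:11} and apply Theorem~\ref{lem:b} to get $\op_R[\M]\subseteq\closure{Q_R(\qo{\M})}$. Writing $Q_R(\qo{\M})=(\bigcup\L,\ \sqsubseteq)$, this inclusion says every $\op_R(A)$ is $\sqsubseteq$-upper-closed, so for $x,x'\in\bigcup\op[\M]$ we have $x\sqsubseteq x'\Rightarrow x\preceq_{\op[\M]}x'$; that is, $\preceq_{\op[\M]}$ is a coarsening of the restriction of $\sqsubseteq$ to $\bigcup\op[\M]$. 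Since being an $\alpha$-\textsc{wqo} is inherited both by sub-quasi-orderings and by coarsenings (a map witnessing badness for the smaller ground set, or for the coarser relation, already witnesses it for the larger or finer one), it suffices to prove that $Q_R(\qo{\M})$ itself is an $\alpha$-\textsc{wqo}.

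Next I would exhibit $Q_R(\qo{\M})$ as a reduct of a twice-iterated powerset ordering of $\qo{\M}$. Put $\phi(x):=\{v\;;\;R(x,v)\}\in\power{\power{\bigcup\M}}$. Unwinding Definition~\ref{def:sqsubseteq}, $x\sqsubseteq x'$ holds precisely when every $v\in\phi(x)$ admits some $v'\in\phi(x')$ with $v\ae v'$; thus $\phi$ is an order embedding (indeed $x\sqsubseteq x'$ iff $\phi(x)$ lies below $\phi(x')$) of $(\bigcup\L,\ \sqsubseteq)$ into $\power{\power{\bigcup\M}}$ equipped with the powerset ordering built on the quasi-ordering $(\power{\bigcup\M},\ \ae)$. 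As $\phi$ reflects the order, it is enough to show that this twice-iterated powerset ordering of $\qo{\M}$ is an $\alpha$-\textsc{wqo}.

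For the ordinal bookkeeping I would apply the powerset step twice. Because $\alpha$ is countable, infinite and indecomposable, so are $\alpha\cdot\omega$ and $\alpha\cdot\omega^2$. Taking the indecomposable $\beta=\alpha\cdot\omega$ in Proposition~\ref{thm9}, the hypothesis that $\qo{\M}$ is a $(\beta\cdot\omega)=\alpha\cdot\omega^2$-\textsc{wqo} yields that $(\power{\bigcup\M},\ \ae)$ is an $(\alpha\cdot\omega)$-\textsc{wqo}; this is the \emph{inner} application, to the ordering $\ae$ exactly as stated. Taking $\beta=\alpha$ and feeding this back in is the \emph{outer} application, which should turn the $(\alpha\cdot\omega)$-\textsc{wqo} $(\power{\bigcup\M},\ \ae)$ into an $\alpha$-\textsc{wqo} on the second power, completing the two reductions above.

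The hard part will be the directionality at the outer level. The inner ordering $\ae$ of Definition~\ref{def:ae} quantifies as $\forall x'\in v'\,\exists x\in v$, whereas the outer comparison produced by $\phi$ quantifies as $\forall v\in\phi(x)\,\exists v'\in\phi(x')$, i.e.\ it is the powerset ordering of the opposite handedness. Only the inner nesting is literally the ordering $\ae$ to which Proposition~\ref{thm9} applies as written; the outer one requires the \emph{dual} statement, namely that the opposite-direction powerset ordering likewise sends a $(\beta\cdot\omega)$-\textsc{wqo} to a $\beta$-\textsc{wqo}. I expect to establish this dual with the same ordinal cost by rerunning the barrier argument underlying Proposition~\ref{thm9}, using the squared barrier $B^2$ and its properties from Proposition~\ref{lem6} (in particular $\ot(B^2)=\ot(B)\cdot\omega$ for indecomposable $\ot(B)$), rather than by any formal dualization, since reversing a quasi-ordering does not preserve \textsc{wqo}-ness.
\qed
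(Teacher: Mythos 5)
Your proposal is correct, and once the deferred ``dual'' step is unwound it coincides with the paper's argument; the difference is one of packaging. You reduce, exactly as the paper does, to showing that $Q_R(\qo{\M})=(\bigcup\L,\ \sqsubseteq)$ is an $\alpha$-\textsc{wqo} via Theorem~\ref{lem:b}, and you spend the ordinal budget in the same way, with a single genuine application of Proposition~\ref{thm9} taking $\alpha\cdot\omega^2$ down to $\alpha\cdot\omega$. Where you interpose the map $\phi(x)=\{v\;;\;R(x,v)\}$ into $\power{\power{\bigcup\M}}$ and postpone the opposite-handed outer comparison to a ``dual of Proposition~\ref{thm9},'' the paper skips the double powerset and runs that barrier argument directly on $\sqsubseteq$: from a bad $f:B\to\bigcup\L$ with $B$ smooth and $\ot(B)\le\alpha$, it defines $g(t)$ for $t\in B^2$ to be a witness $v$ with $R(f(\pi_0(t)),v)$ such that no $v'$ with $R(f(\pi_1(t)),v')$ satisfies $v\ae v'$; the chaining identity $\pi_1(t)=\pi_0(t')$ for $t\triangleleft t'$ (Proposition~\ref{lem6}~\eqref{c}) then makes $g$ bad for $\ae$ on the barrier $B^2$, whose order type is $\ot(B)\cdot\omega\le\alpha\cdot\omega$ by Proposition~\ref{cor3.5} and Proposition~\ref{lem6}~\eqref{f}, contradicting Proposition~\ref{thm9}. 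This is precisely the computation you anticipate: your observation that the $\forall\exists$ handedness is reversed at the outer level, that formal dualization is unavailable because reversing a quasi-ordering does not preserve \textsc{wqo}-ness, and that the witness must be extracted with the $B^2$ construction at cost exactly one factor of $\omega$, is the heart of the paper's proof (the only cosmetic difference being that the failing witness lives in the first coordinate of the comparison rather than the second). So your route buys a cleaner conceptual statement about iterated powerset orderings at the price of an extra abstraction layer that the paper avoids; to be complete you would still need to write out the $B^2$ argument you sketch, which is word for word the displayed construction above.
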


 \begin{proof} Write $\op$ as $\op_R$ for some relation $R\subseteq
  (\bigcup\L)\times P\left( \bigcup\M\right)$.
 Let the quasi-ordering $\qo{\M}$ be  $(\bigcup\M,\, \preceq)$ and 
$Q_R(\qo{\M})$  be $(\bigcup\L,\
  \sqsubseteq)$, as in Definition~\ref{def:sqsubseteq}. 
Assume $\qo{\op[\M]}$ is not an $\alpha$-\textsc{wqo}. 
By Theorem~\ref{lem:b} and (ii) in the first section,
\begin{align*}
\qo{\op\left[\M\right]}\supseteq \qo{\closure{Q_R(\qo{\M})}} =
  Q_R(\qo{\M}).
\end{align*}  
Since $\qo{\op[\M]}$ is not an $\alpha$-\textsc{wqo}, the quasi-ordering
$Q_R(\qo{\M})=(\bigcup\L,\ \sqsubseteq)$ is neither.  By Proposition~\ref{thm4}, there are smooth
barrier $B$ of $\ot(B)\le\alpha$ and a function $f:B\to\bigcup\op[\M]$
such that for all $u,v\in B$, $u\triangleleft v$ implies
$f(u)\not\sqsubseteq f(v)$. By Proposition~\ref{lem6}~\eqref{b}, for all
$t\in B^2$, we have $\pi_0(t), \pi_1(t)\in B$ and $\pi_0(t)\triangleleft
\pi_1(t)$. So $f\left(\pi_0(t)\right)\not\sqsubseteq
f(\left(\pi_1(t)\right))$ for all $t\in B^2$.  By
Definition~\ref{def:sqsubseteq}, for some $v$ with $R\left(
f\left(\pi_0(t)\right),\, v\right)$ and for all $v'$, if $ R\left(
f\left(\pi_1(t)\right),\, v'\right)$ then $ v\not\ae v'$.  For each $t$,
let $g(t)$ be one of such $v$. Then $g$ is a function from $B^2$ to
$P(\bigcup\M)$.

  Then for all $t,t\in B^2$, we have $g(t)\not\ae g(t')$ whenever
  $t\triangleleft t'$. In other words, the function  $g$  is bad with
  respect to the powerset ordering $\ae$. To verify it, 
  first recall
 $\pi_1(t)=\pi_0(t')$ by Proposition~\ref{lem6}~\eqref{c}. By the definition  of $g$, we have $g(t)\not\ae
 v'$ whenever
 $R\left(f\left(\pi_1(t)\right),\ v'\right)$. Moreover $R\left(
  f(\pi_0(t')),\ g(t')\right)$. Because Proposition~\ref{lem6}~\eqref{c}
  implies $\pi_1(t)=\pi_0(t')$, we have
  $R\left(f\left(\pi_1(t)\right),\, g(t')\right)$. Therefore, $g(t)\not\ae g(t')$.

By Proposition~\ref{cor3.5} and Proposition~\ref{lem6}~\eqref{f},
 $\ot(B^2)=\ot(B)\cdot\omega\le \alpha\cdot\omega$. Because $B^2$ is a
 barrier by Proposition~\ref{lem6}~\eqref{e}, $(P(\bigcup\L),\ \ae)$ is not an
 $(\alpha\cdot\omega)$-\textsc{wqo}, which contradicts
 Proposition~\ref{thm9}.\qed
 \end{proof}

A following immediate corollary of
Theorem~\ref{main:bqo} may be useful in developing computational learning
theory of $\omega$-languages~\cite{267871}: Let  $\Sigma^\infty$ be a set of possibly
 infinite sequences of elements in an alphabet $\Sigma$, and $\L$ be a set system over
 $\Sigma^\infty$. The concatenation operation of two sequences is
 defined similarly as that of two finite sequences except that
for an infinite sequence $u$ and
 sequence $v$, the concatenation $u v$ is defined as $u$.
For  $L\subseteq\Sigma^\infty$, 
the \emph{$\omega$-closure}~\cite{267871} $L^\omega$ of $L$
 is the set of infinitely iterated
 concatenation $u_1 u_2 u_3 \cdots$ of sequences $u_1,u_2,\ldots\in L$.

\begin{corollary}[Closure of $\omega$-languages] 
If $\L$ is a \textsc{bess},  so are following
 classes:
\begin{enumerate}
\item $\{ L^\omega\;;\; L\in \L\}$.
\item 
$\{ L^\mathrm{sh}\;;\; L\in \L\}$. Here $L^\mathrm{sh}$ is the
      shuffle-closure 
$\{\varepsilon\}\cup L\cup (L\diamond L)\cup((L\diamond
L)\diamond L) \cup\cdots$, and 
$L\diamond L'$ is the set of $  u_1 v_1 u_2 v_2 \cdots$ such that
$u_1 u_2 \cdots \in L$, $v_1 v_2 \cdots \in L'$ and  $u_i, v_i \in
      \Sigma^*$ ($i\ge 1$).
\end{enumerate}
\end{corollary}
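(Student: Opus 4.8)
The plan is to realize each of the two elementwise operations as the image of a single \emph{deformation} of set systems and then invoke Theorem~\ref{main:bqo}. Since $\L\subseteq P(\Sigma^\infty)$ with $\bigcup\L\subseteq\Sigma^\infty$, I would regard each operation as a function on the ambient Cantor space $\{0,1\}^{\Sigma^\infty}=P(\Sigma^\infty)$, check that it is monotone (hence a deformation by Lemma~\ref{lem:11}), and observe that its restriction to $\L$ produces exactly the claimed class. Then, because $\L$ is a \textsc{bess}, Theorem~\ref{main:bqo} yields that the image is again a \textsc{bess}.

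For part~(1), define $\op_\omega:P(\Sigma^\infty)\to P(\Sigma^\infty)$ by $\op_\omega(M):=M^\omega$, so that $\op_\omega[\L]=\{L^\omega\;;\;L\in\L\}$. Monotonicity is immediate: if $M\subseteq M'$ and $w=u_1u_2u_3\cdots\in M^\omega$ with each $u_i\in M$, then each $u_i\in M'$, whence $w\in (M')^\omega$, so $\op_\omega(M)\subseteq\op_\omega(M')$. For part~(2) I would first record that the shuffle product $\diamond$ is monotone in each argument --- if $u_1u_2\cdots\in L_1\subseteq L_1'$ and $v_1v_2\cdots\in L_2\subseteq L_2'$ then the same witnesses show $u_1v_1u_2v_2\cdots\in L_1'\diamond L_2'$ --- so by induction every iterated shuffle product $L^{\diamond n}$ is monotone in $L$, and hence $\op_{\mathrm{sh}}(M):=\{\varepsilon\}\cup M\cup(M\diamond M)\cup\cdots$, being a union of monotone maps together with the constant $\{\varepsilon\}$, is monotone. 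Thus $\op_{\mathrm{sh}}$ is a deformation with $\op_{\mathrm{sh}}[\L]=\{L^{\mathrm{sh}}\;;\;L\in\L\}$, and Theorem~\ref{main:bqo} finishes the argument.

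Neither step involves a genuine difficulty; the only point worth flagging is \emph{why} these deformations cannot be handled by the \textsc{fess} closure result Proposition~\ref{prop:u}. Both $M\mapsto M^\omega$ and $M\mapsto M^{\mathrm{sh}}$ are \emph{infinitary}: membership of a word in $M^\omega$ may require infinitely many factors from $M$, so by the criterion in Lemma~\ref{lem:11}(2) no associated relation $R$ can have all its witnesses $v$ finite, and the maps fail to be continuous. It is precisely this infinitary character that makes the stronger \textsc{bess} machinery of Theorem~\ref{main:bqo} necessary, which is the whole point of isolating the corollary. A minor bookkeeping matter, harmless to monotonicity, is the stated convention that $uv=u$ when $u$ is infinite, which only truncates the witnessing factorizations and never enlarges the image of a smaller set.
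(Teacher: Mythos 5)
Your proof is correct and matches the paper's intent: the paper states this result as an ``immediate corollary'' of Theorem~\ref{main:bqo} with no further argument, the point being exactly that $M\mapsto M^\omega$ and $M\mapsto M^{\mathrm{sh}}$ are monotone, hence deformations, so the \textsc{bess} closure theorem applies. Your remark that these maps are genuinely infinitary (non-continuous), so that Proposition~\ref{prop:u} would not suffice, is also precisely the motivation the paper gives for isolating this corollary.
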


\section{Linearizations of set systems and powerset orderings}\label{sec:linearization}

Many study on \textsc{wqo}s use de Jongh-Parikh's
 theorem~\cite{MR0447056}:
``The order-type
$\otp(\X)$ of a \textsc{wqo} $\X$ is the \emph{maximum} of order-types of the
linearizations of $\X$.'' 

We wish to require that if a linear order $\Y$ is a linearization of a
quasi-ordering of $\X$, then $\closure{\Y}$ is a `linearization' of a
$\closure{\X}$. Then a `linearization' of a set system $\L\subseteq P(T)$
should be a set system $\M\subseteq P(T)$ linearly ordered by set
inclusion and $\M$ consists of unions of members
of $\L$ such that for any $L\in \L$ there exists a subfamily
$\L'\subseteq\L$ such that $L\in \L'$ and $\bigcup\L'\in \M$.

\begin{lemma}\label{lem:linear}For any $\textsc{fess}$ $\L$, if $\qo{\L}$ is
 a \textsc{wqo} then the class $\hat{\L}:=\{ \ \bigcup \M\ ;\  \emptyset\ne\M\subseteq \L\
\}$ containing $\L$ closed under arbitrary unions is an \textsc{fess}. 

\end{lemma}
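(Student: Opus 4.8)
The plan is to argue by contradiction through \emph{infinite learning sequences}, using the characterization (Definition~\ref{def:dimfess}, and the argument already used to show every \textsc{bess} is an \textsc{fess}) that a set system fails to be an \textsc{fess} exactly when it admits an infinite learning sequence. So I would suppose $\hat{\L}$ is not an \textsc{fess}, fix such a sequence, and extract from ``$\qo{\L}$ is a \textsc{wqo}'' a pair of data points that the sequence forbids. The containment $\L\subseteq\hat{\L}$ (use singleton subfamilies $\M=\{L\}$) and closure of $\hat{\L}$ under arbitrary nonempty unions are immediate and can be dispatched first.

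First I would unfold the assumed infinite learning sequence $\langle\langle t_0,\hat{L}_1\rangle,\langle t_1,\hat{L}_2\rangle,\ldots\rangle$ of $\hat{\L}$: data $t_0,t_1,\ldots$ and hypotheses $\hat{L}_1,\hat{L}_2,\ldots\in\hat{\L}$ with $t_i\in\hat{L}_j$ whenever $i<j$, and $t_j\notin\hat{L}_j$. By the definition of $\hat{\L}$ each hypothesis is a union $\hat{L}_j=\bigcup\M_j$ with $\emptyset\ne\M_j\subseteq\L$; and since $t_i\in\hat{L}_{i+1}\subseteq\bigcup\L$, every datum lies in the carrier of $\qo{\L}=(\bigcup\L,\ \preceq_\L)$.

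Next I would apply the \textsc{wqo} hypothesis to the infinite sequence $t_0,t_1,\ldots$ in the ``good pair'' form: it produces $i<j$ with $t_i\preceq_\L t_j$. The final step reads off a single separating member of $\L$. From $t_i\in\bigcup\M_j$ I choose $L\in\M_j\subseteq\L$ with $t_i\in L$, and note $t_j\notin L$ because $t_j\notin\hat{L}_j=\bigcup\M_j\supseteq L$. This one $L$ shows $t_i\not\preceq_\L t_j$, contradicting the good pair. Hence no infinite learning sequence exists and $\hat{\L}$ is an \textsc{fess}.

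The point I expect to matter most — and where the argument is cleaner than one might anticipate — is that a \emph{single} $L\in\M_j$ already refutes $t_i\preceq_\L t_j$: one never needs a member of $\L$ covering all earlier data at once, so it is the good-pair property of the \textsc{wqo} $\qo{\L}$, rather than the finite elasticity of $\L$, that drives the contradiction. (Finite elasticity of $\L$ is then recovered for free, since $\L\subseteq\hat{\L}$ and sub-systems of \textsc{fess}s are \textsc{fess}s.) The only step deserving explicit care is justifying the good-pair characterization of \textsc{wqo}s for quasi-orderings, i.e.\ that having no infinite antichain and no infinite descending chain yields $i<j$ with $t_i\preceq_\L t_j$ for every infinite sequence; this is the standard Ramsey-type argument used elsewhere in the paper.
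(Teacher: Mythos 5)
Your proof is correct and is essentially the paper's argument unfolded: the paper observes that every $\bigcup\M\in\hat{\L}$ is upper-closed with respect to $\qo{\L}$ (which is exactly your ``single $L\in\M_j$ suffices'' step) and then invokes the fact that $\closure{\qo{\L}}$ is an \textsc{fess} whenever $\qo{\L}$ is a \textsc{wqo}, which is precisely your good-pair contradiction. Both versions, as you note, never actually use the hypothesis that $\L$ itself is an \textsc{fess}.
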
 
\begin{proof}Since each member $\bigcup\M\in \hat\L$ is upper-closed
 with respect to the \textsc{wqo} $\qo{\L}$, any learning sequences~(see Definition~\ref{def:dimfess}) of the set system $\hat{\L}$ are
 those of the set system $\closure{\qo{\L}}$. By the premise $\qo{\L}$ is a \textsc{wqo}, so 
$\closure{\qo{\L}}$ is an \textsc{fess}. Therefore any learning
 sequence should be finite. Hence $\hat \L$ is an \textsc{fess}.
\qed\end{proof}

\begin{q}Define a suitable  linearization of a set system.
Do we have
\begin{align*}
 \dim \X = \max\left\{ \dim \Y\;;\; \Y\ \mbox{is a linearization of
 }\X\right\}
\end{align*}
for any set system $\X$ ?
\end{q}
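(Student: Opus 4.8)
The plan is to first fix the notion of linearization already sketched in the paragraph preceding the conjecture, then to reduce the desired equality to de Jongh--Parikh's theorem through the correspondence $(\closure{\bullet},\ \qo{\cdot})$, and finally to isolate the genuinely new content, which lies in the case where $\L$ is an \textsc{fess} but $\qo{\L}$ fails to be a \textsc{wqo}. Concretely, I would call a set system $\M\subseteq P(T)$ a \emph{linearization} of $\L$ when (a) $\M$ is totally ordered by set inclusion, (b) each $M\in\M$ is a union $\bigcup\L'$ of some subfamily $\L'\subseteq\L$, and (c) for every $L\in\L$ there is $\L'\subseteq\L$ with $L\in\L'$ and $\bigcup\L'\in\M$. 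This is exactly the description given before the conjecture; it forces $\M$ to be a chain inside $\hat\L$ that refines $\L$, and it is compatible with the remark that a linearization should be a subfamily of $\finclass{\L}$.

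First I would settle the \emph{saturated} case, in which $\qo{\L}$ is a \textsc{wqo} and $\L=\closure{\qo{\L}}$, so that $\dim\L=\otp(\qo{\L})$ by (ii) of the introduction. For any linearization $\Y=(\bigcup\L,\ \le)$ of the \textsc{wqo} $\qo{\L}$, the set system $\closure{\Y}$ of upper-closed sets is a chain under inclusion (the upper-closed subsets of a linear well-order are precisely its tails), each such tail is a union of members of $\closure{\qo{\L}}=\L$, and every member of $\L$ occurs in such a union; hence $\closure{\Y}$ is a linearization of $\L$ in the above sense and $\dim\closure{\Y}=\otp(\Y)$. De Jongh--Parikh's theorem $\otp(\qo{\L})=\max_\Y\otp(\Y)$ then gives $\dim\L=\max_\Y\dim\closure{\Y}$. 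To upgrade ``$\max_\Y$'' to ``$\max$ over all linearizations'' I would prove the monotonicity inequality $\dim\M\le\dim\L$ for an arbitrary linearization $\M$: since $\M$ is a chain, $\qo{\M}$ is a linear well-order of type $\dim\M$, and the union map $\L'\mapsto\bigcup\L'$ carries any learning sequence of $\M$ to one of $\L$ of at least the same rank, by selecting, for each hypothesis $M_{j+1}=\bigcup\L'_{j+1}$, a witness $L\in\L'_{j+1}$ containing the current datum. This half is routine and parallels Lemma~\ref{lem:linear} and Proposition~\ref{prop:u}.

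The hard part, and the reason the statement is posed as a conjecture rather than proved as a lemma, will be the general \textsc{fess} $\L$ for which $\qo{\L}$ is \emph{not} a \textsc{wqo} and $\L\subsetneq\closure{\qo{\L}}$: there is then no ambient \textsc{wqo} on which to run de Jongh--Parikh, and one must argue directly on the well-founded game tree, whose rank is $\dim\L$, to manufacture a single chain of unions realizing that rank. The natural attempt is to follow a branch of locally maximal rank and, at limit stages, to take the union of the hypotheses accumulated along it, producing a descending chain whose learning game has the same height; but it is exactly here that I expect trouble, since it is unclear that this family still satisfies the covering condition (c), stays inside $\hat\L$, and preserves the rank rather than dropping it at limits. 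Making the definition of linearization robust enough that the attained maximum survives outside the saturated case is the decisive obstacle.
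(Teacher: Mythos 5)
This statement is posed in the paper as an open \emph{conjecture} (the environment \texttt{q} is ``Conjecture''), so there is no proof in the paper to compare against; the paper only motivates it by analogy with de Jongh--Parikh's theorem and observes that a ``linearization'' should live inside $\finclass{\X}$. Your proposal does not close the conjecture either, and you say so yourself: the decisive case, an \textsc{fess} $\L$ with $\qo{\L}$ not a \textsc{wqo} (or $\L\subsetneq\closure{\qo{\L}}$), is exactly where your construction along a maximal-rank branch is left unjustified at limit stages, and the very first task of the conjecture --- fixing a definition of linearization that makes the equality even have a chance --- is part of what is open. Treating the paper's informal description before the conjecture as a definition is reasonable, but it is a choice, not a resolution.

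Beyond the admitted gap, one step you label ``routine'' is not, and the argument you give for it fails. To show $\dim\M\le\dim\L$ for a chain $\M\subseteq\hat\L$, you propose to convert a bad learning sequence of $\M$ into one of $\L$ by choosing, for each hypothesis $M_{j+1}=\bigcup\L'_{j+1}$, some $L\in\L'_{j+1}$ containing the \emph{current} datum $t_j$. But a learning sequence requires the hypothesis to contain \emph{all} previously presented data $\{t_0,\dots,t_j\}$, and a single member of $\L'_{j+1}$ covering $t_j$ need not cover the earlier $t_i$. This is precisely the phenomenon behind the paper's Example with $\L_1$ and Theorem~\ref{thm:a}: passing from unions back to single members of $\L$ is the hard content, and $\finclass{\L}$ can fail to be an \textsc{fess} even when $\L$ is one. (For chains one can at least note that along a bad learning sequence the hypotheses strictly increase under inclusion, which suggests a different route via ascending chains in $\hat\L$, but that is not the argument you gave.) The saturated case via $\closure{\Y}$ and de Jongh--Parikh is plausible and matches the paper's stated motivation, but as it stands your text is a partial reduction plus an identification of obstacles, not a proof of the conjecture.
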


We characterize the set systems $\L$ such that the set system of
arbitrary (finite) unions of members of $\L$ is an \textsc{fess}, from
viewpoint of \textsc{wqo}s, \textsc{bqo}s, and the powerset ordering
$\ae$.

\begin{theorem}\label{thm:a}
For any set system $\L$, the following are equivalent:
\begin{enumerate}
\item \label{thm:a:1} 
the quasi-ordering $\qo{\L}=(\bigcup\L,\, \le)$  satisfies the condition
      \eqref{prop:rado:1} of
      Lemma~\ref{lem:rado}.
Namely,
\begin{enumerate}
\item $\qo{\L}$ is a \textsc{wqo}; and
\item Let $F$ be any function from $[\omega]^2$ to $\bigcup\L$. Then if
      $F(\{i,j\})<F(\{i,j+1\})$ for any $i<j<\omega$, then there are
      $i<j<k<\omega$ such that $F(\{i,j\})<F(\{j,k\})$.
\end{enumerate}

\item \label{thm:a:2}
$\finclass{\L}$ is an \textsc{fess}.

\item \label{thm:a:3}
$\hat\L$ is an \textsc{fess}.
\end{enumerate} 
\end{theorem}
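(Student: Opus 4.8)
<br>

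The plan is to prove the cycle of implications $\eqref{thm:a:1}\Rightarrow\eqref{thm:a:3}\Rightarrow\eqref{thm:a:2}\Rightarrow\eqref{thm:a:1}$, leveraging Lemma~\ref{lem:rado} to translate condition \eqref{thm:a:1} into the statement that $(\power{\bigcup\L},\,\ae)$ is a \textsc{wqo} (equivalently that $\qo{\L}$ is an $\omega^2$-\textsc{wqo}). The key observation tying everything together is that each member of $\hat\L$ and of $\finclass{\L}$ is an upper-closed set with respect to $\qo{\L}$ (when $\qo{\L}$ is a \textsc{wqo}), so the relevant set systems sit inside $\closure{\qo{\L}}$-like structures and their learning sequences can be analyzed through the powerset ordering.

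For $\eqref{thm:a:1}\Rightarrow\eqref{thm:a:3}$, I would note that condition \eqref{thm:a:1}(a) gives that $\qo{\L}$ is a \textsc{wqo}, so Lemma~\ref{lem:linear} immediately yields that $\hat\L$ is an \textsc{fess}; this direction is essentially free. The implication $\eqref{thm:a:3}\Rightarrow\eqref{thm:a:2}$ should follow because $\finclass{\L}\subseteq\hat\L$ (finite unions are a special case of arbitrary unions), and a sub-set-system of an \textsc{fess} is again an \textsc{fess}, since any learning sequence for the smaller system is a learning sequence for the larger one and hence must terminate. I would state this monotonicity of $\dim$ under inclusion of set systems as the mechanism.

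The substantive direction is $\eqref{thm:a:2}\Rightarrow\eqref{thm:a:1}$, and this is where I expect the main obstacle. The strategy is contrapositive: assuming \eqref{thm:a:1} fails, I want to manufacture an infinite learning sequence for $\finclass{\L}$, thereby showing $\finclass{\L}$ is not an \textsc{fess}. By Lemma~\ref{lem:rado}, the negation of \eqref{thm:a:1} means $(\power{\bigcup\L},\,\ae)$ is \emph{not} a \textsc{wqo}, so there is either an infinite antichain or an infinite descending chain for $\ae$ in $\power{\bigcup\L}$. The crux is to convert a bad sequence $v_0,v_1,v_2,\ldots$ for $\ae$ into an infinite learning sequence: the idea is that the finite unions $\bigcup v_i$ (after passing to finite subsets, using the finite-basis property of \textsc{wqo}s to reduce each $v_i$ to a finite set generating the same upper-closed set) give members of $\finclass{\L}$ whose upper-closures are pairwise incomparable in the way required by a learning sequence, letting Teacher present witnesses that repeatedly force Learner to revise. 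I would use the characterization of $\ae$ (Definition~\ref{def:ae}) to extract, from the failure of $v_i\ae v_j$, precisely the witnessing elements that serve as Teacher's data points $t_i$, arranging that each $t_i$ lies in the later unions but not the earlier ones.

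The delicate point is the bookkeeping: I must ensure the constructed sequence genuinely satisfies the alternation condition of a learning sequence (Definition~\ref{def:dimfess}) rather than merely being an infinite bad sequence for $\ae$, and in particular handle the two cases (antichain versus descending chain) uniformly, perhaps by first reducing via the equivalence $\eqref{prop:rado:3}\Leftrightarrow\eqref{prop:rado:4}$ in Lemma~\ref{lem:rado} to working with $\inhfin{\bigcup\L}$, i.e. with genuinely finite sets, so that the unions live in $\finclass{\L}$ by construction. I expect that the finiteness afforded by passing to $\inhfin{\bigcup\L}$ is exactly what makes the learning sequence well-defined and confines us to $\finclass{\L}$ rather than $\hat\L$, which is why condition \eqref{prop:rado:3} of Lemma~\ref{lem:rado}, rather than \eqref{prop:rado:4}, is the right tool for this direction.
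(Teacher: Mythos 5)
Your handling of the easy directions is sound but genuinely different from the paper's: the paper never invokes Lemma~\ref{lem:linear} nor the inclusion $\finclass{\L}\subseteq\hat\L$. Instead it converts an infinite bad learning sequence $\left\langle\langle t_0,A_1\rangle,\langle t_1,A_2\rangle,\dots\right\rangle$ directly into a bad sequence for $\ae$ by taking the initial segments $v_n=\{t_0,\dots,t_{n-1}\}\subseteq A_n\not\ni t_n$ and checking $v_n\not\ae v_m$ for $n<m$, then applies Lemma~\ref{lem:rado}; this one construction covers both $\finclass{\L}$ and $\hat\L$ at once. Your route via Lemma~\ref{lem:linear} needs the additional (true, but unstated) observation that (1)(a) already forces $\L$ to be an \textsc{fess}, since $\L\subseteq\closure{\qo{\L}}$ and the latter is an \textsc{fess} when $\qo{\L}$ is a \textsc{wqo}.

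The genuine gap is exactly at the point you defer as ``delicate bookkeeping'' in $(2)\Rightarrow(1)$, and it is not a bookkeeping issue. The failure of $v_i\ae v_j$ supplies, for each \emph{pair} $i<j$, some element of $v_j$ with no $\preceq$-predecessor in $v_i$; a bad learning sequence requires a \emph{single} datum $t_j\in v_j$ that works simultaneously for every $i<j$, so that $t_j$ avoids the whole hypothesis $\bigcup_{i<j}L_{i,j}$ while each $L_{i,j}$ contains $t_i$. Nothing in the non-well-quasi-orderedness of $\ae$ supplies this uniformity in $i$, and passing from $\power{\bigcup\L}$ to $\inhfin{\bigcup\L}$ via Lemma~\ref{lem:rado}\eqref{prop:rado:3} does not address it — finiteness of the $v_j$ was never the obstruction. (The paper's own proof of this direction has the identical defect: its witness ``$y^j$'' silently depends on $i$ as well as $j$.) Worse, the uniformization is impossible in general: for Rado's \textsc{wqo} $\X$ one has $\finclass{\closure{\X}}=\widehat{\closure{\X}}=\closure{\X}$, an \textsc{fess}, while $\qo{\closure{\X}}=\X$ violates (1)(b); the paper's Lemma~\ref{lem:1} exhibits the same phenomenon with $\L=\ppf{(\L_1,\supseteq)}$, whose $\finclass{\L}$ is an \textsc{fess} even though $\qo{\L}=(\L_1,\supseteq)$ fails Rado's condition. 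You could have detected the problem from inside your own plan: your $(1)\Rightarrow(3)$ uses only (1)(a), so if your cycle closed, (1)(b) would follow from (1)(a) for every quasi-ordering of the form $\qo{\L}$ — that is, for every quasi-ordering, by Fact~\ref{fact:repre}\eqref{qo:ppf} — contradicting Rado's example. So the implication you (and the paper) need here cannot be proved, and the redundancy of (1)(b) in your argument was the warning sign.
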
 
\begin{proof} Write $\qo{\L}=(\bigcup\L,
 \preceq)$. Assume the condition~\eqref{thm:a:2} is false. Then there 
 are an infinite sequence of $\M_n\subseteq \finclass{\L}$
 $(n=1,2,\ldots)$ and an infinite sequence $x_n\in \bigcup\L$ ($n=1,2,\ldots$) such
 that 
\begin{align}
 v_n :=\{x_1,\ldots, x_{n-1}\} \subseteq \bigcup \M_n \not\ni x_n. \label{u}
\end{align} 
If there are $n<m$ such that $v_n \ae v_m$, then $x_i\preceq x_{m-1}$  for
 some $i<n$. However, \eqref{u} implies $x_i \in \bigcup \M_{m-1}\not\ni
 x_{m-1}$. 
By the definition of $\preceq$,  $x_i\preceq x_{m-1}$ implies 
$x_i \in \bigcup \M_{m-1}\ni
 x_{m-1}$. A contradiction. Thus the powerset ordering $(P(\bigcup\L),
 \ae)$ is not a \textsc{wqo}. By Lemma~\ref{lem:rado},  the condition
 \eqref{prop:rado:1} is false. 

Conversely, assume the
 condition~\eqref{thm:a:1} is false. Since Lemma~\ref{lem:rado}
 implies that  the condition~\eqref{thm:a:1}
 is equivalent to the well-quasi-orderedness of the powerset ordering
 $\ae$, we have an infinite sequence $(v_i)_i$ such that for all $i<j$,
 $v_i\not\ae v_j$ but $v_i\subseteq \bigcup\L$. Then there is $y^j\in
 v_j$ such that for all $y^i \in v_i$ we have $y^i\not\preceq y^j$.
By the definition of $\preceq$, there is a sequence $(L_{i,j})_{i<j}$
 such that $y^i\in L_{i,j}\not\ni y^j$. Hence the sequence $\left(
 \bigcup_{i<j} L_{i,j}\right)_j$ is an infinite learning sequence in
 $\finclass{\L}$. So $\finclass{\L}$ is not an \textsc{fess}. Thus the condition~\eqref{thm:a:3} is false.

The equivalence between the condition~\eqref{thm:a:1} and the
 condition~\eqref{thm:a:2} can be similarly proved.\qed
\end{proof} 

\begin{example}
By \cite{114871}\cite{93373}, we have
an \textsc{fess} 
\begin{align*}
\L_1:=\left\{ \{i\}\cup \{k\;;\; k\ge j\}\ ;\ i,j\in\omega\right\},
\end{align*}
because it is the memberwise union of an \textsc{fess}
$\{\ \{k\in \omega\;;\; k\ge j\}\ ;\ j\in\omega\,\}$ and an
 \textsc{fess} $\SGL$.
 But $\Finclass{\L_1}$ is
 not an \textsc{fess} according to
 \cite[Proposition~2.1.27]{brecht09:_topol_and_algeb_aspec_of}.  The
 last assertion is an easy corollary of 
 Theorem~\ref{thm:a}, 
 because $\qo{\L_1}=(\omega, =)$ and is not a \textsc{wqo}.
\end{example}

\begin{corollary} \label{cor:b} If $\L$ is a \textsc{bess}, then both of
 $\finclass{\L}$ and $\hat \L$ are \textsc{fess}s.
\end{corollary}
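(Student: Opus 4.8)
The plan is to reduce the corollary entirely to the already-proved Theorem~\ref{thm:a} by checking that its condition~\eqref{thm:a:1} is forced by the \textsc{bess} hypothesis. First I would unfold the definition: saying that $\L$ is a \textsc{bess} means precisely that $\qo{\L}$ is a \textsc{bqo}. By Definition~\ref{def3}, a \textsc{bqo} is an $\alpha$-\textsc{wqo} for \emph{every} countable ordinal $\alpha$. Since $\omega^2$ is a countable ordinal, $\qo{\L}$ is in particular an $\omega^2$-\textsc{wqo}, which is exactly condition~\eqref{prop:rado:2} of Lemma~\ref{lem:rado}.

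Next I would invoke the equivalences packaged in Lemma~\ref{lem:rado}: its conditions \eqref{prop:rado:1} through \eqref{prop:rado:4} are all equivalent, so in particular \eqref{prop:rado:2} implies \eqref{prop:rado:1}. Thus $\qo{\L}$ satisfies~\eqref{prop:rado:1}, which is word-for-word the condition~\eqref{thm:a:1} of Theorem~\ref{thm:a}. Applying Theorem~\ref{thm:a}, whose conditions \eqref{thm:a:1}, \eqref{thm:a:2}, \eqref{thm:a:3} are equivalent, I conclude from~\eqref{thm:a:1} that both $\finclass{\L}$ and $\hat{\L}$ are \textsc{fess}s, as claimed.

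There is essentially no real obstacle here: the corollary is a direct chaining of Definition~\ref{def3}, Lemma~\ref{lem:rado}, and Theorem~\ref{thm:a}. The one substantive observation, which I would state explicitly to make the argument airtight, is that $\omega^2$ is a countable ordinal, so the \textsc{bqo} property of $\qo{\L}$ (which a priori gives $\alpha$-\textsc{wqo}-ness only for each countable $\alpha$) specializes at $\alpha=\omega^2$ to precisely the $\omega^2$-\textsc{wqo} hypothesis needed to enter Lemma~\ref{lem:rado}. Everything else is bookkeeping among the stated equivalences.
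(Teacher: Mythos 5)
Your argument is correct and is essentially identical to the paper's own proof: both unfold the \textsc{bess} definition to get that $\qo{\L}$ is a \textsc{bqo}, specialize to the $\omega^2$-\textsc{wqo} property, pass through the equivalences of Lemma~\ref{lem:rado} to obtain condition~\eqref{thm:a:1}, and conclude via Theorem~\ref{thm:a}. Your explicit remark that $\omega^2$ is countable is a small but welcome clarification of a step the paper leaves implicit.
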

\begin{proof}As $\L$ is a \textsc{bess}, the quasi-ordering $\qo{\L}$
 is a \textsc{bqo} by the definition, and hence is an
 $\omega^2$-\textsc{wqo}, by the definition of \textsc{bqo}. 
 By Lemma~\ref{lem:rado}, we have the condition~\eqref{thm:a:1} of
 Theorem~\ref{thm:a} and thus the desired conclusions.\qed\end{proof}

As the class of \textsc{bqo}s enjoys the closure properties with
respect to possibly infinitary constructions, we conjecture a following:

\begin{q}If $\L$ is a \textsc{bess}, then both of
 $\finclass{\L}$ and $\hat \L$ are \textsc{bess}s.\end{q}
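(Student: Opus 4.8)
The plan is to run the strategy of Theorem~\ref{thm:a} at the \textsc{bqo} level, where it becomes much cleaner because \textsc{bqo}s, unlike \textsc{wqo}s, are already closed under the powerset ordering $\ae$ (\cite[Corollary~10]{MR1884426}). The first and decisive step is the elementary observation that passing to finite or to arbitrary unions leaves the induced quasi-ordering untouched: a point lies in $\bigcup\M$ exactly when it lies in some member of $\M$, so a direct check on the definition of $\preceq$ (using that every $L\in\L$ is itself a member of both $\finclass{\L}$ and $\hat\L$ for the reverse inclusion) gives $\bigcup\finclass{\L}=\bigcup\hat\L=\bigcup\L$ together with
\begin{align*}
\qo{\finclass{\L}}=\qo{\hat\L}=\qo{\L}.
\end{align*}
Since a set system is a \textsc{bess} precisely when its induced quasi-ordering is a \textsc{bqo}, this reduces the conjecture to the single statement that $\qo{\L}$ being a \textsc{bqo} forces $\qo{\finclass{\L}}$ and $\qo{\hat\L}$ to be \textsc{bqo}s --- which now holds by the displayed equality.

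What makes the \textsc{bqo} case collapse, whereas Theorem~\ref{thm:a} had to impose the strictly stronger $\omega^2$-\textsc{wqo} hypothesis, is the equivalence
\begin{align*}
\qo{\L}\ \text{a \textsc{bqo}}\quad\Longleftrightarrow\quad \bigl(\power{\bigcup\L},\ \ae\bigr)\ \text{a \textsc{bqo}}.
\end{align*}
Right-to-left is immediate: $x\mapsto\{x\}$ is an order embedding of $\qo{\L}$ into $(\power{\bigcup\L},\ \ae)$, since $\{x\}\ae\{y\}\iff x\preceq y$, and a sub-quasi-ordering of a \textsc{bqo} is a \textsc{bqo}. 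Left-to-right is the closure of \textsc{bqo}s under $\ae$: a \textsc{bqo} $\qo{\L}$ is in particular an $(\alpha\cdot\omega)$-\textsc{wqo} for every countable infinite indecomposable $\alpha$, so Proposition~\ref{thm9} makes $(\power{\bigcup\L},\ \ae)$ an $\alpha$-\textsc{wqo} for every such $\alpha$, hence a \textsc{bqo} by Corollary~\ref{cor:a}. Thus the two hypotheses that Lemma~\ref{lem:rado} keeps strictly apart for \textsc{wqo}s --- $\qo{\L}$ a \textsc{wqo} versus $(\power{\bigcup\L},\ \ae)$ a \textsc{wqo}, the latter being the $\omega^2$-\textsc{wqo} condition --- fuse into one for \textsc{bqo}s, and this fusion is exactly what closes the gap between Corollary~\ref{cor:b} (which only delivers \textsc{fess}) and the conjecture.

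The only place where real work would be needed is if one refused the reduction above and demanded an intrinsic proof, characterising ``$\finclass{\L}$ is a \textsc{bess}'' directly by the badness of maps on barriers in the manner of Theorem~\ref{thm:a} rather than through $\qo{\finclass{\L}}$. One would then have to replay the bad-array construction of Lemma~\ref{lem:main}: from a bad map $f\colon B\to\bigcup\finclass{\L}$ on a smooth barrier $B$, extract via $\pi_0,\pi_1$ of Proposition~\ref{lem6} a map $g\colon B^2\to\power{\bigcup\L}$ that is bad for $\ae$, and use $\ot(B^2)=\ot(B)\cdot\omega$ (Propositions~\ref{cor3.5} and~\ref{lem6}) to match the ordinal levels. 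This transfinite bookkeeping is the delicate part, and it is precisely what the membership identity $\qo{\finclass{\L}}=\qo{\L}$ lets us bypass.
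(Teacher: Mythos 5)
The paper offers no proof of this statement---it is posed as an open conjecture---so there is nothing to compare your argument against; I can only assess it on its own terms. Your first paragraph is correct and, under the paper's literal definitions, it already settles the conjecture affirmatively. Since every $L\in\L$ equals $\bigcup\{L\}$ and every point of a union $\bigcup\M$ lies in some member of $\M$, one checks directly that $\bigcup\finclass{\L}=\bigcup\hat\L=\bigcup\L$ and that $x\preceq y$ holds in $\qo{\L}$ if and only if it holds in $\qo{\finclass{\L}}$ if and only if it holds in $\qo{\hat\L}$: the three quasi-orderings are literally the same object. As a \textsc{bess} is \emph{defined} by the condition that $\qo{\L}$ be a \textsc{bqo}, the conclusion follows with no further input---not even the closure of \textsc{bqo}s under $\ae$ is needed. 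For that reason your middle paragraph (the equivalence of $\qo{\L}$ being a \textsc{bqo} with $\left(\power{\bigcup\L},\ \ae\right)$ being one) is correct but plays no role and somewhat misdescribes why the argument works: it is not that \textsc{bqo}s absorb the powerset ordering, but that the operations $\L\mapsto\finclass{\L}$ and $\L\mapsto\hat\L$ do not change $\qo{\cdot}$ at all. Note that the same identity, combined with the paper's lemma that every \textsc{bess} is an \textsc{fess}, also yields a one-line proof of Corollary~\ref{cor:b} that bypasses Theorem~\ref{thm:a} and Lemma~\ref{lem:rado} entirely.

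The one caveat worth recording is that the triviality of this resolution is a symptom of the definition rather than a defect of your argument. \textsc{fess}-ness is an intrinsic property of the set system, stated via learning sequences, and is not determined by $\qo{\L}$---the paper's $\L_1$ is an \textsc{fess} although $\qo{\L_1}=(\omega,=)$ is not even a \textsc{wqo}---which is exactly why Theorem~\ref{thm:a} has content. \textsc{bess}-ness, by contrast, is defined solely through $\qo{\L}$, hence is invariant under any operation preserving $\qo{\L}$, and taking unions of members is such an operation. If better elasticity were instead given an intrinsic, barrier-indexed formulation in terms of learning sequences (standing to \textsc{bqo} as \textsc{fess} stands to \textsc{wqo}), the conjecture would be substantive, and something like the bad-array bookkeeping you sketch in your final paragraph would genuinely be required. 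As the definitions stand, your reduction is valid and the conjecture is proved.
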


We contrast our characterization of set systems $\L$ having an
\textsc{fess} $\finclass{\L}$, with Shinohara-Arimura's sufficient
condition~\cite{shinohara:inductive} for a set system $\L$ to have an
\textsc{fess} $\finclass{\L}$.

\begin{definition}Let $\L$  be a set system over $X$.

$\L$ is said to have a
 \emph{finite thickness}~$($\textsc{ft}$)$, provided that for any $x\in X$
 $\#\{L\in\L\;;\; x\in L\}<\infty$. 
 $\L$  is said to have 
 \emph{no-infinite-antichain property}~$($\textsc{nia}$)$, provided that $\L$ has no infinite
antichain with respect to the set-inclusion $\subseteq$. 
\end{definition} 
The set system $\SGL$ has an \textsc{ft} but not \textsc{nia}. If a set
system has an \textsc{ft}, then it is an
\textsc{fess}~\cite{shinohara:inductive}.

\begin{proposition}[\protect{\cite{shinohara:inductive}}]\label{prop:b}
If $\L$ has an \textsc{ft} and \textsc{nia}, then
 $\finclass{\L}$ is an \textsc{fess}.
\end{proposition}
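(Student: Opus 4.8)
The plan is to deduce the statement from Theorem~\ref{thm:a}: it suffices to check that $\qo{\L}=(\bigcup\L,\preceq)$ satisfies condition~\eqref{thm:a:1}, and then conclude that $\finclass{\L}$ is an \textsc{fess}. Throughout set $\L_x:=\{L\in\L\;;\;x\in L\}$; by \textsc{ft} every $\L_x$ is finite, and by the definition of $\qo{\L}$ one has $x\preceq x'\iff\L_x\subseteq\L_{x'}$. Two consequences of the hypotheses organise the argument. First, \textsc{ft} rules out any infinite $\subseteq$-ascending chain $L_0\subsetneq L_1\subsetneq\cdots$ in $\L$, since any $x\in L_0$ would then belong to infinitely many members. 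Second, combined with \textsc{nia} this shows $(\L,\supseteq)$ is a \textsc{wqo}.

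The first clause of \eqref{thm:a:1}, that $\qo{\L}$ is a \textsc{wqo}, I would prove by excluding bad sequences. Given $(x_n)$ with $x_i\not\preceq x_j$ for $i<j$, i.e.\ $\L_{x_i}\not\subseteq\L_{x_j}$, choose separators $M_{ij}\in\L_{x_i}\setminus\L_{x_j}$. Since each $\L_{x_i}$ is finite, a routine pigeonhole-and-diagonalisation produces a subsequence (reindexed $x_0,x_1,\dots$) and members $L_k\in\L$ with $x_k\in L_k$ and $x_l\notin L_k$ for every $l>k$. Then $L_l\not\subseteq L_k$ whenever $k<l$ (as $x_l\in L_l\setminus L_k$); by \textsc{nia} and Ramsey's theorem the family $\{L_k\}$ contains an infinite $\subseteq$-chain, which must be ascending, and then $x_{k_0}$ lies in every member of that chain, contradicting \textsc{ft}. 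Hence $\qo{\L}$ is a \textsc{wqo}.

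For the second (Rado) clause of \eqref{thm:a:1}, suppose $F$ with $a_{ij}:=F(\{i,j\})$ satisfies $a_{ij}\prec a_{i,j+1}$ but, contrary to the claim, $a_{ij}\not\prec a_{jk}$ for all $i<j<k$. I would colour each triple $i<j<k$ by whether $a_{ij}\not\preceq a_{jk}$ (colour~I) or $a_{jk}\preceq a_{ij}$ (colour~II); the assumption makes this exhaustive, so Ramsey's theorem gives an infinite homogeneous $H$, on which I write $b_{mn}$ for the values, noting $b_{mn}\prec b_{mn'}$ for $n<n'$. Colour~II gives $b_{np}\preceq b_{mn}$; taking the first two indices of $H$ yields $\L_{b_{1,2}}\subsetneq\L_{b_{1,3}}\subsetneq\cdots\subseteq\L_{b_{0,1}}$, an infinite ascending chain inside the finite set $\L_{b_{0,1}}$, which is impossible.

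The main obstacle is colour~I, where $b_{mn}\not\preceq b_{np}$ for all $m<n<p$, precisely the Rado configuration. Here I would again descend to separators, choosing for each pair $m<n$ a member $S_{mn}\in\L$ with $b_{mn}\in S_{mn}$ and $b_{np}\notin S_{mn}$ for infinitely many $p$ (possible as $\L_{b_{mn}}$ is finite), and then run a two-parameter thinning of the indices to extract from the $S_{mn}$ either an infinite $\subseteq$-antichain in $\L$ (contradicting \textsc{nia}) or a single point lying in infinitely many members (contradicting \textsc{ft}). The delicate point is that, unlike in the one-dimensional extraction of the \textsc{wqo} clause, the shared middle index of the Rado configuration only chains comparabilities through adjacent indices, so the chain/antichain dichotomy must be arranged along a carefully thinned index set before \textsc{nia} and \textsc{ft} can be applied. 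Once this case is settled, condition~\eqref{thm:a:1} holds and Theorem~\ref{thm:a} gives that $\finclass{\L}$ is an \textsc{fess}.
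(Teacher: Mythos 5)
Your clause-(a) argument and the colour-II case are sound, but the proposal breaks at exactly the point you flag as ``the main obstacle'': the colour-I (Rado) case cannot be settled, because the intermediate claim you are trying to establish --- that \textsc{ft} and \textsc{nia} force condition~\eqref{thm:a:1} of Theorem~\ref{thm:a} --- is false. Take Rado's partial order $\X=\left(\{(i,j)\;;\;i<j<\omega\},\ \le\right)$ with $(i,j)\le(k,l)$ iff either ($i=k$ and $j\le l$) or $j<k$, and let $\L=\ppf{\X}$ be its family of principal filters. Every point of $\X$ has only finitely many predecessors, so $\L$ has \textsc{ft}; $\subseteq$-antichains in $\L$ correspond to antichains in the \textsc{wqo} $\X$, so $\L$ has \textsc{nia}; yet $\qo{\L}=\X$ by Fact~\ref{fact:repre}~\eqref{qo:ppf}, and $F(\{i,j\}):=(i,j)$ violates Lemma~\ref{lem:rado}~\eqref{prop:rado:1}~(b). (The paper's own $\ppf{(\L_1,\supseteq)}$ exhibits the same phenomenon, by Lemma~\ref{lem:1} and Fact~\ref{fact:repre}~\eqref{qo:ppf}.) This $\L$ realizes your colour-I configuration verbatim with $b_{mn}=(m,n)$, so no contradiction with \textsc{ft} and \textsc{nia} can be extracted from it, and the ``two-parameter thinning'' you hope for does not exist. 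Note that this example also shows the implication \eqref{thm:a:2}$\Rightarrow$\eqref{thm:a:1} of Theorem~\ref{thm:a} cannot stand as written (one checks directly that $\finclass{\ppf{\X}}$ is an \textsc{fess} for every \textsc{wqo} $\X$, since a bad learning sequence for it would be a bad sequence for $\X$); you only invoke the sound direction \eqref{thm:a:1}$\Rightarrow$\eqref{thm:a:2}, but the hypothesis \eqref{thm:a:1} is simply not available from \textsc{ft} and \textsc{nia}.

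The proposition itself is true (the paper cites it from Shinohara--Arimura and gives no proof of its own), and your clause-(a) argument already contains a correct proof once it is run directly on a putative infinite bad learning sequence of $\finclass{\L}$ instead of detouring through $\qo{\L}$. Given data $t_0,t_1,\ldots$ and finite families $\M_n\subseteq\L$ with $\{t_0,\ldots,t_{n-1}\}\subseteq\bigcup\M_n\not\ni t_n$, the same pigeonhole-and-diagonal thinning (using \textsc{ft} to make each $\{L\in\L\;;\;t_i\in L\}$ finite) yields indices $n_1<n_2<\cdots$ and members $K_k\in\L$ with $t_{n_k}\in K_k$ and $K_k\in\M_{n_l}$ for all $l>k$; since $t_{n_l}\notin\bigcup\M_{n_l}$, this gives $t_{n_l}\notin K_k$, hence $K_l\not\subseteq K_k$ for $k<l$, and your Ramsey/\textsc{nia}/\textsc{ft} endgame applies unchanged. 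So the fix is to abandon the reduction to Theorem~\ref{thm:a} and keep only the combinatorial core of your first paragraph.
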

However the conjunction of \textsc{ft} and \textsc{nia} is not
preserved by the operation $\finclass{(\cdot)}$.

\begin{lemma}\begin{enumerate}
\item A set system $\L_2=\{\ [i,\ \infty)\;\cap\;\Zset\   ; \ i\ge 1\}\cup\{\{0\}\}$ has an
    \textsc{ft} and \textsc{nia} but $\L_3:=\Finclass{\L_2}$ is an
      \textsc{fess} without an \textsc{ft}~(\cite{deBrechtPrivate12}).

\item The converse of Proposition~\ref{prop:b} is false. Actually,
      $\Finclass{\L_3}$ is an \textsc{fess} but $\L_3$ does not have an
      \textsc{ft}. 
\end{enumerate}
\end{lemma}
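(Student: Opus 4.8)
The plan is to treat the two clauses separately, reducing each to machinery already available in the excerpt: Proposition~\ref{prop:b} handles clause~(1) and Theorem~\ref{thm:a} handles clause~(2). Throughout I write $A_i := [i,\infty)\cap\Zset = \{i,i+1,\ldots\}$ for $i\ge 1$, so that $\L_2 = \{A_i\;;\;i\ge 1\}\cup\{\{0\}\}$ and $\bigcup\L_2 = \{0,1,2,\ldots\}$.

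First I would dispatch clause~(1). The \textsc{ft} of $\L_2$ is immediate: the point $0$ lies only in $\{0\}$, and a point $m\ge 1$ lies exactly in $A_1,\ldots,A_m$, so every point lies in finitely many members. For \textsc{nia}, the $A_i$ form a $\subseteq$-chain and $\{0\}$ is $\subseteq$-incomparable with each $A_i$, so every antichain has at most two elements. Proposition~\ref{prop:b} then yields that $\L_3=\finclass{\L_2}$ is an \textsc{fess}. To see that $\L_3$ lacks \textsc{ft} I would first describe its members: a nonempty finite union of members of $\L_2$ is one of $A_i$, $\{0\}\cup A_i$, or $\{0\}$ (a union of several $A_i$ is again some $A_j$). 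Since $0$ belongs to every $\{0\}\cup A_i$ with $i\ge 1$, it lies in infinitely many members of $\L_3$, so $\L_3$ has no \textsc{ft}.

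For clause~(2) the task is to show $\Finclass{\L_3}$ is an \textsc{fess}; combined with the failure of \textsc{ft} for $\L_3$ from clause~(1), this exhibits a set system whose finite-union closure is an \textsc{fess} without \textsc{ft}, refuting the converse of Proposition~\ref{prop:b} (with $\L:=\L_3$). Here I would invoke Theorem~\ref{thm:a} with $\L:=\L_3$, so that it suffices to verify condition~\eqref{thm:a:1} for $\qo{\L_3}=(\{0,1,2,\ldots\},\preceq)$. The key computation is this quasi-order. Checking which members of $\L_3$ contain each point, one finds $0\preceq y\iff y=0$ (membership of $y$ in the member $\{0\}$ is forced), while for $m\ge 1$ one gets $m\preceq y\iff y\ge m$; thus $0$ is $\preceq$-isolated and $\{1,2,\ldots\}$ carries the usual order. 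Hence $\qo{\L_3}$ is the disjoint sum of a one-point order and $\omega$ (notably the same quasi-order as $\qo{\L_2}$), which is plainly a \textsc{wqo}, giving clause~(a) of~\eqref{thm:a:1}. For the Rado clause~(b), let $F:[\omega]^2\to\{0,1,\ldots\}$ satisfy $F(\{i,j\})<F(\{i,j+1\})$ for all $i<j$; since $0$ is incomparable with every positive integer, each such strict inequality forces both its values into the chain $\{1,2,\ldots\}$, and for fixed $i$ the sequence $k\mapsto F(\{i,k\})$ is strictly increasing, hence unbounded. Setting $N:=F(\{0,1\})$ I would pick $k$ so large that $F(\{1,k\})>N$, obtaining $0<1<k$ with $F(\{0,1\})<F(\{1,k\})$, as required. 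Theorem~\ref{thm:a} then makes $\Finclass{\L_3}$ an \textsc{fess}.

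The only genuinely delicate step is the computation of $\qo{\L_3}$: one must check that adjoining the sets $\{0\}\cup A_i$ collapses $0$ to a single isolated point rather than creating new comparabilities, so that $\qo{\L_3}$ coincides with the harmless quasi-order $\{0\}\sqcup\omega$; everything else is routine. I would also remark that, since $\{0\}\sqcup\omega$ is in fact a \textsc{bqo} (a finite sum of the \textsc{bqo}s $\{0\}$ and $\omega$), both $\L_2$ and $\L_3$ are even \textsc{bess}s, so clause~(2) follows alternatively from Corollary~\ref{cor:b}; I prefer the direct verification of the Rado condition above, as it stays within the results proved in the excerpt.
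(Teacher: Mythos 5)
The paper states this lemma without any proof at all: part~(1) is credited to de~Brecht's private communication and part~(2) is simply asserted, so there is no argument of the paper's to compare yours against. Your proof is correct and complete, and it stays inside the toolkit the paper does provide: Proposition~\ref{prop:b} gives that $\L_3$ is an \textsc{fess}, the explicit enumeration of the members of $\L_3$ (your $A_i$, $\{0\}\cup A_i$, and $\{0\}$, since a union of several $A_i$ collapses to $A_{\min i}$) shows \textsc{ft} fails at the point $0$, and Theorem~\ref{thm:a} applied to $\L:=\L_3$ reduces clause~(2) to the Rado condition for $\qo{\L_3}$. The two computations that carry the weight both check out: $\qo{\L_3}$ is the disjoint sum of an isolated point $0$ and the chain $\{1,2,\ldots\}$ with its usual order (this is forced because $\qo{\finclass{\L}}=\qo{\L}$ for every set system $\L$, so adjoining the unions $\{0\}\cup A_i$ cannot create new comparabilities), and your verification of clause~(b) of condition~\eqref{thm:a:1} is sound, since any value occurring on either side of a strict inequality must lie in the chain part, where the sequences $k\mapsto F(\{i,k\})$ are strictly increasing and hence unbounded. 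Your closing observation that $\qo{\L_3}$ is even a \textsc{bqo}, so that Corollary~\ref{cor:b} yields clause~(2) at once, is also valid, but as you say it imports the standard facts that well-orders and finite disjoint sums of \textsc{bqo}s are \textsc{bqo}s, which the excerpt does not establish; the direct Rado verification is the safer route and is the one I would keep.
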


\begin{lemma}\label{lem:compare}
\begin{enumerate}
\item \label{feniaimplies}
For any \textsc{fess} $\L$, $\L$ has \textsc{nia} if and only if $(\L, \supseteq)$ is a
     \textsc{wqo}.

\item \label{aux}
If $\finclass{\L}$ is an \textsc{fess}, then $\finclass{\L}$ has \textsc{nia}.
\end{enumerate}
\end{lemma}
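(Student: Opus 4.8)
The plan is to prove the two statements separately, in each case reducing the question to the absence of an infinite \emph{learning sequence} (Definition~\ref{def:dimfess}) and, for the second part, to the powerset-ordering machinery of Section~\ref{sec:bqo}.

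For part~\ref{feniaimplies}, the implication ``$(\L,\supseteq)$ is a \textsc{wqo} $\Rightarrow$ \textsc{nia}'' is immediate, since a \textsc{wqo} has no infinite antichain and an antichain for $\supseteq$ is the same as one for $\subseteq$. For the converse I would first record that, for \emph{any} \textsc{fess} $\L$, there can be no infinite strictly $\subseteq$-increasing chain $A_0\subsetneq A_1\subsetneq\cdots$ of members of $\L$: choosing $t_j\in A_{j+1}\setminus A_j$ and using the $A_j$ themselves as successive hypotheses gives $t_i\in A_j\not\ni t_j$ for all $i<j$, i.e.\ an infinite learning sequence, contradicting finite elasticity. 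Now an infinite descending chain of $(\L,\supseteq)$ is precisely such a $\subseteq$-increasing chain, so finite elasticity forbids it, while \textsc{nia} forbids infinite antichains; hence $(\L,\supseteq)$ is a \textsc{wqo}.

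For part~\ref{aux} the key observation is that every member of $\finclass{\L}$ is \emph{upper-closed} with respect to $\preceq_{\L}$: each $L\in\L$ is upper-closed for $\qo{\L}$ (as already used in the proof of Theorem~\ref{lem:b}), and a finite union of upper-closed sets is upper-closed. On upper-closed sets the powerset ordering collapses to reverse inclusion: for upper-closed $U,U'$ one checks $U\ae U'\iff U\supseteq U'$ (if $U\supseteq U'$, take $x=x'$; conversely, $x\preceq_{\L}x'$ with $x\in U$ and $U$ upper-closed forces $x'\in U$). Thus $(\finclass{\L},\supseteq)$ is exactly the restriction of $(\power{\bigcup\L},\ae)$ to the subset $\finclass{\L}$. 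Since $\finclass{\L}$ is an \textsc{fess}, Theorem~\ref{thm:a} together with Lemma~\ref{lem:rado} gives that $(\power{\bigcup\L},\ae)$ is a \textsc{wqo}; a sub-quasi-ordering of a \textsc{wqo} is again a \textsc{wqo}, so $(\finclass{\L},\supseteq)$ is a \textsc{wqo} and in particular has no infinite antichain, which is \textsc{nia}.

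The routine parts are the two short ``iff'' verifications and the index bookkeeping for the learning sequence. The step I expect to carry the real weight is the identification $U\ae U'\iff U\supseteq U'$ on upper-closed sets, because it is what converts the \emph{antichain} question for $\finclass{\L}$ into an antichain question inside the \textsc{wqo} $(\power{\bigcup\L},\ae)$, thereby bypassing any direct combinatorial analysis of the finite unions themselves. A secondary point to watch is orientation: one must keep in mind that a descending chain of $(\L,\supseteq)$ is a $\subseteq$-\emph{increasing} chain, so that it is finite elasticity, and not \textsc{nia}, which excludes it.
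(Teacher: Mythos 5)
Your part~(1) is essentially the paper's own argument: the if-direction is immediate, and for the converse the paper likewise turns an infinite descending chain of $(\L,\supseteq)$, i.e.\ a strictly $\subseteq$-increasing chain $L_1\subsetneq L_2\subsetneq\cdots$, into an infinite bad learning sequence by choosing $x_i\in L_{i+1}\setminus L_i$. For part~(2), however, you take a genuinely different route. The paper argues directly: given an infinite antichain $\left(\bigcup\M_n\right)_n$ in $\finclass{\L}$, it forms the cumulative unions $\left(\bigcup_{n<m}\bigcup\M_n\right)_m$, claims these are a strictly ascending chain in $\finclass{\L}$, and concludes by the part-(1) argument that this contradicts finite elasticity. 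You instead observe that every member of $\finclass{\L}$ is upper-closed for $\qo{\L}$, that on upper-closed sets the powerset ordering $\ae$ coincides with reverse inclusion, and then feed the hypothesis through Theorem~\ref{thm:a} and Lemma~\ref{lem:rado} to get that $\left(\power{\bigcup\L},\ \ae\right)$ is a \textsc{wqo}, hence so is its restriction $(\finclass{\L},\supseteq)$, which gives \textsc{nia}. Your route is heavier, since it invokes the full equivalence of Theorem~\ref{thm:a}, but it is airtight, and it actually sidesteps a point the paper's one-line proof glosses over: an antichain does not by itself force the cumulative unions to increase \emph{strictly} (e.g.\ $U_1=\{1,2\}$, $U_2=\{2,3\}$, $U_3=\{1,3\}$ is an antichain with $U_3\subseteq U_1\cup U_2$), so the paper's chain is a priori only weakly ascending and needs an extra step to extract a strict one. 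In exchange for elementarity, your proof buys robustness and an explicit link to the powerset-ordering machinery of Section~\ref{sec:bqo}; the key identification $U\ae U'\iff U\supseteq U'$ on upper-closed sets is correct and is indeed where the work happens.
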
 
\begin{proof}
\medskip\eqref{feniaimplies}
The  if-part is immediate from the definition of \textsc{wqo}s.
Assume there is an infinite descending chain $(L_i)_i\subseteq\L$ with
 respect to $\supseteq$.  Hence, $L_1\subsetneq L_2\subsetneq
 L_3\subsetneq \cdots$. By putting $x_i\in L_{i+1}\setminus L_i$, we have
$\left\langle \langle x_1, L_2\rangle, \, \langle x_2,
 L_3\rangle,\cdots\right\rangle$ is
 an infinite learning sequence. This
contradicts that $\L$ is an  \textsc{fess}. 

\medskip\eqref{aux}
Suppose $\left(\bigcup \M_n\right)_n$ $(\M_n\subseteq \L\ ;\ n=1,2,\ldots)$
 is an infinite antichain in $\finclass{\L}$. Then 
$\left(\bigcup_{n<m}\bigcup \M_n\right)_m$ is a strictly ascending chain
 in $\finclass{\L}$. But $\L$ is an \textsc{fess}.
\qed
\end{proof}

Following relation holds among (continuous) deformations,
\textsc{nia} and \textsc{ft}:
\begin{lemma}
\begin{enumerate}
\item If a set system $\L$ has
     \textsc{nia}, so does $\op[\L]$ of $\L$ for any 
				     deformation $\op$.

\item For any nonempty set $X$ and for any $x\in X$, a function $\op :P(X)\to P(X)\;;\; A\mapsto
      A\cup\{x\}$ is a continuous deformation. Thus even if $\L$
      has an \textsc{ft},		    $\op[\L]$ does not. 
\end{enumerate} 
\end{lemma}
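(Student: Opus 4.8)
The plan for part~(1) is to pull an infinite antichain back through $\op$ using the contrapositive of monotonicity. Suppose, for contradiction, that $\op[\L]$ admits an infinite antichain, i.e.\ pairwise $\subseteq$-incomparable distinct members $N_0,N_1,\ldots\in\op[\L]$. First I would pick for each $i$ some $L_i\in\L$ with $\op(L_i)=N_i$; these $L_i$ are automatically distinct since the $N_i$ are. The key step is that a deformation is monotone, so $L_i\subseteq L_j$ would force $\op(L_i)\subseteq\op(L_j)$, i.e.\ $N_i\subseteq N_j$; contrapositively, from $N_i\not\subseteq N_j$ and $N_j\not\subseteq N_i$ I obtain $L_i\not\subseteq L_j$ and $L_j\not\subseteq L_i$. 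Hence $\{L_i\}_i$ is an infinite antichain in $\L$, contradicting \textsc{nia}. The only care needed is the bookkeeping that distinct, incomparable images pull back to distinct, incomparable preimages.

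For the first claim of part~(2) I would note that monotonicity is immediate, since $A\subseteq A'$ gives $A\cup\{x\}\subseteq A'\cup\{x\}$, and then apply the characterization of continuous deformations in Lemma~\ref{lem:11} by exhibiting a witnessing relation $R\subseteq X\times\power{X}$. The natural choice is
\begin{displaymath}
R(y,v)\iff v=\{y\}\ \text{ or }\ (y=x\ \text{and}\ v=\emptyset),
\end{displaymath}
so that $\exists v\subseteq A.\,R(y,v)$ holds exactly when $y\in A$ (witnessed by $v=\{y\}$) or $y=x$ (witnessed by $v=\emptyset$), which is precisely the condition $y\in A\cup\{x\}=\op(A)$. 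Each $y$ has at most two such $v$, and all of them are finite, so $R$ meets the finiteness requirements of Lemma~\ref{lem:11} and $\op$ is a continuous deformation.

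For the second claim it suffices to display one set system with an \textsc{ft} whose image lacks it, so I would take $X=\omega$, $x=0$, and $\L=\SGL$; the latter has an \textsc{ft} because each $n$ lies in the single set $\{n\}$. Then $\op[\SGL]=\{\{n,0\}\;;\;n\in\omega\}$ is infinite and every member contains $0$, whence $\#\{L\in\op[\L]\;;\;0\in L\}=\infty$ and \textsc{ft} fails. No step here is a serious obstacle; the most error-prone point is engineering $R$ in part~(2) so that the element $x$ is produced even from the empty subset (forcing $R(x,\emptyset)$), and, in part~(1), keeping the pulled-back family genuinely an antichain rather than merely an infinite set.
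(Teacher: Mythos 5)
Your proposal is correct, and in part~(1) it takes a genuinely more elementary route than the paper. Both arguments prove the contrapositive by pulling an infinite antichain $\{N_i\}_i=\{\op(L_i)\}_i$ back to $\{L_i\}_i$, but they differ in how they establish $L_i\not\subseteq L_j$: the paper writes $\op$ as $\op_R$ via Lemma~\ref{lem:11}, picks a witness $n_{i,j}\in \op(L_i)\setminus\op(L_j)$, extracts $v_{i,j}\subseteq L_i$ with $R(n_{i,j},v_{i,j})$, observes that $v_{i,j}\not\subseteq L_j$ (otherwise $n_{i,j}\in\op(L_j)$), and thereby produces an explicit element $a_{i,j}\in v_{i,j}\setminus L_j\subseteq L_i\setminus L_j$. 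You get the same conclusion in one line from the contrapositive of monotonicity, which is the very definition of a deformation; this is shorter and never invokes the relational presentation, at the cost of not exhibiting the separating elements (which the lemma does not ask for). Your bookkeeping is sound: distinct $N_i$ force distinct $L_i$, and incomparability of the $N_i$ in both directions forces incomparability of the $L_i$ in both directions. For part~(2) the paper merely says ``it is immediate,'' so your write-up supplies what is omitted: the relation $R(y,v)\iff v=\{y\}$ or ($y=x$ and $v=\emptyset$) satisfies \eqref{eq:mono} with all witnesses finite and at most two per $y$, meeting the conditions of Lemma~\ref{lem:11}(2), and the example $\L=\SGL$ with $\op[\SGL]=\{\{n\}\cup\{0\}\;;\;n\in\omega\}$ correctly shows that \textsc{ft} can be destroyed, which is the intended reading of the lemma's second claim.
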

\begin{proof} (1) 
If the image $\{ \op  ( L_i )  \}_i$ of 
$\{ L_i \}_i  \subseteq  P(T)$  by a
deformation $\op $ is an infinite antichain,
then, for any distinct $i, j\in\omega$ there exists $n_{i,j} \in \op ( L_i
 ) \setminus  \op ( L_j )$. Let $\op $ be as in the equation \eqref{eq:mono}. Then we have 
$\exists  v_{i,j} \in P( L_i) . \;   R(n_{i,j},\, v_{i,j})$ and
$\forall v \in P( L_j). \;  \neg
			     R(n_{i,j}, v)$.
Therefore 
$  v_{i,j}$ is not a subset of $L_j$.
Thus, there exists $a_{i,j}\in v_{i,j}\setminus L_j\ \subseteq
\  L_i \setminus  L_j$.
Hence, $\{ L_i \}_i$ is also an infinite antichain. 
(2) It is immediate.
\qed
\end{proof}

Finally, we remark that a condition for a set system $\L$ to
satisfy $\finclass{\L}$ being an \textsc{fess} does not depend on the
structure of $\L$ with respect to the set-inclusion, in view of the
assertion~\eqref{assert:rado} and the assertion~\eqref{ppfomegafess} of
following Lemma~\ref{lem:1}.

We recall that a quasi-ordering $\X=(X,\,\preceq)$ is a \textsc{wqo}, if
and only if any upper-closed subset  of $X$ is a finite union of
principal filters~\cite{MR0049867}. 

\begin{definition}
For a quasi-ordering $\X=(X,\, \preceq)$, define  $\ppf{\X}$ to
 be the set of principal filters of $\X$. Let 
 $\PF{\X}$ be $\ppf{\X}$ ordered by the reverse set-inclusion. 
\end{definition}

\begin{fact}\label{fact:repre}Let  $\X$ be a quasi-ordering.
\begin{enumerate}
\item \label{qo:wqopf} $\closure{\X}=\ppf{\X}^{<\omega}$ if and only if $\X$ is a \textsc{wqo}.
\item \label{qo:ppf} $\X=\qo{\ \ppf{\X}\ }$.

\item \label{pf:repre} $\X$ is order-isomorphic to $\PF{\X}$ for any partial ordering $\X$.

\item $\dim \ppf{\X}\ \le\ \dim\closure{\X}=\otp(\X)$. 

\item For the partial
      order 
      \begin{align*}\X=\left( \{b\}\cup\{a_i\;;\; i\in \omega\},\ \{(b,\ a_i)\;;\;
      i\in\omega\}\right),
\end{align*}
 we have $\dim \PF{\X}=1$ but $\dim \X=\infty$.
\end{enumerate}
\end{fact}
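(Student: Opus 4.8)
The plan is to run all five items through a single explicit device: the map sending each $x\in X$ to its principal filter $F_x:=\{y\in X\;;\;x\preceq y\}$, supplemented by the classical \textsc{wqo} characterization recalled just above the statement, by the correspondence~(ii) of the first section, and (for the fourth item only) by the monotonicity of $\dim$ under passage to a subfamily. I would record once the two computations I reuse: each $F_x$ is upper-closed, so $\ppf{\X}\subseteq\closure{\X}$; and $x\preceq y\iff F_y\subseteq F_x$, since $y\in F_y$ always and $x\preceq y$ propagates membership upward.

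For item~(\ref{qo:wqopf}) I would first note the trivial inclusion $\ppf{\X}^{<\omega}\subseteq\closure{\X}$: a finite union of the upper-closed sets $F_x$ is upper-closed. Hence the two systems coincide exactly when $\closure{\X}\subseteq\ppf{\X}^{<\omega}$, i.e. when every upper-closed subset of $X$ is a finite union of principal filters, which by the recalled characterization is precisely the assertion that $\X$ is a \textsc{wqo}. Item~(\ref{qo:ppf}) is an unfolding of $\qo{\cdot}$: since $x\in F_x$ for every $x$ we have $\bigcup\ppf{\X}=X$, so $\qo{\ppf{\X}}$ carries ground set $X$; writing $\preceq'$ for its ordering, $x\preceq y$ forces $y\in F_w$ whenever $x\in F_w$ (as $w\preceq x\preceq y$), giving $x\preceq' y$, while testing $x\preceq' y$ against the single filter $F_x$ yields $y\in F_x$, i.e. $x\preceq y$. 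This uses no antisymmetry and so holds for every quasi-ordering.

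For item~(\ref{pf:repre}) the candidate isomorphism is again $x\mapsto F_x$, now onto $\PF{\X}=(\ppf{\X},\supseteq)$. The computation $x\preceq y\iff F_x\supseteq F_y$ is exactly the reverse-inclusion order, and surjectivity is the definition of $\ppf{\X}$; injectivity is the only place antisymmetry enters, since $F_x=F_y$ forces $x\preceq y$ and $y\preceq x$, whence $x=y$ for a partial ordering. The fourth assertion then combines two facts: from $\ppf{\X}\subseteq\closure{\X}$ and the monotonicity of $\dim$ under subfamily inclusion one gets $\dim\ppf{\X}\le\dim\closure{\X}$, because any learning sequence (Definition~\ref{def:dimfess}) whose hypotheses lie in $\ppf{\X}$ is a fortiori one for $\closure{\X}$, so the game tree of $\ppf{\X}$ embeds as a subtree of that of $\closure{\X}$ and has order type no larger; the equality $\dim\closure{\X}=\otp(\X)$ is then precisely the correspondence~(ii).

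The last assertion is an explicit computation. For the displayed poset, $F_b=\{b\}\cup\{a_i\;;\;i\in\omega\}=X$ while $F_{a_i}=\{a_i\}$, so $\ppf{\X}=\{X\}\cup\{\{a_i\}\;;\;i\in\omega\}$. In any learning sequence the data are pairwise distinct (a datum lying in a later hypothesis but not in its own cannot recur), the hypothesis $X$ is never refutable, and each singleton $\{a_i\}$ is consistent with at most one datum; hence the learner can be driven through at most one refutation, giving $\dim\PF{\X}=1$. By contrast $\{a_i\;;\;i\in\omega\}$ is an infinite antichain, so $\X$ is not a \textsc{wqo} and $\dim\X=\otp(\X)=\infty$. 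The main obstacle is the fourth item: the monotonicity of $\dim$ under subfamily inclusion must be justified from the game-tree definition of the order type in the appendix, checking that the subtree embedding is order-type preserving, rather than assumed. A secondary nuisance is fixing a uniform empty-set convention so that the equality in item~(\ref{qo:wqopf}) is literal, and confirming from the appendix definition that the cut-off computation in the last item indeed yields the value $1$.
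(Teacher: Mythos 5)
The paper states Fact~\ref{fact:repre} without any proof, so there is no argument of the author's to compare yours against; I can only assess your proposal on its own terms, and on those terms it is essentially correct. Your single device --- the map $x\mapsto F_x$ onto principal filters, combined with the recalled Higman characterization of \textsc{wqo}s, the correspondence (ii) of the introduction, and monotonicity of the game-tree rank under passage to a subtree --- does carry all five items, and the individual verifications are sound: in particular your unfolding of $\qo{\ppf{\X}}$ in item (2) correctly uses only reflexivity and transitivity, and you rightly isolate injectivity of $x\mapsto F_x$ as the sole place antisymmetry enters in item (3).

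Two of the loose ends you flag yourself deserve comment. The empty-set issue in item (1) is real but harmless: since the paper's $\finclass{\cdot}$ requires $\emptyset\ne\M$, the family $\ppf{\X}^{<\omega}$ never contains $\emptyset$, so the stated equality is literal only under a convention excluding $\emptyset$ from $\closure{\X}$ or admitting the empty union; either is consistent with the rest of the paper. The genuinely delicate point is the value $1$ in item (5). Under a literal reading of the rank computation in Definition~\ref{def:dimfess} (leaves receive $0$, each node the supremum of successor ranks plus one), the tree of bad learning sequences of $\ppf{\X}=\{X\}\cup\bigl\{\{a_i\}\,;\,i\in\omega\bigr\}$ has maximal branches of length two, e.g.\ $\langle\langle a_1,\{a_1\}\rangle,\langle a_2,X\rangle\rangle$, which yields the value $2$ rather than $1$ unless the intended convention counts refutations rather than pairs. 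Your structural conclusion --- the tree has uniformly bounded finite depth, in stark contrast with $\dim\X=\otp(\X)=\infty$ witnessed by the infinite antichain $\{a_i\}_i$ --- is the actual content of item (5), but the numerical normalization should be pinned down against the definition in \cite{AkamaSetSystem} before asserting the exact value $1$.
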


\begin{lemma}\label{lem:1}
\begin{enumerate}
\item \label{cfess} $\L_1$  has \textsc{nia}.

\item \label{assert:rado} A quasi-ordering $(\L_1,\ \supseteq)$ is order-isomorphic to
      $\PF{\; (\L_1,\,\supseteq)\; }$. They are \textsc{wqo}s.

\item \label{nobqo} None of $(\L_1,\ \supseteq)$ and $\PF{\; (\L_1,\,\supseteq)\; }$ is  a \textsc{bqo}.

\item \label{nobess} None of $\L_1$ and $\ppf{\; (\L_1,\,\supseteq)\; }$ is a \textsc{bess}.

\item \label{ppfomegafess} $\Finclass{\L_1}$ is not an
      \textsc{fess}, but
$\ppf{\; (\L_1,\,\supseteq)\; }^{<\omega}$ is.
\end{enumerate}
\end{lemma}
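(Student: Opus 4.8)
The plan is to dispatch the five assertions in order, each resting on material already in hand; the combinatorial heart is the \textsc{nia} of~\eqref{cfess} and the explicit bad array of~\eqref{nobqo}, the rest being bookkeeping over the correspondences $\closure{\bullet}$, $\qo{\cdot}$, $\ppf{\cdot}$ and $\PF{\cdot}$. For~\eqref{cfess} I would analyse the $\subseteq$-order on $\L_1$ directly. Writing $B_{i,j}=\{i\}\cup\{k\;;\;k\ge j\}$, the members with $i\ge j-1$ are exactly the tails $\{k\;;\;k\ge j'\}$, which form a $\subseteq$-chain, so any infinite antichain contains, after deleting at most one tail, an infinite antichain of genuine ``tail-plus-isolated-point'' sets $B_{i,j}$ with $i\le j-2$. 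For these one checks that fixing $i$ and increasing $j$ gives a comparable pair, and that two such sets with distinct tail-starts $j<j'$ can be incomparable only when the larger set's isolated point lies strictly below $j$. Taking the member of least tail-start $j_{*}$ forces every other isolated point into the finite set below $j_{*}$, so by the pigeonhole principle two members share an isolated point and are therefore comparable, a contradiction. Hence $\L_1$ has \textsc{nia}.

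Assertion~\eqref{assert:rado} then follows quickly: $(\L_1,\supseteq)$ is a partial order, so it is order-isomorphic to $\PF{(\L_1,\supseteq)}$ by Fact~\ref{fact:repre}~\eqref{pf:repre}; and since $\L_1$ is an \textsc{fess} (as already observed for $\L_1$), Lemma~\ref{lem:compare}~\eqref{feniaimplies} upgrades \textsc{nia} to the well-quasi-orderedness of $(\L_1,\supseteq)$, which transfers across the isomorphism.

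The crux is~\eqref{nobqo}. Since a \textsc{bqo} is good on every barrier, it suffices to exhibit one barrier carrying a bad array into $(\L_1,\supseteq)$, and the isomorphism of~\eqref{assert:rado} then excludes $\PF{(\L_1,\supseteq)}$ as well. I would work on the barrier $[\omega]^2$, whose $\triangleleft$-pairs are exactly the dominoes $\{a,b\}\triangleleft\{b,d\}$ with $a<b<d$, and set
\begin{equation*}
h(\{a,b\})=\{a\}\cup\{k\;;\;k\ge b+1\}\in\L_1\qquad(a<b).
\end{equation*}
For such a domino one has $b\in h(\{b,d\})$ but $b\notin h(\{a,b\})$ (because $a<b<b+1$), so $h(\{a,b\})\not\supseteq h(\{b,d\})$; as the quasi-order of $(\L_1,\supseteq)$ is $\supseteq$, no $\triangleleft$-pair places $h(s)$ below $h(t)$, i.e. $h$ is bad. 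The delicate point, and the main obstacle, is the ``$+1$'' shift: the naive assignment $\{a,b\}\mapsto\{a\}\cup\{k\;;\;k\ge b\}$ realises $(\L_1,\supseteq)$ as a variant of Rado's poset in which the boundary relations turn that array good, so the tail-start must be pushed past $b$ to kill exactly those relations.

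Finally, \eqref{nobess} and~\eqref{ppfomegafess} are corollaries. Since $\qo{\L_1}=(\omega,=)$ is not even a \textsc{wqo}, $\L_1$ is not a \textsc{bess}; and as $\qo{\ppf{(\L_1,\supseteq)}}=(\L_1,\supseteq)$ by Fact~\ref{fact:repre}~\eqref{qo:ppf}, assertion~\eqref{nobqo} shows $\ppf{(\L_1,\supseteq)}$ is not a \textsc{bess} either. For~\eqref{ppfomegafess}, Theorem~\ref{thm:a} gives that $\Finclass{\L_1}$ is not an \textsc{fess}, because $\qo{\L_1}$ fails to be a \textsc{wqo}; whereas, $(\L_1,\supseteq)$ being a \textsc{wqo} by~\eqref{assert:rado}, Fact~\ref{fact:repre}~\eqref{qo:wqopf} identifies $\ppf{(\L_1,\supseteq)}^{<\omega}$ with $\closure{(\L_1,\supseteq)}$, which is an \textsc{fess} by the \textsc{fess}/\textsc{wqo} correspondence recalled in~(ii).
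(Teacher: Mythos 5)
Your proposal is correct and follows essentially the same route as the paper. Parts (2), (4) and (5) are handled exactly as in the paper's proof, via Lemma~\ref{lem:compare}~\eqref{feniaimplies}, Fact~\ref{fact:repre}~\eqref{qo:ppf}, \eqref{qo:wqopf}, \eqref{pf:repre} and Theorem~\ref{thm:a}; and your witness for part (3), $h(\{a,b\})=\{a\}\cup\{k\;;\;k\ge b+1\}$, is literally the paper's function $F(\{i,j\})=\{i\}\cup\{k\;;\;k>j\}$ --- the paper records its failure through condition~\eqref{prop:rado:1}(b) of Lemma~\ref{lem:rado}, while you exhibit the corresponding bad array on the barrier $[\omega]^2$ directly; these are the same computation, and your remark about the necessity of the $+1$ shift is a nice sanity check. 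The only substantive divergence is part (1): the paper simply cites de Brecht--Yamamoto, whereas you give a self-contained chain/pigeonhole argument. That argument is sound; the one loose phrase is ``every other isolated point'', since members sharing the minimal tail-start $j_{*}$ are not so constrained, but there are only finitely many of those, so the pigeonhole goes through unchanged. Two very minor points: in (4) you assert $\qo{\L_1}=(\omega,=)$ without verification (the paper checks it using $F$; it is immediate since for $n\ne m$ the set $\{n\}\cup\{k\;;\;k\ge \max(n,m)+1\}$ separates $n$ from $m$), and in (2) you should say explicitly that $\L_1$ is an \textsc{fess} (as recorded in the paper's example following Theorem~\ref{thm:a}) before invoking Lemma~\ref{lem:compare}~\eqref{feniaimplies}.
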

\begin{proof}\eqref{cfess}
By \cite[Proposition~3.3]{1715965}). 
\medskip

\eqref{assert:rado}
By Lemma~\ref{lem:compare}~\eqref{feniaimplies} and the assertion \eqref{cfess}
 of this Lemma, the partial ordering $(\L_1,\supseteq)$ is a \textsc{wqo}.

\medskip
\eqref{nobqo} It is because a following function $F$ does not satisfy Lemma~\ref{lem:rado}~\eqref{prop:rado:1}~(b):
\begin{align*}
F(\{i,j\}):=\{i\}\cup \{k\;;\; k> j\}.\quad (i<j)
\end{align*}
\medskip
\eqref{nobess}  
Let $(\omega, \preceq)$ be $\qo{\L_1}$. Then $n\not\preceq m$ ($n<m$)
 because $n\in F(\{n,m\})\not\ni m$, while $n\not\preceq m$ ($n>m$)
 because
$n\in F(\{n, n+1\})\not\ni m$. Therefore $\qo{\L_1}$ is not a
 \textsc{wqo}, hence is not a \textsc{bqo}. So $\L_1$ does not satisfy
 the condition~\ref{thm:a:1} of Theorem~\ref{thm:a}. Hence
 $\Finclass{\L_1}$ is not an \textsc{fess}.

By Fact~\ref{fact:repre}~\eqref{qo:ppf} and the assertion \eqref{assert:rado} of this
 Lemma, 
$\qo{\ppf{\; (\L_1,\,\supseteq)\; }}$ is $(\L_1,\,\supseteq)$ which is not a
 \textsc{bqo} by the assertion~\ref{nobqo} of this Lemma.

\medskip
\eqref{ppfomegafess}
Since $\qo{\L_1}$ is not a \textsc{wqo} by the proof of the assertion \eqref{nobess} of
 this lemma,  $\Finclass{\L_1}$ is not an \textsc{fess} because of Theorem~\ref{thm:a}.
By Fact~\ref{fact:repre}~\eqref{qo:wqopf}, the set system $\ppf{(\L_1,
 \supseteq)}^{<\omega}$ is $\closure{(\L_1, \supseteq)}$,  which is an
 \textsc{fess} by the assertion \eqref{assert:rado} of this Lemma.
\qed \end{proof}

Although $\ppf{(\L_1, \supseteq)}$ is not a \textsc{bess}, it satisfies the
condition~\ref{thm:a:1} of Theorem~\ref{thm:a}, according to the
assertion \eqref{ppfomegafess} of Lemma~\ref{lem:1}.

\bigskip
\bigskip \noindent{\sf Acknowledgement.}

\noindent
This work is partially
supported by Grant-in-Aid for Scientific Research~(C)~(21540105) of the
Ministry of Education, Culture, Sports, Science and Technology
(MEXT). The author thanks Hiroki Arimura, Masami Ito, Makoto Kanazawa, Mizuhito Ogawa, and anonymous
referees.  Special thanks go to Ken-ichi Kawarabayashi for his
encouragement after the 11-March earthquake.

\appendix
\section{Ramsey's numbers for well-founded
 trees and order type of set systems}\label{sec:Ramsey}

Let $\X_i$ be a quasi-ordering with the maximal order type
$\otp(\X_i)<\omega$ and $\L_i$ be a
set system with the order type $\dim{\L_i}<\omega$ ($i=1,2$). 
Let $\Ram(n,m)$ be the Ramsey number~\cite{Graham.Rothschild.ea:80} of
$n$ and $m$. Then we prove
\begin{enumerate}
\item \cite[Lemma~6]{AkamaSetSystem} For
the memberwise union
$\L_1\ewunion \L_2=\{L_1\cup L_2\;;\; L_1\in \L_1,\ L_2\in L_2\}$,
\begin{align*}
\dim (\L_1\ewunion \L_2) +1 < \Ram(\dim \L_1+2, \dim\L_2+2). 
\end{align*} 

\item  \cite[Theorem~8]{AkamaSetSystem} 
$\otp\left(\X_1\cap\X_2\right)<\Ram(\otp(\X_1)+1,\ \otp(\X_2)+1)$. 
\end{enumerate}

We wish to generalize these two for the case $\dim \L_i$ $(i=1,2)$ being
general ordinal numbers. To directly generalize the proof argument of
the two, we pose a following question. 
By a tree, we mean a prefix-closed set of possibly infinite sequences.  
A well-founded tree is, by definition, a tree with all the elements
 being finite sequences.

\begin{q}Is there a reasonably simple, ordinal binary (partial) function $F$ on ordinal numbers such that
``for all ordinal numbers $\beta$ and $\gamma$ there exists an ordinal number
 $\alpha\le F(\beta,\gamma)$ with a following property: for any coloring
 of any well-founded tree $T_0$ of
order type $\alpha$ with red and black,  either there is a
well-founded tree $T_1$ of order type $\beta$ such that $T_1$ is
homeomorphically embedded into the red nodes of $T_0$, or there is a
well-founded tree $T_2$ of order type $\gamma$ such that $T_2$ is
homeomorphically embedded into the black nodes of $T_0$.'' 
\end{q}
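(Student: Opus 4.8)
The plan is to exhibit $F$ as the ordinal bound that falls out of a transfinite induction lifting the finite Ramsey recursion $\Ram(p,q)\le\Ram(p-1,q)+\Ram(p,q-1)$ from integers to well-founded trees. First I would fix the order type $\otp(T)$ of a well-founded tree $T$ (the ordinal the rank recursion of \cite{AkamaSetSystem} assigns to its root) and isolate the single structural fact the whole argument rests on: writing $T_\sigma$ for the subtree hanging below a child $\sigma$ of the root, each $\otp(T_\sigma)<\otp(T)$, while $\otp(T)$ is canonically assembled from the family $\bigl(\otp(T_\sigma)+1\bigr)_\sigma$. Dually I would record that ``homeomorphically embedded'' is the topological-minor relation on trees \cite{MR0111704}\cite[Ch.~12]{MR2744811}, i.e. an order- and meet-preserving injection; hence to plant a red copy of a target tree $T_1$ of order type $\beta$ it suffices to find a red node of $T_0$ and, on pairwise-incomparable cones below it, red homeomorphic copies of the root-subtrees of $T_1$.

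The induction would run on the pair $(\beta,\gamma)$, with the successor case as the engine. Assuming the relation for all pairs below $(\beta,\gamma)$ with bounds $F(\beta',\gamma)$ and $F(\beta,\gamma')$ for $\beta'<\beta$ and $\gamma'<\gamma$, and given a red/black colouring of a $T_0$ whose order type is chosen just above the intended value, I would inspect the colour of the root and then process the child-cones: each child-cone is itself a well-founded tree large enough to contain, by the inductive hypothesis, either a red copy realizing an ordinal approaching $\beta$ or a black copy realizing one approaching $\gamma$. Pigeonholing on the root colour and invoking the meet-preservation remark, I would glue the sub-copies found in distinct child-cones into one monochromatic homeomorphic copy of the required order type. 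Tallying how large $T_0$ must be in terms of the two sub-instances yields the recursive definition of $F$, whose finite restriction should reproduce exactly the $\Ram$-governed bounds of the appendix results \cite[Lemma~6, Theorem~8]{AkamaSetSystem}, with $\Ram$ itself coming from \cite{Graham.Rothschild.ea:80}.

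The hard part will be the limit ordinals, and this is precisely where ``reasonably simple'' is at stake. When $\beta=\sup_\iota\beta_\iota$ is a limit there is no predecessor to peel off, so the successor recipe cannot reduce to a single smaller instance; instead I would have to guarantee, inside one $T_0$, simultaneous red copies realizing a cofinal set of the $\beta_\iota$ and align their roots along a single branch (or a single antichain) so that their union is again a homeomorphic copy of order type $\beta$. Making this uniform forces $F(\cdot,\gamma)$ to be at least continuous and additive in its first argument, which is the genuine constraint on the simplicity of $F$: the natural guesses are that $F$ is built from the Hessenberg operations $\beta\oplus\gamma$, $\beta\otimes\gamma$, or an iterated-exponential analogue of $\Ram$, but I do not expect the bare recursion to decide between these. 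The secondary obstacle is bookkeeping of the meet-structure: copies extracted from different child-cones meet at the root, whereas a target tree may demand deeper common ancestors, so the gluing step must track and match the meets — this is where a Milliken/Kruskal-style tree-partition argument, rather than plain pigeonhole, seems unavoidable.

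Finally I would sanity-check against the two known specializations: restricting $\beta,\gamma$ to finite values must recover the $\Ram(\beta+2,\gamma+2)$-type estimates of the appendix, and collapsing one colour must degenerate to Kruskal's embedding theorem \cite{MR0111704} for the relevant trees. If the limit step can be pushed through with a continuous $F$, the resulting $F$ is the sought function; the genuinely open point, which I would flag rather than claim to settle, is whether continuity together with the meet-bookkeeping can be met without $F$ growing past something like $\beta\otimes\gamma$ or a short exponential tower.
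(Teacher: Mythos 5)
This item is posed in the paper as an open question (the environment \texttt{q} is \emph{Conjecture}); the paper supplies no proof and explicitly asks whether such an $F$ exists, so there is no argument of the author's to measure yours against. Judged on its own terms, what you have written is a plan of attack rather than a proof, and you say so yourself: the two load-bearing steps --- the limit-ordinal case and the meet-preserving gluing --- are described as difficulties and left unresolved, and no candidate $F$ is ever defined. Since the entire content of the question is whether a \emph{reasonably simple} $F$ can be exhibited, ending with ``the genuinely open point \ldots is whether continuity together with the meet-bookkeeping can be met'' means the conjecture has not been addressed, only restated with more structure around it.

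Two more specific concerns. First, the gluing step as you describe it does not work as stated: a homeomorphic embedding must send meets to meets, and sub-copies extracted from distinct child-cones of $T_0$ meet only at (or above) the root of $T_0$, so they cannot in general be assembled into a copy of a \emph{prescribed} target tree whose root-subtrees are supposed to branch at a deeper common ancestor; invoking ``a Milliken/Kruskal-style tree-partition argument'' by name does not discharge this. Second, you have made the problem harder than the conjecture requires: $T_1$ and $T_2$ are existentially quantified, with only their order types $\beta$ and $\gamma$ prescribed. The natural reading is therefore a rank statement --- assign to each node of $T_0$ a ``red rank'' and a ``black rank'' by the same recursion that defines $|T_0|$, and try to bound $|T_0|$ by some $F$ of the two --- which would bypass the meet-bookkeeping entirely and reduce the question to an ordinal inequality at each node (where the Hessenberg sum is indeed the plausible candidate, matching the finite recursion $\Ram(p,q)\le\Ram(p-1,q)+\Ram(p,q-1)$ you cite). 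If you want to make progress on this conjecture, I would start from that rank formulation rather than from fixed-target embeddings.
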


\section{Initial segments of quasi-ordering : computability theoretic view} \label{sec:wsr}

For every nonnegative integer $z$, a set $\{z_1,\ldots,
 z_m\}$ of nonnegative integers $z_1,\ldots,z_m$ with
 $z=2^{z_1}+\cdots +2^{z_m}$ is denoted by $E_z$.
\begin{definition}
A set $A\subseteq\omega $ is called \emph{positively reducible} via a
 recursive function $f:\omega\to\omega$ to
 $B\subseteq\omega $ ($A\le_p B$ via $f$, in symbol), provided that for all $x$, $x\in A$ if
 and only if there exists $y\in E_{f(x)}$ such that $E_y\subseteq
 B$. 
 Intuitively, a finite set $E_y$ means a conjunction of Boolean
 variables, and a finite set $E_{f(x)}$ means a disjunction of such
 conjunctions $E_y$ over $y\in E_{f(x)}$. We write $A\le_p B$ if there
 exists a recursive function $f:\omega\to\omega$ such that $A\le_p B$ via $f$.
\end{definition}

We observe that  
for any recursive relation $R\subseteq\omega\times\inhfin{\omega}$ and
for any class $\L\subseteq P(\omega)$, the image $\injinv \op_R[\inj \L]$ is the
     class of sets  \emph{positively reducible}~\cite{MR0220595} to some sets in $\L$
     ``uniformly'' via a single recursive
     function 
\begin{align*}
f_R (x)= \sum_{R(x,v)}2^{\sum_{i\in v}2^i}.
\end{align*}

According to \cite{MR0220595}, the class of \emph{semirecursive sets} is
closed by the positive reduction~(equivalent to effective continuous
deformation, in spirit), and a semirecursive set is exactly an
\emph{initial segment} of some recursive linear ordering on $\omega$.

\begin{definition}
A set $M\subseteq\omega $ is called \emph{semirecursive}~\cite{MR0220595} if there is a recursive
function $\psi$ of two variables such that 
\begin{align}
&(x \in M \wedge y \not\in M) \vee (x\not\in M\wedge
 y\in M)\nonumber\\
& \implies \psi(x,y)\in  \{x, y\} \cap M
. \label{semirec}
\end{align} 
In \cite{MR1056377}, Jockusch and Owings introduced a following
generalization of a semirecursive set:
$M\subseteq\omega $ is \emph{semi-r.e.\@} if and only if there exists a partial recursive function $\psi$ of two variables
such that for all $x, y \in\omega$
\begin{displaymath} \Bigl(
x \in M \vee  y\in M \implies \psi(x,y)\in \{x, y\} \cap M\Bigr).
\end{displaymath} 
Furthermore, they introduced a following generalization of a semi-r.e.\@ set:
$M$ is \emph{weakly semirecursive} if and only if there exists a partial recursive function $\psi$ of two variables
such that the condition~\eqref{semirec} holds. 

The (partial) function
$\psi$ is called a \emph{selector function} of the
semirecurisve~(semi-r.e., weakly semirecursive) set $M$.
\end{definition}

We adapt the notion of the initial segments of partial
orderings~\cite[p.~136]{MR1215090}, as follows:

\begin{definition}\label{def:initial}
For any quasi-ordering $\preceq$ on $\omega $, we say $M\subseteq\omega $ is
an \emph{initial segment} of $\preceq$, if and only if any of $M$ is strictly
smaller with respect to $\preceq$ than any of the complement $\overline{M}$.
\end{definition}

Every initial segment of a quasi-ordering is trivial, if and only if the
 undirected graph induced by 
 the quasi-ordering is not connected.
A non-trivial initial segment may have
downward branching.

We characterize a weakly semirecursive sets and semi-r.e.\@ sets by
initial segments of quasi-orderings.
\begin{theorem}\label{thm:qo} A set
 $M$ is weakly semirecursive if and only if $M$ is an initial segment of an
      r.e.\@ quasi-ordering.
\end{theorem}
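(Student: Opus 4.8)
The plan is to prove the two implications separately, translating directly between a selector function and an r.e.\ quasi-ordering. For the backward direction, suppose $M$ is an initial segment of an r.e.\ quasi-ordering $\preceq$ (Definition~\ref{def:initial}); I would build a selector $\psi$ by dovetailing an enumeration of the r.e.\ relation $\{(x,y) : x\preceq y\}$. On input $(x,y)$ the procedure searches for a stage at which either $x\preceq y$ or $y\preceq x$ is enumerated, outputting $x$ if $x\preceq y$ is seen first and $y$ if $y\preceq x$ is seen first. When exactly one of $x,y$ lies in $M$, say $a\in M$ and $b\notin M$, the definition of initial segment gives $a\prec b$, so $a\preceq b$ holds while $b\preceq a$ fails; hence the search halts and returns $a$, the unique element of $\{x,y\}\cap M$. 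Thus $\psi$ witnesses condition~\eqref{semirec}, while the non-separating cases are irrelevant because the premise of~\eqref{semirec} is then false. (In particular $\preceq$-equivalent elements can never be separated by an initial segment, so the ambiguity of the search in that case does no harm.)

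For the forward direction, suppose $M$ is weakly semirecursive with partial recursive selector $\psi$. I would set $R:=\{(x,y) : \psi(x,y)\!\downarrow\,=x\}\cup\{(x,x):x\in\omega\}$ and let $\preceq$ be its reflexive transitive closure; since $R$ is r.e.\ and the transitive closure of an r.e.\ relation is again r.e., $\preceq$ is an r.e.\ quasi-ordering. The key lemma is that $\overline{M}$ is closed upward under $R$: if $x\notin M$ and $(x,y)\in R$, then $y\notin M$, for otherwise exactly one of $x,y$ would lie in $M$ and~\eqref{semirec} would force $\psi(x,y)=y\neq x$. Now for $a\in M$ and $b\notin M$, condition~\eqref{semirec} gives $\psi(a,b)=a$, so $(a,b)\in R$ and hence $a\preceq b$; conversely any $\preceq$-chain issuing from $b$ stays in $\overline{M}$ by the lemma and so cannot reach $a$, giving $\neg(b\preceq a)$. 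Therefore $a\prec b$ for all $a\in M$ and $b\notin M$, i.e.\ $M$ is an initial segment of $\preceq$.

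The delicate point is the forward direction: passing from the basic relation $R$ to its transitive closure could in principle manufacture a chain $b=z_0\,R\,z_1\,R\cdots R\,z_k=a$ witnessing $b\preceq a$ with $b\notin M$ and $a\in M$, which would destroy the initial-segment property. The upward-closure lemma for $\overline{M}$ is exactly what rules this out, since it propagates along any $R$-chain, so a chain beginning outside $M$ remains outside $M$ at every step. I expect verifying this lemma and its propagation along chains---together with checking that $\preceq$ is genuinely reflexive and transitive on all of $\omega$---to be the only substantive work; the remaining bookkeeping (r.e.-ness of the transitive closure and the harmlessness of $\psi$ being undefined in the non-separating cases) is routine.
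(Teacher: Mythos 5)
Your proof is correct, but only your backward direction coincides with the paper's argument. There the paper defines a selector by cases ($\psi(x,y)=x$ if $x\le y$ and $x\ne y$, etc.); your dovetailed search through the enumeration of the r.e.\ relation is the same idea made slightly more careful, since it resolves the ambiguity when $x$ and $y$ are distinct but $\preceq$-equivalent (a case the paper's case-split glosses over, harmlessly, because the hypothesis of \eqref{semirec} is then false). The real divergence is in the forward direction: the paper simply cites Theorem~4.1 of Kummer--Stephan \cite{MR1215090}, whereas you give a self-contained construction, taking the reflexive transitive closure of the r.e.\ relation $R=\{(x,y):\psi(x,y)\mathord{\downarrow}=x\}$ and proving the key lemma that $\overline{M}$ is closed upward under $R$, which propagates along chains and blocks any spurious $b\preceq a$ with $b\notin M$, $a\in M$. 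I checked this lemma and the chain argument: if $x\notin M$, $(x,y)\in R$ and $y\in M$, then $x\ne y$ and \eqref{semirec} forces $\psi(x,y)=y\ne x$, contradicting $(x,y)\in R$; and $\psi(a,b)=a$ for $a\in M$, $b\notin M$ gives $a\preceq b$. So your argument is sound and has the merit of making the appendix self-contained, at the cost of reproving (in essence) the cited result; the paper's citation is shorter but leaves the reader to consult \cite{MR1215090}.
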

\begin{proof}
 ($\Rightarrow$) By
 \cite[Theorem~4.1]{MR1215090}. ($\Leftarrow$) Let the witnessing quasi-ordering be
      $\le $. Put
\begin{equation}
 \psi(x,y):=\left\{ \begin{array}{ll}
	     x, \qquad&(x\le  y\ \mbox{and}\ x\ne y);\\
	     y, &(y\le  x\ \mbox{and}\ x\ne y);\\
		     \uparrow, &\mbox{otherwise}.
\end{array}
	    \right.\label{newpsi}
\end{equation}
Then $\psi$ is clearly a partial recursive function. Assume $x\in M\not\ni
      y$. Because $M$ is an initial segment of $\le $ in a sense of Definition~\ref{def:initial}, we have
      $x\le  y$ and $x\ne y$. By the definition of $\psi$, we
      have $\psi(x,y)=x$. On the other hand, assume $x\not\in M\ni
      y$. Then $y\le  x$ and $x\ne y$. To sum up, $\psi(x,y)\in
      \{x,y\}\cap M$. Thus $M$ is a weakly semirecursive set with
      $\psi$ being a selector function.\qed
\end{proof} 

We can prove a similar result for semi-r.e.\@ sets.

\begin{theorem} A set $M$ is semi-r.e.\@ if and only if $M$ is a linearly
 ordered initial segment of an r.e.\@ quasi-ordering. 
\end{theorem}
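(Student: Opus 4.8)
The plan is to mirror the proof of Theorem~\ref{thm:qo}, strengthening both implications so that the witnessing initial segment $M$ becomes \emph{linearly} ordered. For the ($\Leftarrow$) direction, suppose $M$ is a linearly ordered initial segment of an r.e.\@ quasi-ordering $\le$. I would reuse the selector of \eqref{newpsi}, now computed by dovetailing an enumeration of $\le$: on input $(x,y)$ with $x\ne y$, search for the first of the two facts $x\le y$ and $y\le x$ to appear in the enumeration and output $x$ or $y$ accordingly, outputting $x$ when $x=y$. This $\psi$ is partial recursive. One then checks the semi-r.e.\@ condition by cases on membership in $M$. When exactly one of $x,y$ lies in $M$, Definition~\ref{def:initial} forces the element of $M$ to be strictly below the other, so only the correct comparison is ever enumerated and $\psi$ returns that element, exactly as in Theorem~\ref{thm:qo}. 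The genuinely new case is $x,y\in M$ with $x\ne y$: here the hypothesis that $M$ is \emph{linearly} ordered guarantees $x\le y$ or $y\le x$, so the search halts and returns an element of $\{x,y\}\subseteq M$. Hence $\psi(x,y)\in\{x,y\}\cap M$ whenever $x\in M\vee y\in M$, which is exactly the semi-r.e.\@ condition.

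For the ($\Rightarrow$) direction, suppose $M$ is semi-r.e.\@ with selector $\psi$. I would build the quasi-ordering directly from $\psi$. Put
\[
r:=\{(a,b)\;;\; \psi(a,b)=a\}\ \cup\ \{(a,b)\;;\; \psi(b,a)=a\},
\]
where each equality is read as convergence to that value, and let $\le$ be the reflexive--transitive closure of $r$; this is again r.e.\@ because $r$ is r.e.\@ and transitive closure preserves r.e.-ness. That $M$ is linearly ordered is immediate: for distinct $x,y\in M$ the selector $\psi(x,y)$ is defined and lands in $\{x,y\}$, and whichever value it takes places one of $(x,y),(y,x)$ into $r$. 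One half of the initial-segment property is also easy: if $x\in M$ and $y\notin M$, the semi-r.e.\@ condition forces $\psi(x,y)=x$, so $(x,y)\in r$ and $x\le y$.

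The hard part---and the main obstacle---is to show that the transitive closure does not accidentally create $y\le x$ for some $x\in M$, $y\notin M$, which would destroy the strictness demanded by Definition~\ref{def:initial}. Here I would argue at the boundary: given a putative chain $y=z_0,\ldots,z_k=x$ witnessing $y\le x$ through $r$, pick the largest index $j$ with $z_j\notin M$, so that $z_{j+1}\in M$ and $z_j\ne z_{j+1}$. Since $z_{j+1}\in M$, the semi-r.e.\@ condition forces both $\psi(z_j,z_{j+1})$ and $\psi(z_{j+1},z_j)$ to equal $z_{j+1}$; but then neither clause defining $r$ can hold for the pair $(z_j,z_{j+1})$, contradicting $(z_j,z_{j+1})\in r$. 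Hence no such chain exists, $y\not\le x$, and $M$ is a genuine strict initial segment. Assembling the two directions yields the equivalence.

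As an alternative to the direct construction one could note that a semi-r.e.\@ set is in particular weakly semirecursive, invoke Theorem~\ref{thm:qo} to get an r.e.\@ quasi-ordering making $M$ an initial segment, and then adjoin comparabilities read off from $\psi$ to linearize $M$; but this adjunction is essentially the construction above, so the direct route is cleaner. I expect the only delicate points to be the boundary case analysis just described and the routine verifications that dovetailing keeps $\psi$ partial recursive and that each defining relation, together with its transitive closure, remains r.e.
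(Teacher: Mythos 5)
Your proof is correct, and it splits naturally against the paper's. The ($\Leftarrow$) direction is essentially the paper's own argument: the paper reuses the selector of \eqref{newpsi} and splits into the cases $y\in M$ (where linearity of $M$ under $\le$ supplies a comparison, so the search halts inside $M$) and $y\notin M$ (where the initial-segment property forces $x\le y$, $x\ne y$, hence $\psi(x,y)=x$). Your dovetailed implementation and your explicit convention $\psi(x,x)=x$ are only more careful than the paper --- in fact \eqref{newpsi} diverges on the diagonal while the semi-r.e.\@ condition is triggered when $x=y\in M$, so your patch closes a small gap in the paper's case analysis. The ($\Rightarrow$) direction is where you genuinely diverge: the paper disposes of it by citing \cite[Theorem~5.1]{MR1215090}, whereas you construct the r.e.\@ quasi-ordering from scratch as the reflexive--transitive closure of the relation $r$ read off from the selector. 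Your construction checks out: linearity on $M$ and the inclusion $x\le y$ for $x\in M$, $y\notin M$ are immediate from the semi-r.e.\@ condition, and your boundary argument is the right way to kill $y\le x$ --- any $r$-chain from $y\notin M$ to $x\in M$ must contain a step $(z_j,z_{j+1})$ with $z_j\notin M\ni z_{j+1}$, and for such a pair the semi-r.e.\@ condition forces both $\psi(z_j,z_{j+1})$ and $\psi(z_{j+1},z_j)$ to converge to $z_{j+1}$, so neither clause defining $r$ can put $(z_j,z_{j+1})$ into $r$. What you buy is a self-contained proof whose only input is the definition of a selector function; what the citation buys is brevity and consistency with the paper's treatment of Theorem~\ref{thm:qo}, whose forward direction is likewise outsourced to Kummer--Stephan.
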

\begin{proof}
 Only if-part is by
      \cite[Theorem~5.1]{MR1215090}. To prove the converse, assume $x\in
      M$ without loss of generality. When $y\in M$, we have $x\le  y$ or $y\le  x$
      because $M$ is linearly ordered. By \eqref{newpsi}, we have
      $\psi(x,y)\in \{x,y\}\cap M$. When $y\not\in M$, $x\le  y$
      and $x\ne y$ because $M$ is an initial segment of
      $\le$. By \eqref{newpsi}, we have $\psi(x,y)=x$.\qed
\end{proof}

A lemma similar to ``If $A\le_p B$ and $B$ is semirecursive, then $A$ is
semirecursive''~\cite[Theorem 4.2]{MR0220595} holds for semi-r.e. sets
and weakly semirecursive sets.

\begin{lemma}\label{lem:positive_red}If $A\le_p M$ and $M$ is semi-r.e.~(weakly
 semirecursive, resp.), then so is $A$.\end{lemma}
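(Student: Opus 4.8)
Recall that $A\le_p M$ via a recursive $f$ unwinds to
\begin{displaymath}
x\in A\iff \exists y\in E_{f(x)}\ \forall i\in E_y\ (i\in M),
\end{displaymath}
so membership in $A$ is a monotone ``DNF'' in the truth assignment ``$i\mapsto (i\in M)$''; in particular the reduction is monotone in $M$, i.e.\@ a deformation in the sense of Lemma~\ref{lem:11}. Two routes are available: mimic Jockusch's proof of \cite[Theorem~4.2]{MR0220595} by manufacturing a (partial) selector for $A$ out of one for $M$, or---more transparently---invoke the characterizations just proved (Theorem~\ref{thm:qo} and its semi-r.e.\@ analogue) and exhibit $A$ directly as an initial segment of an r.e.\@ quasi-ordering. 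The plan is to take the latter route, since it quarantines the computability bookkeeping into a single r.e.\@ relation.

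For the \emph{weakly semirecursive} case I would argue as follows. By Theorem~\ref{thm:qo} fix an r.e.\@ quasi-ordering $\le$ on $\omega$ having $M$ as an initial segment; note first that $M$ is then a $\le$-down-set, for if $a\le b\in M$ and $a\notin M$ then Definition~\ref{def:initial} forces $b\prec a$, contradicting $a\le b$. Now define on $\omega$ the clause-domination preorder
\begin{displaymath}
a\preceq_A b\iff \forall y'\in E_{f(b)}\ \exists y\in E_{f(a)}\ \forall i\in E_y\ \exists j\in E_{y'}:\ i\le j .
\end{displaymath}
This relation is r.e.: the index sets $E_{f(\cdot)}$ and the clauses $E_y$ are \emph{decidable} finite sets, and every quantifier ranges over such a set, so $\preceq_A$ is a finite Boolean combination of the $\Sigma^0_1$ relation $\le$ and hence $\Sigma^0_1$. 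Reflexivity (take $y=y'$, $j=i$) and transitivity (chase the witnesses through $b$) are routine. The decisive point is that $A$ is an initial segment of $\preceq_A$: if $a\in A$ and $b\notin A$, pick a good clause $E_{y_0}\subseteq M$ of $a$; each clause $E_{y'}$ of $b$ contains some $q\notin M$, and every $i\in E_{y_0}\subseteq M$ then satisfies $i\le q$ because $i\in M,\ q\notin M$ forces $i\prec q$; this gives $a\preceq_A b$, while $b\preceq_A a$ would, applied to $y'=y_0$, put an out-of-$M$ element of a clause of $b$ below an element of $E_{y_0}\subseteq M$, contradicting the down-set property. Thus $a\prec_A b$, and Theorem~\ref{thm:qo} yields that $A$ is weakly semirecursive.

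For the \emph{semi-r.e.} case the analogous characterization demands that $A$ be a \emph{linearly ordered} initial segment, and here $M$ itself is a chain. The order $\preceq_A$ above need not make $A$ a chain (two satisfied-but-incomparable clause families give incomparable $\preceq_A$-elements), so I would instead pass to representatives: for each clause $C$ search, using the r.e.\@ relation $\le$, for a $\le$-maximum of $C$. A good clause $C\subseteq M$ is a subchain of $M$, hence has a maximum, which lies in $M$; and any such in-$M$ representative sits $\le$-below every out-of-$M$ representative. Consequently, when $x\in A$ the representatives attached to $x$'s clauses possess a \emph{unique} $\le$-minimum, and that minimum is in $M$. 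The selector $\psi_A$ then compares the representatives coming from $a$ and from $b$ via the partial selector $\psi_M$ of $M$ and returns whichever of $a,b$ owns the resulting in-$M$ representative, exactly paralleling Jockusch's folding of the disjunction.

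The hard part will be the semi-r.e.\@ case, specifically the partiality of $\psi_M$ together with the fact that membership in a semi-r.e.\@ set is \emph{not} semidecidable: $\psi_M(x,y)$ is entirely unconstrained when neither argument lies in $M$ (cf.\@ \eqref{semirec}), so a halting value never \emph{certifies} that a representative is in $M$, and the maximum-search may diverge on ``bad'' clauses. I would resolve this by dovetailing the $\le$-enumeration, the representative searches, and the $\psi_M$-computations, and then proving termination-with-correctness: when at least one of $a,b$ is in $A$ the corresponding good representative is eventually produced, a premature out-of-$M$ minimum is invalidated as soon as that good representative appears (it is $\le$-below the spurious one but not conversely), and correctness of whatever is output follows from the defining selector property of $\psi_M$. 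Everything outside this dovetailing/termination argument is routine, and the weakly semirecursive clause-domination construction can be reused verbatim to supply the underlying r.e.\@ comparisons.
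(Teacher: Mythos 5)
Your proposal splits into two halves of very different status. The weakly semirecursive half is correct and is a genuinely different argument from the paper's: the paper first converts the positive reduction into a many-one reduction $g$ with $x\in A\iff g(x)\in M$ (quoting \cite[Theorem~4.2~(ii)]{MR0220595}) and then pulls the selector back, defining $\psi'(x,y)$ by cases on whether $\psi(g(x),g(y))$ equals $g(x)$ or $g(y)$ as in \eqref{newselector}; that single construction disposes of both the semi-r.e.\ and the weakly semirecursive case in a few lines. Your clause-domination preorder $\preceq_A$ instead works entirely on the order-theoretic side of Theorem~\ref{thm:qo}, and the details check out: $\preceq_A$ is r.e.\ (all quantifiers are bounded over the decidable finite sets $E_{f(\cdot)}$ and $E_y$), it is reflexive and transitive, the down-set observation is right, and the verification that $a\in A$, $b\notin A$ imply $a\prec_A b$ is exactly what Definition~\ref{def:initial} asks for. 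So this half stands on its own, and it has the incidental merit of not routing through the conversion of $\le_p$ to $\le_m$.

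The semi-r.e.\ half, however, is a genuine gap, and you say as much yourself. What you give is a plan, not a proof, and the plan's central difficulty is left unresolved: a selector is a fixed partial recursive function whose output, once produced, cannot be retracted, whereas your procedure may commit on the strength of a ``premature out-of-$M$ minimum'' coming from a spurious maximum-search on a bad clause; ``invalidated as soon as the good representative appears'' is not an operation such a function can perform. On top of that, $\psi_M$ is unconstrained when neither argument is in $M$, membership in a semi-r.e.\ set is not semidecidable, and the maximum-searches may diverge, so the deferred ``termination-with-correctness'' argument is precisely where the proof would have to live, and nothing you wrote shows it can be completed. The economical repair is to take the route you declined: obtain the many-one reduction $g$ and set $\psi'(x,y)=x$ if $\psi_M(g(x),g(y))=g(x)$, $\psi'(x,y)=y$ if $\psi_M(g(x),g(y))=g(y)$, undefined otherwise; the case analysis for semi-r.e.\ sets is then immediate, which is exactly the paper's proof.
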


\begin{proof}
Let $\psi$ be a selector function of $M$. Because $A\le_p M$, the set $A$ is many-one reducible to $M$,  by
 \cite[Theorem~4.2~(ii)]{MR0220595}. So there exists a recursive function $g$ such that 
\begin{equation}
x\in A\iff
 g(x)\in M. \label{reduction}
\end{equation}
Define a partial recursive function  $\psi'$ by
\begin{equation}
\psi'(x,y)=\left\{\begin{array}{ll}
	   x,\qquad &(\psi(g(x),\, g(y)) = g(x));\\
	    y, &(\psi(g(x),\, g(y)) = g(y));\\
		   \uparrow, & (\mbox{otherwise}).
\end{array}\right. \label{newselector}
\end{equation}
(i) Assume $M$ is weakly semirecursive. Suppose $x\in A\not\ni y$ without
 loss of generality. By \eqref{reduction},
 $g(x)\in M\not\ni g(y)$.  Thus $\psi(g(x), g(y))\in \{ g(x), g(y)\}\cap
 M$. By $g(y)\not\in M$, we have $\psi(g(x),\, g(y))=g(x)\in M$. Hence
 $\psi'(x,\, y)=x\in \{x,y\}\cap A$. Therefore $A$ is weakly semirecursive
 with a selector function $\psi'$.

(ii) Assume $M$ is semi-r.e. Suppose $x\in A$ or $y\in A$. Then $g(x)\in M$
 or $g(y)\in M$. So $\psi(g(x),\, g(y))\in \{g(x),\, g(y)\}\cap M$. When
 $\psi(g(x),\, g(y))=g(x)$, by a similar argument of (i), we have
 $\psi'(x,y)\in \{x,y\}\cap A$. When $\psi(g(x),\, g(y))=g(y)$,
 $\psi'(x,y)=y\in\{x,y\}\cap A$. Thus $A$ is semi-r.e.\@ with a selector
 function $\psi'$.\qed
\end{proof}

\begin{corollary}\label{cor:image}Let $R\subseteq \omega \times\inhfin{\omega }$ be a recursive relation. Then  if $A$ is semirecursive (semi-r.e., weakly semirecursive resp.),
then so is $B\subseteq \omega $ where $B=\op_R(A)$.\end{corollary}

\section{A new order type of a set system}\label{sec:preliminary}

\begin{definition}\label{def:dimfess}
A \emph{learning sequence} of a set system $\L\subseteq P(T)$ is, by
definition, a possibly infinite sequence $$\left\langle \langle t_0, A_1\rangle, \langle t_1,
A_2\rangle ,\ldots\Bigl(, \langle t_n,
A_{n+1}\rangle\Bigr) \right\rangle$$ such that for each $i<n$
$\{t_0,\ldots,t_i\}\subseteq A_{i+1}\in \L$. In
particular, we call the sequence \emph{bad} if $A_{i+1}\not\ni t_{i+1} $
for each $i$.

We say a set system $\L\subseteq P(T)$ has \emph{infinite elasticity},
provided that 
there are infinite bad learning sequences. Otherwise, we say $\L$ has an
\emph{\textsc{fe}}, and call $\L$ an \textsc{fess}.

Let $T$ be a well-founded tree. For each node $\sigma$ of $T$, let the
ordinal number $| \sigma |$ be the supremum of $|\sigma'| + 1$ such that
$\sigma'\in T$ is an immediate extension of $\sigma$. Then the
\emph{order type} $|T|$ of the well-founded tree $T$ is defined by the
ordinal number $|\emptyseq|$ assigned to the root $\emptyseq$ of $T$.
For a tree $T$ which is not well-founded, let $|T|$ be $\infty$. 

The order type of $\L$, denoted by $\dim \L$, is, by definition, the
 order type of the tree of bad learning sequences of $\L$. 
\end{definition}

In the premise of Proposition~\ref{prop:u},  we cannot replace the domain of the
continuous function $\op:\{0,1\}^{\bigcup\M}\to \L$ with a set system $\M$. We have 
following counterexample: $\M=\{\;\{i\}\;;\;i\in\omega\}$ is a discrete subspace of
the product topology $\{0,1\}^\omega$ and hence any function from the
relative topology $\M$ to a set system $\L$ is continuous even if $\L$ is
not an \textsc{fess}.

\end{document}